\newtheorem{thm}{Theorem}[section] 
\newtheorem{lemma}[thm]{Lemma}     
\newtheorem{cor}[thm]{Corollary}
\newtheorem{prp}[thm]{Proposition}
\newcommand{\tr}{{\rm tr}}
\newcommand{\mod}{{\rm mod}}
\title[Non-linear Group Actions]
{Non-linear Group Actions with Polynomial Invariant Rings and a
Structure Theorem for Modular Galois Extensions} 
\author{Peter Fleischmann and Chris Woodcock}
\begin{document}
\maketitle

\begin{abstract}
Let $G$ be a finite $p$-group and $k$ a field of characteristic $p>0$. We show that
$G$ has a \emph{non-linear} faithful action on a polynomial ring $U$ of dimension $n=\mathrm{log}_p(|G|)$ such that the invariant ring $U^G$ is also polynomial.
This contrasts with the case of \emph{linear and graded} group actions with polynomial rings of invariants,
where the classical theorem of
Chevalley-Shephard-Todd and Serre requires $G$ to be generated by pseudo-reflections.
\\
Our result is part of a general theory of ``trace surjective $G$-algebras", which, in
the case of $p$-groups, coincide with the Galois ring-extensions in the sense of \cite{chr}.
We consider the \emph{dehomogenized symmetric algebra} $D_k$, a polynomial ring with non-linear $G$-action, containing $U$ as a retract and we show that $D_k^G$ is a polynomial ring.
Thus $U$ turns out to be \emph{universal} in the sense that
every trace surjective $G$-algebra can be constructed from $U$ by ``forming
quotients and extending invariants". As a consequence we obtain a general structure theorem for Galois-extensions with given $p$-group as Galois group and any prescribed
commutative $k$-algebra $R$ as invariant ring. This is a generalization of the Artin-Schreier-Witt theory of modular Galois field extensions of degree $p^s$.
\end{abstract}


\section{Introduction} \label{Intro}

Let $k$ be a field and let $G$ be a finite group acting \emph{linearly} on a polynomial ring
$A:=k[X_1,\cdots,X_n]$. Due to a classical theorem of Chevalley-Shephard-Todd and Serre it is known
that if the invariant ring $A^G$ is a polynomial ring, then $G$ is generated by pseudo-reflections.
The converse is also true if $|G|\in k^*$, but is known to fail if ${\rm char}(k)\ |\ |G|$.
The assumption on the group action to be \emph{linear} in degree one and degree preserving on $A$
is essential for that result. In contrast to this we prove that if ${\rm char}(k)=p>0$, then
\emph{every finite $p$-group} $G$ has a \emph{non-linear} faithful action on a polynomial ring $U$ of Krull dimension $n=\mathrm{log}_p(|G|)$, such that the ring of invariants $U^G$ is also a polynomial ring. This is part of a more general theory of group actions on
noetherian, not necessarily artinian, $k$-algebras $A$ with surjective trace function
$$\tr:\ A\to A^G,\ a\mapsto \sum_{g\in G} ag.$$
This condition is equivalent to $A$ being a projective $kG$-module. In this paper
we begin the systematic study (mainly in the commutative and non-artinian case) of
these \emph{trace-surjective} $G$\emph{-algebras}. They have a
beautiful structure theory and they arise naturally in various contexts,
such as modular invariant theory, linear algebraic groups, Galois theory, ramification theory of
commutative rings and in the cohomology of finite groups.
\\
In invariant theory for example, one usually considers the symmetric algebra
$A = \mathrm{Sym}_k(W)$, where $W$ is a faithful finitely-generated
$kG$-module. Then $G$ acts on the graded $k$-algebra $A$ and the ring of
invariants $A^G$ is finitely generated. If the characteristic $\mathrm{char}(k)$ of $k$
is coprime to $|G|$, then $A^G$ can be generated by invariants of
degree at most $|G|$ (see \cite{N1} for $\mathrm{char}(k) = 0$ and
\cite{pf_adv} or \cite{Fog} for $\mathrm{char}(k) = p > 0$). If $p$ divides $|G|$, however,
there is no such bound that depends only
on the size of $G$ (see e.g.\cite{Derksen:Kemper} and \cite{symonds_bound}).
However, there exists a non-zero homogeneous transfer element $c = {\rm tr}(f)\in A^G$
with $f\in A$ of degree less than $|G|$ (see \cite{homloc}).
Let $A_c:=A[\frac{1}{c}]$, the localization of $A$ at $c$. Then it is
shown in \cite{homloc} that the invariant ring $A_c^G$ always
satisfies a ``monomial degree bound'' close to the Noether bound. In fact,
$A_c$ is a trace-surjective $G$-algebra and, if $G$ is a $p$-group
and $k$ has characteristic $p$, then this implies that $A_c$ is a free
$A_c^G[G]$-module of rank $1$ with basis $\frac{f}{c}$ (see Theorem \ref{first_main}).\\
Localizations like these do arise naturally in the theory of linear algebraic groups:
Assume for the moment that $k$ is algebraically closed and that $G\le{\rm SL}_n(k)$ is
a finite subgroup. The left multiplication action of
$G$ on ${\rm Mat}_n(k)$ induces a homogeneous right regular action on the coordinate ring
$k[M]:=k[{\rm Mat}_n(k)]\cong k[X_{ij}\ |\ 1\le i,j\le n]$ with ${\rm det}:={\rm det}(X_{ij})\in k[M]^G$.
It can be shown that ${\rm det}\in\sqrt{\tr(k[M])}$, i.e. $\tr(f)=({\rm det})^N$ for some $N\in \mathbb{N}$
and some $f\in k[M]$.
It follows that the coordinate ring $k[{\rm GL}_n]=k[M][1/{\rm det}]$ is a
trace-surjective $G$-algebra. Since epimorphic images of trace-surjective algebras are
again trace-surjective (see Theorem \ref{first_main} (iii) in the case of $p$-groups),
a similar conclusion holds, if
${\rm GL}_n$ is replaced by an arbitrary closed linear algebraic subgroup containing $G$ (see
Corollary \ref{alg_grps}, where this is proved in a different way). As a consequence we obtain a plethora of
examples of trace-surjective algebras, arising in the theory of algebraic groups.
We are indebted to Stephen Donkin and Bram Broer for bringing these to our attention.
\\
Another one of these localizations, which is particularly important for the theory to come,
is the ring of Laurent-polynomials $D_k[t,1/t]$, where $D_k$ is the \emph{dehomogenized symmetric algebra}
in the sense of Bruns-Herzog (\cite{BH} pg.38). This means that
$D_k = \mathrm{Sym}_k(kG)/(\alpha)$, with $\alpha=-1+\sum_{g\in G} X_g$
where $kG=\oplus_{g\in G}kX_g$.
\\
Now let $A$ be a commutative ring and let $G\le {\rm Aut}(A)$ be a finite subgroup with
ring of invariants $R:=A^G$. In the classical paper \cite{chr}, a notion of
Galois extensions of commutative rings is defined and it is shown
how the main results of the Galois theory of fields, in particular the correspondence
between subgroups $H\le G$ and intermediate Galois-extensions $A^H\le A$,
can be generalized to commutative rings, provided the $G$-action
satisfies some natural axioms. These axioms hold for example in the
``unramified step" $(A_{\rm q})^I\le A_{\rm Q}$ of a general ring extension,
where ${\rm Q}\in {\rm Spec}(A)$ with inertia group
$I:=\{g\in G\ |\ ag-a\in {\rm Q}\}\unlhd {\rm Stab}_G({\rm Q})$
and ${\rm q}={\rm Q}\cap A^I$.
We will show in Section \ref{ts_alg}, that for $p$-groups and $k$-algebras of
characteristic $p$, the concepts of Galois-extensions and trace-surjective
algebras are equivalent (see Corollary \ref{G_p_gr_ts_iff_gal}).
\\
In the cohomology of $p$-groups, a significant appearance of trace-surjective algebras has
recently been observed by one of the authors (\cite{cfw3}), motivating some
of the investigations in this paper.
\\
The following are the main results of this paper:

\begin{thm}\label{arb_p_grp_intro}
Let $G$ be an arbitrary finite $p$-group of order $p^n$ and $k$ a field of characteristic
$p>0$. Then the following hold:
\begin{enumerate}
\item There is a trace-surjective $G$-subalgebra $U\le D_k$,
such that $U$ is a polynomial ring of Krull-dimension $n$ and a retract of
$D_k$, i.e. $D_k=U\oplus I$ with a $G$-stable \emph{ideal} $I\unlhd D_k$ (see
Theorem \ref{arb_p_grp}).
\item The algebra $U$ can be constructed in such a way that $U=k[Y_0,\cdots,Y_{n-1}]$ with ``triangular"
$G$-action of the form $(Y_i)g=Y_i+f_i(Y_{i+1},\cdots,Y_{n-1})$ for $i=0,\cdots,n-1$ (see
Remark \ref{UG_is_poly_rem} (iii)).
\item The ring of invariants $U^G$ is also a polynomial ring of Krull-dimension $n$.
\end{enumerate}
\end{thm}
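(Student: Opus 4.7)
The plan is to construct the generators $Y_{n-1},Y_{n-2},\ldots,Y_0\in D_k$ iteratively, following a central chief series $1=N_0\lhd N_1\lhd\cdots\lhd N_n=G$ of the $p$-group $G$ whose factors $N_{j+1}/N_j$ are cyclic of order $p$ and central in $G/N_j$. At step $i$ (for $i$ decreasing from $n-1$ to $0$), the element $Y_i$ is built as an Artin--Schreier coboundary for the next $\mathbb{Z}/p$-layer in the series, chosen so that its $G$-transformation rule has the prescribed triangular form $(Y_i)g=Y_i+f_{i,g}(Y_{i+1},\ldots,Y_{n-1})$.

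For the first step $i=n-1$, the quotient $G/N_{n-1}\cong\mathbb{Z}/p$ provides a homomorphism $\chi\colon G\to\mathbb{F}_p\subset k$ with kernel $N_{n-1}$. Pick a $Y\in D_k$ with $\tr(Y)=1$ (which exists because $D_k$ is trace-surjective) and a coset representative $g_1$ with $\chi(g_1)=1$; setting $Y_{n-1}:=\sum_{j=0}^{p-1}j\cdot\sum_{h\in N_{n-1}}(Y)g_1^j h$ and using the relation $\tr(Y)=1$ in $D_k$ gives $(Y_{n-1})g=Y_{n-1}-\chi(g)$ for all $g\in G$, by a direct calculation modelled on the base case $n=1$ (where the formula specialises to $Y_0=\sum_j jX_{g^j}$, yielding $(Y_0)g=Y_0-1$ and $U^G=k[Y_0^p-Y_0]$ by Artin--Schreier).

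Inductively, having constructed $Y_{n-1},\ldots,Y_{i+1}$, the new $Y_i$ must solve a one-cocycle equation $f_{i,gh}=f_{i,h}+(f_{i,g})h$ with values in the already-built $G$-algebra $k[Y_{i+1},\ldots,Y_{n-1}]$, whose prescribed class is supplied by the next layer of the central series. Existence of a coboundary $Y_i\in D_k$ follows again from trace-surjectivity (Theorem \ref{first_main}) applied to the relevant subquotient. The leading cocycles induce linearly independent surjections $G\to\mathbb{Z}/p$, forcing $U:=k[Y_0,\ldots,Y_{n-1}]$ to be a polynomial ring of Krull dimension $n$. The retract property $D_k=U\oplus I$ for a $G$-stable ideal $I$ is obtained by extending $Y_0,\ldots,Y_{n-1}$ to a $G$-equivariant polynomial coordinate system on $D_k$, using the free $D_k^G[G]$-module structure granted by Theorem \ref{first_main}. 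Part (iii) then follows by iterated Artin--Schreier along the filtration: with $w_i:=Y_i^p-Y_i+(\text{correction in }Y_{i+1},\ldots,Y_{n-1})$ chosen to absorb the contribution of the $f_{i,g}$, a descending induction on $i$ yields $k[Y_i,\ldots,Y_{n-1}]^{G}=k[w_i,\ldots,w_{n-1}]$ and hence $U^G=k[w_0,\ldots,w_{n-1}]$.

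The main obstacles I anticipate are (a)~choosing the $Y_i$ compatibly at different levels so that the cocycle equations fit together into the exact triangular form stated, rather than merely up to higher-order corrections; (b)~establishing the retract property $D_k=U\oplus I$ uniformly and $G$-equivariantly through the inductive construction; and (c)~identifying the correction terms in the $w_i$ so that both algebraic independence and generation of $U^G$ are guaranteed. Obstacle (a), namely ensuring that the Artin--Schreier coboundaries at successive levels can be selected inside the single algebra $D_k$ with the prescribed triangular interdependence, is the technical heart of the argument.
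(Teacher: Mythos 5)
Your outline follows the same overall strategy as the paper's proof: induct along a central series of $G$ with $C_p$-layers, adjoin at each layer an Artin--Schreier-type variable with triangular transformation law, and obtain the invariants $\sigma_i=Y_i-Y_i^p+\gamma_i$ by solving coboundary equations, using the cohomological triviality that trace-surjectivity provides (Theorem \ref{first_main}). Your explicit formula for $Y_{n-1}$ is correct, and your use of $H^1(G,D_k)=0$ (freeness of $D_k$ as a $kG$-module) to realize each $Y_i$ inside $D_k$ is sound. However, the two points you defer as obstacles (a) and (b) are not refinements to be supplied later: they are exactly where the paper's proof does its real work, and as it stands the proposal does not establish parts (i)--(iii).

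Concretely, there are three gaps. First, the inductive step needs a $1$-cocycle $f_i$ of $G$ with values in the \emph{already-built subalgebra} $k[Y_{i+1},\cdots,Y_{n-1}]$ whose restriction to the central layer is the nonzero constant cocycle; saying its class is ``supplied by the next layer of the central series'' is not an argument, since that layer gives a class in $H^2$ of the quotient with $\mathbb{F}_p$-coefficients, not a $1$-cocycle valued in the subalgebra. The paper manufactures this cocycle explicitly: it sets $y_0:=\sum_i i\, t_{\mathcal R}(x_{g_0^i})$, computes $y_0h=y_0-\sum_{r}e_{r,h}\eta_{r_{r,h}}$ --- whose values lie only in $D_k(G/Z)$, \emph{not} in $U_{G/Z}$ --- and then pushes the values into $U_{G/Z}$ by iterating the equivariant endomorphism $\theta:x_g\mapsto wg$ attached to the point $w=-y_0^{p-1}w'$ (Lemma \ref{tryout}); an abstract alternative would be inflation--restriction together with $H^2(G/Z,U_{G/Z})=0$, but some such argument must be given. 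Second, the retract property $D_k=U\oplus I$ cannot be obtained by ``extending $Y_0,\cdots,Y_{n-1}$ to a $G$-equivariant coordinate system'': the free $D_k^G[G]$-module structure of Theorem \ref{first_main} is a module-theoretic decomposition and produces no $k$-algebra projection onto $U$. The paper's mechanism is the reflexive-point criterion: $U$ is a retract if and only if $U=\chi(D_k)$ for an idempotent $\chi\in({\rm End}_{k-{\rm alg}}(D_k))^G$, equivalently $U=k[w^G]$ for a point $w$ fixed by its own associated endomorphism (Lemma \ref{endo_krit_standard}); and the heart of Lemma \ref{tryout} is that, although $w=-y_0^{p-1}w'$ is generally \emph{not} reflexive, $\theta^p$ is idempotent, so $\tilde w=\theta^{p-1}(w)$ is reflexive and generates the desired standard subalgebra. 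Third, in part (iii) you assert $k[Y_i,\cdots,Y_{n-1}]^G=k[w_i,\cdots,w_{n-1}]$ but give no argument that the $w_i$ \emph{generate} the invariant ring; the paper closes this by a rank count, showing $U$ is free of rank $p^n$ both over $k[\sigma_0,\cdots,\sigma_{n-1}]$ and over $U^G\supseteq k[\sigma_0,\cdots,\sigma_{n-1}]$, which forces equality.
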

Let $\mathfrak{Ts}$ denote the category of all commutative trace-surjective
$G$-algebras, with morphisms being $G$-equivariant homomorphisms of $k$-algebras.
The algebra $U$ turns out to be a projective
\footnote{with respect to surjective functions rather than
epimorphisms} object in $\mathfrak{Ts}$, and its
Krull-dimension $\mathrm{log}_p(|G|)$ is minimal among
all finitely generated projective objects, at least if $k = \mathbb{F}_p$.
Finitely generated projective objects in $\mathfrak{Ts}$ are precisely the retracts
of the tensor powers of $D_k$. Since $D_k$ and its tensor powers are
polynomial algebras it is natural to ask whether or not every
finitely-generated projective algebra is a polynomial algebra
(see \cite{costa}, \cite{shpilrain} for related open questions/conjectures).
It would be interesting to determine all the projective objects in $\mathfrak{Ts}$
of minimal dimension. We will address these and related
categorical questions about $\mathfrak{Ts}$ in a separate paper.
\\
Let us now fix a polynomial retract $k[Y_0,Y_1,\cdots,Y_{n-1}]\cong U\le D_k$ with
polynomial ring of invariants $U^G=k[\sigma_0(\underline Y),\cdots,\sigma_{n-1}(\underline Y)]$ as described in Theorem \ref{arb_p_grp_intro} (see Theorem \ref{UG_is_poly} for more details).
Then we obtain the following

\begin{thm}[Structure theorem for $\mathfrak{Ts}$]\label{strct_thm_intro}
Let $|G|=p^n$, $k$ a field of characteristic $p>0$ and $R$ be a commutative $k$-algebra.
Then every algebra $A\in \mathfrak{Ts}$, with given ring of invariants $A^G=R$ is of the form
$$A\cong R[Y_0,\cdots,Y_{n-1}]/(\sigma_0(\underline Y)-r_0,\cdots,\sigma_{n-1}(\underline Y)-r_{n-1})$$
with suitable $r_0,\cdots,r_{n-1}\in R$, and $G$-action derived from the action on $U$.\\
The $\sigma_i$ can be determined explicitly and are of the form
$$\sigma_i(\underline Y)=Y_i-Y_i^p+\gamma_i(Y_{i+1},\cdots,Y_{n-1}).$$
\end{thm}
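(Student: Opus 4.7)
The plan is to exploit the universality of $U$ established earlier: every trace-surjective $G$-algebra is a specialisation of $U$ along a morphism out of $U^G$, and the theorem amounts to making this presentation completely explicit for each $A\in\mathfrak{Ts}$ with $A^G=R$. First, I would produce a $G$-equivariant $k$-algebra homomorphism $\varphi:U\to A$. Since $D_k=\mathrm{Sym}_k(kG)/(\alpha)$ is built from the regular representation, any $a\in A$ with $\tr(a)=1$ (available by trace-surjectivity) defines a $G$-equivariant $k$-algebra map $\mathrm{Sym}_k(kG)\to A$ by $X_g\mapsto ag$; this kills $\alpha=-1+\sum_g X_g$ and so factors through $D_k$, and restriction to the polynomial retract $U\subseteq D_k$ furnishes $\varphi$. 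Alternatively one invokes the projectivity of $U$ in $\mathfrak{Ts}$ established earlier. Setting $r_i:=\varphi(\sigma_i(\underline Y))\in A^G=R$, the map $\varphi$ annihilates each $\sigma_i(\underline Y)-r_i$ and thus descends to
$$\bar\varphi\;:\;B\;:=\;R\otimes_{U^G}U\;\cong\;R[Y_0,\ldots,Y_{n-1}]\big/\bigl(\sigma_0(\underline Y)-r_0,\ldots,\sigma_{n-1}(\underline Y)-r_{n-1}\bigr)\;\longrightarrow\;A.$$

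Since $U^G\hookrightarrow U$ is a $G$-Galois extension of polynomial rings of equal Krull dimension $n$, $U$ is finite free of rank $p^n$ over $U^G$, and base change shows $B\in\mathfrak{Ts}$ with $B^G=R$ and $B/R$ Galois in the sense of \cite{chr}. The crux is to show $\bar\varphi$ is an isomorphism. For surjectivity, let $u\in U$ be a trace-$1$ generator of $U$ as a $U^G[G]$-module (Theorem \ref{first_main}); then $1\otimes u$ is a trace-$1$ generator of $B$ over $R[G]$, so $\bar\varphi(1\otimes u)\in A$ has trace $1$. Writing this image as $\alpha\cdot a_0$ with $a_0$ an $R[G]$-basis of $A$ (again Theorem \ref{first_main}) and $\alpha\in R[G]$, the trace-$1$ condition forces $\alpha\equiv 1$ modulo the augmentation ideal $I_{R[G]}$; since $G$ is a $p$-group and $R$ is a $k$-algebra, $I_{R[G]}=R\otimes_k I_{k[G]}$ is nilpotent, so $\alpha$ is a unit and $\bar\varphi(1\otimes u)$ is an $R[G]$-generator of $A$. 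For injectivity, $\ker\bar\varphi$ is a $G$-stable ideal of the Galois extension $B/R$ meeting $R=B^G$ trivially, hence vanishes by the Galois correspondence of \cite{chr}.

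The explicit Artin-Schreier-Witt form $\sigma_i(\underline Y)=Y_i-Y_i^p+\gamma_i(Y_{i+1},\ldots,Y_{n-1})$ is then read off from the inductive construction of $U$ in Theorem \ref{arb_p_grp}: walking along a chief series of $G$ with cyclic composition factors of order $p$, at level $i$ the variable $Y_i$ is adjoined so that $(Y_i)g=Y_i+f_i(Y_{i+1},\ldots,Y_{n-1})$ for the chosen generator $g$ of the $i$-th factor, and a direct Artin-Schreier computation over the $k$-subalgebra generated by the deeper variables identifies the corresponding invariant as $Y_i-Y_i^p+\gamma_i$. The main obstacle, I expect, is the surjectivity step: it relies delicately on the $p$-group hypothesis through nilpotence of $I_{R[G]}$, without which a trace-$1$ element in a free $R[G]$-module need not be a free generator. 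Once surjectivity is secured, injectivity is purely formal from the Galois correspondence, and the explicit shape of $\sigma_i$ follows from a routine inspection of the construction of $U$.
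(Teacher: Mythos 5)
Your proof is correct, but it runs along a genuinely different track from the paper's. The paper never checks bijectivity of a comparison map: invoking the universality of the standard subalgebra $U$ (the discussion preceding Definition \ref{standard_universal}, which rests on Proposition \ref{D_k_gen_1}), it writes $A\cong R\otimes_{B^G}B$ with $B=\phi(U)\le A$ the image of a $G$-equivariant epimorphism $\phi:U\to B$, observes via Theorem \ref{first_main}(iv) that $\ker\phi$ is extended from the ideal $\ker(\phi|_{U^G})\unlhd U^G$, and then obtains the presentation purely by rewriting tensor products (Lemmas \ref{struct_thm_lemma1} and \ref{struct_thm_lemma2}): $A\cong R\otimes_{B^G}B\cong R\otimes_{U^G}U\cong R[\underline Y]/(\sigma_0(\underline Y)-r_0,\dots,\sigma_{n-1}(\underline Y)-r_{n-1})$. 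You go the other way: you construct the canonical map $\bar\varphi:R\otimes_{U^G}U\to A$ and verify surjectivity and injectivity by hand, and both verifications are sound, resting on the same engine, Theorem \ref{first_main}. For surjectivity your unit argument in $R[G]$ (nilpotence of the augmentation ideal) works, but it can be shortcut: $\bar\varphi(1\otimes u)$ is a point of $A$, so Theorem \ref{first_main}(ii) gives $A=\sum_{g\in G}R\cdot\bar\varphi(1\otimes u)g$ at once. For injectivity the precise reference is Theorem \ref{first_main}(iv) applied to the trace-surjective algebra $B$ (a $G$-stable ideal meeting $B^G=R$ trivially is zero) rather than ``the Galois correspondence of \cite{chr}'', though the two amount to the same thing here by Proposition \ref{G_p_gr_ts_iff_gal}. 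What your route buys: the isomorphism is realized by an explicit canonical map, and it becomes visible that only trace-surjectivity of $U$ and polynomiality of $U^G$ are used at this stage, the retract property entering only through the construction of $U$ in Theorem \ref{UG_is_poly}; what the paper's route buys: no surjectivity/injectivity bookkeeping at all, at the modest cost of the two auxiliary tensor lemmas. One caveat: your closing paragraph treats the shape $\sigma_i=Y_i-Y_i^p+\gamma_i(Y_{i+1},\dots,Y_{n-1})$ as a routine Artin-Schreier computation, but in the paper this is the substance of Theorem \ref{UG_is_poly} and Remark \ref{UG_is_poly_rem}(ii): the $\gamma_i$ arise by solving the cocycle equation (\ref{cohomology_equation}), where the relevant $1$-cocycle is a coboundary because $U_{G/Z}$ is a free $U_{G/Z}^{G/Z}[G/Z]$-module; since the statement of the Structure Theorem already presupposes that result, you should cite it rather than re-derive it.
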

Since $A\in\mathfrak{Ts}$ if and only if $A^G\le A$ is a Galois-extension in the sense of
\cite{chr} (see Corollary \ref{G_p_gr_ts_iff_gal}), this can also be viewed as a structure theorem
for arbitrary $p$-group-Galois-extensions of commutative $k$-algebras.
A further application of Theorem \ref{arb_p_grp_intro} provides

\begin{thm}\label{D_k_G is poly_intro}
The invariant ring $D_k^G$ is a polynomial ring of Krull-dimension $|G|-1$.
\end{thm}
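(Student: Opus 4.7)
The plan is to reduce Theorem~\ref{D_k_G is poly_intro} to the Structure Theorem~\ref{strct_thm_intro}.

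First, observe that $D_k$ is itself a polynomial $k$-algebra of Krull dimension $|G|-1$: the defining relation $\alpha=-1+\sum_{g\in G}X_g=0$ can be solved for any single $X_{g_0}$, presenting $D_k\cong k[X_g:g\ne g_0]$. Since $G$ is finite, the inclusion $D_k^G\hookrightarrow D_k$ is integral, so $\dim D_k^G=\dim D_k=|G|-1$. It remains to prove polynomiality of $D_k^G$.

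Next, I would apply Theorem~\ref{strct_thm_intro} to $A=D_k$ with $R=D_k^G$, which is legitimate since $D_k$ is trace-surjective (a free $kG$-module). This produces a presentation
\[
D_k\;\cong\;D_k^G[Y_0,\ldots,Y_{n-1}]\big/\bigl(\sigma_i(\underline Y)-r_i:0\le i\le n-1\bigr),
\]
where the $Y_i$ are the polynomial generators of the retract $U\le D_k$ from Theorem~\ref{arb_p_grp_intro} and $\sigma_i=Y_i-Y_i^p+\gamma_i(Y_{i+1},\ldots,Y_{n-1})$. Because the $Y_i$ lie in $U$, the evaluation $\sigma_i(Y_0,\ldots,Y_{n-1})$ belongs to $U^G$, so each $r_i\in D_k^G$ is in fact the image of the $i$-th polynomial generator $\tilde\sigma_i\in U^G$ under $U^G\hookrightarrow D_k^G$. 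Applying the same structure theorem to $U$ itself gives $U=U^G[Y_0,\ldots,Y_{n-1}]/(\sigma_i-\tilde\sigma_i)$, and comparing the two presentations identifies $D_k$ as the pushout
\[
D_k\;\cong\;D_k^G\otimes_{U^G}U.
\]
Thus $\operatorname{Spec}(D_k)\to\operatorname{Spec}(U)$ is the base change of $\operatorname{Spec}(D_k^G)\to\operatorname{Spec}(U^G)$ along the faithfully flat finite $G$-Galois cover $\operatorname{Spec}(U)\to\operatorname{Spec}(U^G)$.

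The final step is to descend the polynomiality of $D_k$ along this cover to conclude that $D_k^G$ is polynomial of dimension $|G|-1$. Concretely, I would aim to exhibit $|G|-1-n$ algebraically independent invariants $t_1,\ldots,t_{|G|-1-n}\in D_k^G$ such that $D_k^G=k[\tilde\sigma_0,\ldots,\tilde\sigma_{n-1},t_1,\ldots,t_{|G|-1-n}]$, with the $t_j$ obtained as traces of polynomial coordinates of $D_k$ transverse to the retract $U$. The main obstacle is this last descent step: since $U^G\hookrightarrow U$ is purely inseparable of $p$-power degree, standard \'etale descent does not apply, and retracts of polynomial algebras are not in general polynomial. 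One must exploit the explicit triangular form of the $G$-action on $U$ (Theorem~\ref{arb_p_grp_intro}(ii)) together with the concrete description $D_k=\mathrm{Sym}_k(kG)/(\alpha)$ to produce these additional polynomial generators of $D_k^G$ and verify their algebraic independence.
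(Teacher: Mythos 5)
Your reduction steps are sound as far as they go: $\dim D_k^G=|G|-1$ by integrality, $D_k$ is trace-surjective, and the identification $D_k\cong D_k^G\otimes_{U^G}U$ is exactly what the paper itself uses (it is Proposition \ref{D_k_gen_1}, and there is no circularity in invoking Theorem \ref{strct_thm_intro}, since the paper proves it before Theorem \ref{D_k_G is poly_intro}). But everything up to that point is preparation; the theorem's entire content is the step you defer, and your proposal stops exactly where the proof has to begin. Saying you ``would aim to exhibit'' $|G|-1-n$ further invariants $t_j$ as ``traces of coordinates transverse to $U$'' is a statement of intent, not an argument: you give no construction of the $t_j$, no proof that they together with the $\tilde\sigma_i$ generate $D_k^G$, and no proof of algebraic independence. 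You correctly diagnose why this is hard (the cover $\operatorname{Spec}(U)\to\operatorname{Spec}(U^G)$ is not \'etale-locally trivial in any useful sense, and retracts of polynomial rings need not be polynomial), but diagnosing the obstacle is not overcoming it.

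What is missing is the paper's inductive mechanism, of which there is no trace in your proposal. The paper argues by induction on $n=\log_p|G|$ along a central subgroup $Z\cong C_p$: with $B:=D_k(G)^G$ and $C:=D_k(G/Z)^{G/Z}$ (polynomial by the induction hypothesis), one expands the point $x_e=\sum_{I\le\underline{p-1}}b_I\tilde y^I$ in the free $B$-basis of monomials coming from Remark \ref{UG_is_poly_rem}(i), and shows first that $B=k[\sigma_0,C,b_I]$ (because the subring $k[\sigma_0,C,b_I][U_G]$ is $G$-stable and contains all $x_g$), and then--- via the explicit relative-trace computations of Lemma \ref{cycl_p_lemma} applied to $\tr^{(Z)}(\tilde y_0x_e)$ and $\tr(\tilde y_0 x_e)$---that the generators $b_{p-1,I'}$ lie in $C$ and $b_{p-2,\underline{p-1}}$ lies in $C[\sigma_0]$, hence are redundant. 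This cuts the generating set down to exactly $1+(p^{n-1}-1)+(p^n-p^{n-1}-1)=p^n-1$ elements, and a generating set of size equal to the Krull dimension forces $B$ to be a polynomial ring. Your ``triangular action plus concrete description of $D_k$'' hint points vaguely in this direction, but without the induction over $G/Z$, the coefficient expansion, and the trace identities that kill the redundant generators, there is no proof.
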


This result and Theorem \ref{arb_p_grp_intro} show that, for every finite $p$-group $G$,
there are faithful non-homogeneous modular representations as automorphisms of
polynomial rings of dimensions $|G|-1$ and $\mathrm{log}_p(|G|)$, respectively,
such that the corresponding rings of invariants are polynomial rings. As mentioned before,
this is quite different from the situation of faithful linear and \emph{graded} group actions on polynomial rings, where it is known that the rings of invariants being polynomial again requires
$G$ to be generated by (pseudo-) reflections. The paper is organized as follows:
\\[1mm]
In Section \ref{basic} we recall some basic results from the representation
theory of finite groups. Most of the material is well known
and can be found in standard textbooks, formulated and proved
for \emph{finitely generated} artinian rings and modules. Since
we would like to avoid this restriction, we will, where needed,
include short proofs of some of these results, in a more general
context.
\\
In Section \ref{ex_cp} we will look at the example where $G$ is
cyclic of order $p$. Lemma \ref{cycl_p_lemma} and Proposition
\ref{cycl_p_univ_model} provide an induction base for
the proof of Theorem \ref{arb_p_grp}, and Corollary
\ref{D_k_poly_for_Cp} for Theorem \ref{D_k_G is poly_intro}. We also state a
structure theorem for ``Artin-Schreier" extensions of
commutative rings, generalizing the corresponding
theorem for $C_p$-extensions of fields of characteristic $p>0$. This follows as a
special case from Theorem \ref{strct_thm_intro}, so
we omit a proof in Section \ref{ex_cp}.
\\
In Section \ref{ts_alg} we introduce and study the trace-surjective $G$-algebras
for arbitrary finite $p$-groups and particularly consider the \emph{dehomogenized ring}
$D_k$ and its quotients.
\\
In Section \ref{std_sub} we define ``standard subalgebras" of $D_k$, from which,
quite surprisingly, all objects in $\mathfrak{Ts}$ can be constructed by
forming a quotient, followed by an ``extension of invariants".
In particular we prove that there is always a polynomial ring $U$
of dimension $\mathrm{log}_p(|G|)$, which is a standard subalgebra and
we show that this is the minimal possible dimension of such a polynomial
ring, at least if $k=\mathbb{F}_p$.
\\
Section \ref{poly_strct} will finish the proofs of the main
theorems, using the results of Section \ref{std_sub}.
\\
Finally in Section \ref{concl_rem} we outline some future paths of
research and point out some open questions and conjectures
related to this work.
\\
\textsl{Acknowledgements}:
The authors would like to thank an anonymous referee for many helpful comments and
suggestions, which we hope considerably improve the readability of the paper.

\section{Basic observations}\label{basic}

{\bf Notation}:\ From now on throughout the paper $k$ will be a field of characteristic $p>0$ and $G$ will
be a finite $p$-group. All group actions on sets will be written as \emph{right} actions and
if $\Omega$ is a set on which the group $G$ acts, then
$\Omega^G:=\{\omega\in \Omega\ |\ \omega g=\omega\ \forall g\in G\}$, the set of $G$-fixed points
in $\Omega$. Usually $A$ will denote a commutative $k$-algebra, on which
$G$ acts by $k$-algebra automorphisms. As a shorthand we will refer to such an algebra $A$
as a $G$-algebra. For any ring $A$, its Krull-dimension will be denoted by ${\rm Dim}(A)$.
We will now collect some well known elementary facts needed later.

\begin{lemma} \label{observation1}
Let $W$ be a (right) $kG$ - module with finite dimensional submodules $V_i$ for $i\in I$.
Then
$$\sum_{i\in I} V_i = \oplus_{i\in I} V_i
\iff \sum_{i\in I} V_i^G = \oplus_{i\in I} V_i^G.$$
\end{lemma}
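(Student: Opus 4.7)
The direction ``$\Rightarrow$'' is immediate, since if the whole sum is direct, so is the sum of any family of subspaces of the summands. The real content is ``$\Leftarrow$''.

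First I would reduce to the case where the index set $I$ is finite. A sum $\sum_{i\in I}V_i$ is direct if and only if $\sum_{j\in J}V_j$ is direct for every finite $J\subseteq I$, and the same characterization applies to $\sum V_i^G$. So it suffices to prove the implication under the assumption that $I$ is finite, and I would then proceed by induction on $|I|$, the case $|I|=1$ being trivial.

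For the inductive step, assume $\sum_{i\in I}V_i^G=\bigoplus_{i\in I}V_i^G$ with $|I|\ge 2$, and fix some $i_0\in I$. Removing $i_0$ gives a smaller index set for which the fixed-point sum is still direct, so by induction $W:=\sum_{i\ne i_0}V_i=\bigoplus_{i\ne i_0}V_i$. Because this inner sum is already direct and each $V_i$ is a $kG$-submodule, taking fixed points commutes with the sum here: $W^G=\bigoplus_{i\ne i_0}V_i^G=\sum_{i\ne i_0}V_i^G$. To conclude the outer sum is direct, it is enough to show $N:=V_{i_0}\cap W=0$. Suppose $N\ne 0$; then $N$ is a nonzero finite-dimensional $kG$-submodule.

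The crucial ingredient is the standard fact that for a finite $p$-group $G$ and a field $k$ of characteristic $p$, every nonzero finite-dimensional $kG$-module has nontrivial $G$-fixed points (equivalently, the only simple $kG$-module is the trivial one). Applying this to $N$ yields a nonzero $v\in N^G\subseteq V_{i_0}^G\cap W^G=V_{i_0}^G\cap\sum_{i\ne i_0}V_i^G$, which contradicts the hypothesis that $\sum_{i\in I}V_i^G$ is direct. Hence $N=0$, completing the induction.

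The only potential obstacle is the step where one argues $W^G=\sum_{i\ne i_0}V_i^G$; in general, fixed points do not commute with sums that are not direct. This is precisely why one must first use the inductive hypothesis to establish that $W$ is already a direct sum before computing its fixed points. Everything else is routine, relying only on the $p$-group fixed-point fact and on the equivalence ``$\sum V_i$ is direct iff $V_{i_0}\cap\sum_{i\ne i_0}V_i=0$ for every $i_0$'' together with directness of the smaller sums.
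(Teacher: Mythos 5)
Your proof is correct and follows essentially the same route as the paper's: reduction to finite index sets, induction adding one index at a time, the observation that fixed points commute with the (already direct) smaller sum, and the fact that for a $p$-group in characteristic $p$ every nonzero finite-dimensional $kG$-module has nonzero fixed points (which is exactly the paper's statement that $V^G\le V$ is an essential extension). The only cosmetic difference is that you argue by contradiction where the paper concludes $Y=0$ directly from essentiality.
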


\begin{proof} We only need to prove ``$\Leftarrow$" and for this it suffices to show
$\sum_{\ell=1}^k V_{i_\ell} = \oplus_{\ell=1}^k V_{i_\ell}$ for any \emph{finite} number
$k\in \mathbb{N}$. The proof is by induction starting with $k=1$, where the claim
is obvious. Let $Y:= V_{i_{k+1}}\cap (\oplus_{\ell=1}^k
V_{i_\ell})$ with $i_{k+1}\not\in \{i_1,\cdots,i_k\}$, then
$V_{i_{k+1}}^G \cap Y = $
$$V_{i_{k+1}}^G \cap (\oplus_{\ell=1}^k V_{i_\ell}) =
V_{i_{k+1}}^G\cap (\oplus_{\ell=1}^k V_{i_\ell}^G)
= 0.$$ Since $V_{i_{k+1}}^G \le V_{i_{k+1}}$ is an essential
extension, i.e. $V_{i_{k+1}}^G\cap M\ne 0$ for every $0\ne M\le V_{i_{k+1}}$,
we conclude that $Y=0$.
\end{proof}

Now let $V$ be a (right) $kG$ - module. The
\emph{transfer map} is defined to be the $k$-linear map
$$\tr:=\tr^{(G)}:\ V\to V^G,\ v \mapsto \sum_{g\in G}\ vg.$$
If $W$ is another right $kG$-module, then $G$ has a natural action on
${\rm Hom}_k(V,W)$ by conjugation, i.e.
$\alpha^g(v) = (\alpha(vg^{-1}))g$ with ${\rm Hom}_k(V,W)^G = {\rm Hom}_{kG}(V,W).$
Note that $\tr(\alpha) = \sum_{x\in G} \alpha^x \in {\rm Hom}_{kG}(V,W)$.

\begin{prp}\label{higman} (Higman's projectivity criterion)
Let $V$ be a $kG$-module, then the following are equivalent:
\begin{enumerate}
\item There is $\alpha\in {\rm End}_k(V)$ with
$\tr(\alpha)={\rm id}_V$.
\item $V$ is a direct summand of $kG\otimes_k V.$
\item $V$ is a free $kG$-module.
\end{enumerate}
\end{prp}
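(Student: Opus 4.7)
The plan is to establish the cycle (3) $\Rightarrow$ (1) $\Leftrightarrow$ (2) $\Rightarrow$ (3). The equivalence (1) $\Leftrightarrow$ (2) and the implication (3) $\Rightarrow$ (1) are formal and should follow from direct constructions involving the transfer, while (2) $\Rightarrow$ (3) is the substantive step, relying on the local structure of $kG$.

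For (3) $\Rightarrow$ (1), if $V = \bigoplus_{i \in I} v_i \cdot kG$ is free on a basis $\{v_i\}$, I would take $\alpha \in {\rm End}_k(V)$ to be the $k$-linear projection onto $\bigoplus_i k v_i$ annihilating all $v_i g$ with $g \ne 1$. Unwinding the formula $\tr(\alpha)(w) = \sum_{g \in G} \alpha(w g^{-1}) g$ on a basis element $v_i h$ immediately gives $\tr(\alpha)(v_i h) = v_i h$, hence $\tr(\alpha) = {\rm id}_V$.

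For (1) $\Leftrightarrow$ (2), consider the canonical right $kG$-linear surjection $\mu : kG \otimes_k V \to V$, $g \otimes v \mapsto vg$, where $kG$ acts on $kG \otimes_k V$ via right multiplication on the first tensor factor. Given $\alpha$ as in (1), I would check that $\sigma_\alpha(v) := \sum_{g \in G} g \otimes \alpha(vg^{-1})$ defines a right $kG$-linear section of $\mu$: equivariance follows from the substitution $g \mapsto gh$, and $\mu \circ \sigma_\alpha = \tr(\alpha) = {\rm id}_V$ is immediate. Conversely, every $kG$-linear section $\sigma$ of $\mu$ admits a unique expansion $\sigma(v) = \sum_{g} g \otimes \beta_g(v)$ with $\beta_g \in {\rm End}_k(V)$; $kG$-equivariance of $\sigma$ forces $\beta_g(v) = \beta_1(vg^{-1})$, so setting $\alpha := \beta_1$ recovers (1) after applying $\mu$.

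The main obstacle is (2) $\Rightarrow$ (3). Any $V$ satisfying (2) is a direct summand of the free $kG$-module $kG \otimes_k V$ (free of rank $\dim_k V$ on $\{1 \otimes e_i\}$ for a $k$-basis $\{e_i\}$ of $V$), hence projective. Because $G$ is a $p$-group and ${\rm char}(k) = p$, the augmentation ideal coincides with the Jacobson radical $J(kG)$ and $kG$ is a (not necessarily commutative) local ring with residue field $k$. When $V$ is finitely generated, freeness follows by lifting a $k$-basis of $V/V\cdot J(kG)$ and applying Nakayama's lemma. In the general case, which the authors explicitly wish to include, I would invoke Kaplansky's theorem that every projective module over a local ring is free — this reduces, via a decomposition into countably generated projective summands, to the finitely generated argument. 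This infinite-rank step is precisely where the care beyond the textbook artinian proof is required.
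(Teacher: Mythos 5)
Your proof is correct, but note that the paper itself does not prove this proposition at all: it simply cites \cite{Huppert:Blackburn} (Theorem 7.8, p.~86) and \cite{bens1:b} (Proposition 3.6.4, p.~70). Your argument is essentially the standard textbook proof of Higman's criterion --- the projection-onto-a-basis construction for (iii)$\Rightarrow$(i), and the $kG$-linear section $\sigma_\alpha(v)=\sum_{g\in G} g\otimes\alpha(vg^{-1})$ of the multiplication map $\mu$ for (i)$\Rightarrow$(ii) --- supplemented by the observation that $kG$ is a local ring (the augmentation ideal being the nilpotent Jacobson radical, as $G$ is a $p$-group and ${\rm char}(k)=p$), so that Kaplansky's theorem upgrades ``projective'' to ``free'' with no finiteness hypothesis in (ii)$\Rightarrow$(iii). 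That last step is genuine added value: the paper applies the proposition to algebras of infinite $k$-dimension (e.g.\ in Lemma \ref{tr}), and indeed the authors elsewhere (Lemma \ref{rec_free}) include their own proofs precisely because the literature states such results only for finitely generated modules; your treatment makes that same care explicit here. One small wrinkle: as written, your (ii)$\Rightarrow$(i) argument starts from a $kG$-linear section of $\mu$, whereas (ii) only provides some pair of maps exhibiting $V$ as a direct summand of $kG\otimes_k V$; you need the extra remark that (ii) makes $V$ projective (since $kG\otimes_k V$ is free), so that the surjection $\mu$ in particular splits. This is harmless, because your cycle (iii)$\Rightarrow$(i)$\Rightarrow$(ii)$\Rightarrow$(iii) already closes all the equivalences without that direction.
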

\begin{proof} See e.g. \cite{Huppert:Blackburn} Theorem 7.8 pg.86, or
\cite{bens1:b} Proposition 3.6.4 pg.70.
\end{proof}



The following lemma tells us how to recognize free summands in
$kG$-modules, using the transfer map $\tr$. In the literature it
is usually stated for finitely generated $kG$-modules, but since
we need it without that hypothesis, we include a short proof:

\begin{lemma} \label{rec_free}
Let $V$ be an arbitrary $kG$-module. Then the following are
equivalent:
\begin{enumerate}
\item
${\rm tr}(V) \ne 0$;
\item there is a free direct summand $0\ne F\le V$ containing ${\rm tr}(V)$.
\end{enumerate}
Moreover $V$ is free if and only if ${\rm tr}(V) = V^G$. If $v\in V$
satisfies ${\rm tr}(v)\ne 0$, then
$\langle vg\ |g\in G\rangle \in kG-mod$ is free of rank one.
\end{lemma}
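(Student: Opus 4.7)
The implication (2)$\Rightarrow$(1) is immediate: a free summand $F$ of $V$ satisfies $\tr(F)\neq 0$ by Proposition \ref{higman}, so $\tr(V)\neq 0$. The key structural fact I would rely on throughout is that $kG$ is a local ring (since $G$ is a $p$-group and $\mathrm{char}(k)=p$) with one-dimensional socle spanned by $\tr=\sum_{g\in G}g$. This at once yields the final assertion: if $v\alpha=0$ for some non-zero $\alpha\in kG$, then $\mathrm{Ann}_{kG}(v)$ is a non-zero right ideal in $kG$, hence contains the socle $k\cdot\tr$; so $v\cdot\tr=\tr(v)=0$, contradicting $\tr(v)\neq 0$. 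Therefore $\langle vg\mid g\in G\rangle=v\cdot kG$ is $kG$-free of rank one.

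For the harder direction (1)$\Rightarrow$(2), I would pick a family $\{v_i\}_{i\in I}\subseteq V$ such that $\{\tr(v_i)\}_{i\in I}$ is a $k$-basis of $\tr(V)$. By the previous paragraph, each cyclic submodule $v_i kG$ is free of rank one, and its fixed-point space is $(v_i kG)^G=k\cdot\tr(v_i)$. Since the $\tr(v_i)$ are $k$-linearly independent, Lemma \ref{observation1} (applied to these finite-dimensional submodules) upgrades the independence of fixed parts to directness of the $v_i kG$ themselves, so $F:=\sum_i v_i kG=\bigoplus_i v_i kG$ is a free $kG$-submodule containing $\tr(V)=F^G$. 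The main obstacle is then to show that this $F$ is a $kG$-module direct summand of $V$. Using Higman's criterion (Proposition \ref{higman}), choose $\alpha\in\mathrm{End}_k(F)$ with $\tr(\alpha)=\mathrm{id}_F$; fix any $k$-linear projection $\pi_0\colon V\to F$; and set $\pi:=\tr(\alpha\circ\pi_0)$. Then $\pi$ is $kG$-linear as a transfer, and for $v\in F$ one has $\pi_0(vg^{-1})=vg^{-1}$, so $\pi|_F=\tr(\alpha)=\mathrm{id}_F$. This gives the desired retraction $V=F\oplus\ker\pi$.

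Finally, for the ``moreover'' statement: if $V$ is free then obviously $\tr(V)=V^G$. Conversely, if $\tr(V)=V^G$, the decomposition $V=F\oplus\ker\pi$ just constructed gives $V^G=F^G\oplus(\ker\pi)^G=\tr(V)\oplus(\ker\pi)^G$, forcing $(\ker\pi)^G=0$. Because $G$ is a $p$-group and $\mathrm{char}(k)=p$, every non-zero $kG$-module has non-zero $G$-fixed points (this is the essentiality fact invoked in the proof of Lemma \ref{observation1}), so $\ker\pi=0$ and $V=F$ is free. The one step I would expect to need the most care with is the construction of the splitting: the ingredients are elementary in isolation, but the trick is to combine Higman's criterion on $F$ with an \emph{arbitrary} vector-space retraction $\pi_0$ and then use the transfer to upgrade the composition to a $kG$-linear map preserving $\mathrm{id}_F$.
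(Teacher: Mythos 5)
Your proof is correct, and your construction of the free submodule $F$ is exactly the paper's: take preimages $v_i$ of a $k$-basis of $\tr(V)$, observe that each cyclic module $v_ikG$ is free of rank one with fixed space $k\cdot\tr(v_i)$, and use Lemma \ref{observation1} to make the sum direct. Where you genuinely diverge is the key step of showing that $F$ is a direct summand of $V$. The paper notes that free $kG$-modules are injective (finitely generated projective $=$ injective $=$ free for $kG$) and invokes Bass's theorem that over a Noetherian ring an arbitrary direct sum of injectives is injective; thus $F$ is injective and splits off automatically. You instead build an explicit $kG$-linear retraction $\pi=\tr(\alpha\circ\pi_0)$ from Higman's criterion applied to $F$ together with an arbitrary $k$-linear projection $\pi_0\colon V\to F$; this is a standard relative-trace argument and it is sound (for $v\in F$ one indeed gets $\pi(v)=\tr(\alpha)(v)=v$, and the transfer lands in $\mathrm{Hom}_{kG}(V,F)$). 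Your route is more self-contained: it avoids self-injectivity of $kG$ and Bass's theorem entirely, needing only Proposition \ref{higman} --- with the caveat that you apply the implication (iii)$\Rightarrow$(i) to a possibly infinite-rank free module $F$, which is legitimate under the paper's formulation of that proposition (and easy to verify directly by choosing $\alpha$ block-diagonally on the free summands), though the cited textbook proofs are usually phrased for finitely generated modules. The paper's route is shorter given the quoted results and yields the stronger conclusion that $F$ is injective. You also reorganize the logic of the rank-one claim: you prove it first, via the fact that every non-zero right ideal of the local algebra $kG$ contains the one-dimensional socle $k\cdot\sum_{g\in G}g$ (so $\tr(v)\neq 0$ forces $\mathrm{Ann}_{kG}(v)=0$), and then feed it into (i)$\Rightarrow$(ii); the paper proves (i)$\Rightarrow$(ii) first (the injectivity of its maps $\theta_i$ rests on the same essential-socle fact) and deduces the rank-one claim afterwards from ``has a free summand and dimension $\le |G|$''. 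The ``moreover'' equivalence is handled identically in both arguments.
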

\begin{proof}
(i) $\Rightarrow$ (ii):\ Let $\{ w_i\ |\ i\in I\}$ be a $k$ - basis
of ${\rm tr}(V)$ with $0\ne w_i:={\rm tr}(v_i)$ for $v_i\in V$. The
homomorphism $\theta_i:\ kG\to V,\ f\mapsto v_if$
maps the one-dimensional socle $(kG)^G= k\cdot {\rm tr}(1_G)$
onto the line $k\cdot w_i\subseteq V^G$. Hence every $\theta_i$ is a
monomorphism. Let $V_i:= \theta_i(kG)$, then $(V_i)^G \cong
k\cdot w_i$, hence, by Lemma \ref{observation1}, $F:= \sum_{i\in I} V_i = \oplus_{i\in I} V_i$ is a free submodule
of $V$. It is well known that for $kG$-modules
the notions of finitely generated projective, injective and free
modules coincide. Since $kG$ is Noetherian, it follows from a result by
H. Bass (see \cite{BH} Remark 3.1.4 or \cite{rotman} Theorem 4.10), that arbitrary direct sums of
injective modules are injective, so $F$ is an injective
submodule of $V$ with $V\cong F\oplus W\ {\rm and}\  {\rm tr}(V) = {\rm tr}(F).$
\\
(ii) $\Rightarrow$ (i): Since $0\ne F \cong \oplus_{i\in
I}(kG)^{(i)}$ for some index set $I\ne\emptyset$, we obtain
${\rm tr}(V)\ge {\rm tr}(F) \ne 0.$\\
We have seen that every $kG$-module $V$ splits as $V\cong F\oplus
W$, such that $F$ is free with
$${\rm tr}(V)={\rm tr}(F) = F^G \le F^G \oplus W^G = V^G.$$
In this equation we have equality if and only if $W^G =0$, which is
equivalent to $W=0$ or to $V=F$ being a free $kG$-module.\\
If ${\rm tr}(v)\ne 0$ for $v\in V$, then $\langle vg\ |\ g\in G\rangle$ is of dimension
$\le |G|$ and has a free
summand; hence it must be free of rank one.
\end{proof}

\begin{lemma}\label{tr}
Let $A$ be a $G$-algebra, then the following are equivalent:
\begin{enumerate}
\item $1=\tr(a)$ for some $a\in A$.
\item $A^G=\tr(A)$.
\item $A$ is free as a $kG$-module.
\item There is a $kG$ -submodule $W\le A$, isomorphic
to the regular $kG$-module $V_{reg}$, such that $1_A\in W$.
\end{enumerate}
\end{lemma}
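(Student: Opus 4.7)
The plan is to prove the four conditions equivalent in the cycle (i) $\Rightarrow$ (ii) $\Rightarrow$ (iii) $\Rightarrow$ (iv) $\Rightarrow$ (i), leaning heavily on Lemma \ref{rec_free} to get in and out of the freeness condition, and using the $A^G$-linearity of the trace map together with the fact that $1_A$ is $G$-fixed.

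First, for (i) $\Rightarrow$ (ii): given $1=\tr(a)$, take any $b\in A^G$; since $b$ commutes with the $G$-action, $b=b\tr(a)=\tr(ba)\in\tr(A)$, and the reverse inclusion is immediate. For (ii) $\Rightarrow$ (iii): the hypothesis $\tr(A)=A^G$ contains $1\ne 0$, so $\tr(A)\ne 0$, and Lemma \ref{rec_free} directly yields that $A$ is free as a $kG$-module (the final statement of that lemma).

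For (iii) $\Rightarrow$ (iv): since $A$ is free, Lemma \ref{rec_free} again gives $\tr(A)=A^G$, and in particular $1_A\in A^G=\tr(A)$, so we can pick $a\in A$ with $\tr(a)=1$. The last sentence of Lemma \ref{rec_free} then says $W:=\langle ag\mid g\in G\rangle$ is a free $kG$-submodule of rank one, i.e.\ isomorphic to $V_{reg}$, and it contains $1_A=\tr(a)=\sum_{g\in G}ag$. Finally, for (iv) $\Rightarrow$ (i): under the isomorphism $W\cong V_{reg}$, the fixed-point space $W^G$ is one-dimensional and equals $\tr(W)$; since $1_A\in W$ is $G$-fixed, $1_A\in W^G=k\cdot\tr(w)$ for any $kG$-generator $w$ of $W$, so $1_A=\lambda\tr(w)=\tr(\lambda w)$ for some $\lambda\in k$, giving (i).

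The only step that requires more than a one-line appeal is (iii) $\Rightarrow$ (iv), because freeness of $A$ a priori only produces some free cyclic summand, not one containing the distinguished element $1_A$. The trick I would use is to route through (ii) first, exploiting that $1_A\in A^G$ must then lie in $\tr(A)$; this lets me choose the generator $a$ so that the cyclic submodule generated by it automatically contains $1_A$. Everything else is bookkeeping with the transfer and the observation that $k\cdot 1_A\subseteq A^G$ so scalar factors from $k$ commute past $\tr$.
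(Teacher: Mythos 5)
Your proof is correct, but for the crucial freeness implication it takes a genuinely different route from the paper's. The paper first notes (i) $\iff$ (ii) (since $\tr(A)$ is an ideal of $A^G$), and then proves (i) $\iff$ (iii) via Higman's projectivity criterion (Proposition \ref{higman}): it introduces the $G$-equivariant maps $\mu:\ A\to {\rm End}_k(A),\ a\mapsto \mu_a$ (left multiplication) and $e:\ {\rm End}_k(A)\to A,\ \alpha\mapsto\alpha(1)$, and uses them to translate $1=\tr(a)$ into ${\rm id}_A=\tr(\mu_a)$ and back; its treatment of (i) $\Rightarrow$ (iv) and (iv) $\Rightarrow$ (i) coincides with yours. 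You instead close the cycle (ii) $\Rightarrow$ (iii) $\Rightarrow$ (iv) by appealing twice to Lemma \ref{rec_free}: once to the clause ``$V$ is free if and only if $\tr(V)=V^G$'' (note this is the \emph{penultimate} assertion of that lemma, not its ``final statement'' as you write --- the final sentence is the one about the cyclic module $\langle vg\ |\ g\in G\rangle$, which you correctly use later; a small citation slip worth fixing), and once to that final sentence to produce $W=\langle ag\ |\ g\in G\rangle$ containing $\tr(a)=1_A$. Your route is shorter and avoids the ${\rm End}_k(A)$ machinery entirely, but it leans wholly on Lemma \ref{rec_free}, whose proof depends on $kG$ being local with one-dimensional socle, i.e.\ on $G$ being a $p$-group in characteristic $p$. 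The paper's Higman-based argument is the one that generalizes: replacing ``free'' by ``projective'' it works for an arbitrary finite group $\Gamma$, which is exactly the flexibility invoked later in Remarks \ref{main_p_nc_rem} and \ref{Group_functors}.
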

\begin{proof} Clearly (i) $\iff$ (ii), since $\tr(A)$ is a two-sided ideal
of $A^G$. \\
For $a\in A$ let $\mu_a\in {\rm End}_k(A)$ denote the
homomorphism given by left-multiplication, i.e. $\mu_a(s)=a\cdot s$
for all $s\in A$. Then
$$(\mu_a)^g(s) = (\mu_a(sg^{-1}))g = (a\cdot sg^{-1})g=(ag)\cdot s =
\mu_{ag}(s),$$ hence the map $\mu:\ A\to {\rm End}_k(A),\ a\mapsto \mu_a$ is a unitary homomorphism of $G$-algebras.
On the other hand the map $e:\ {\rm End}_k(A) \to A,\ \alpha\mapsto
\alpha(1)$ satisfies
$e(\alpha^g) = (\alpha(1g^{-1}))g =\alpha(1)g=(e(\alpha))g,$ hence it is a homomorphism of
$kG$-modules with $e({\rm id}_A)=1$. We have
$e\circ \mu = {\rm id}_A$ and $(\mu\circ e)({\rm id}_A) = {\rm id}_A$.
If $1=\tr(a)$, then ${\rm id}_A=\mu(1)=\mu(\tr(a)) = \tr(\mu(a))$. On the other
hand if ${\rm id}_A = \tr(\alpha)$, then $1= e({\rm id}_A) =
\tr(e(\alpha))$. It now follows from Proposition \ref{higman} that (i) and (iii)
are equivalent.\\
``(i) $\Rightarrow$ (iv)" follows from Lemma \ref{rec_free};\\
``(iv) $\Rightarrow$ (i)":\ Since $W\cong V_{reg}$,
$W^G=\tr(W)=k1_A.$
\end{proof}

\begin{df}\label{ts-alg_df}
Let $A$ be a $G$-algebra such that ${\rm tr}(A)=A^G$, or any of
the other conditions in Lemma \ref{tr} is satisfied.
Then $A$ will be called a {\bf trace-surjective} $G$-algebra
(or a ${\rm ts}$-algebra, if $G$ is clear from the context).
An element $a\in A$ with ${\rm tr}(a)=1$ will be called a
{\bf point} of $A$.\\
Recall that $\mathfrak{Ts}$ denotes the category of all commutative trace-surjective $G$-algebras,
with morphisms being $G$-equivariant homomorphisms of $k$-algebras.
\end{df}

\section{Example of cyclic group of order $p$}\label{ex_cp}

In this section let $G:=\langle g\rangle \cong C_p$ be cyclic of order $p$,
$V_{reg}:=\oplus_{g\in G} kX_g$ the regular $kG$-module with $X_{gh}:=(X_g)h$
for $g,h\in G$. We define $D_k(G):={\rm Sym}(V_{reg})/(\alpha)$ with $\alpha=-1+\sum_{g\in G}X_g.$
Then $D_k(G)=k[x_0,x_1,\cdots,x_{p-1}]$ with
$x_i:=(X_1)g^i+(\alpha)$ and $x_0+\cdots+x_{p-1}=1$ and $G$-action defined by
$x_i g=x_{i+1 \mod(p)}$.
The results in Lemma \ref{cycl_p_lemma} and Proposition \ref{cycl_p_univ_model}
will be used in section \ref{std_sub} as a base of
induction for the general case of arbitrary finite $p$-groups.
In the same way Corollary \ref{D_k_poly_for_Cp} will be used for Theorem \ref{D_k_G is poly_intro}.
Define $y:=\sum_{i=1}^{p-1} ix_i$, then (with $x_p:=x_0$):
$$y g=\sum_{i=1}^{p-1} ix_{i+1}=\sum_{i=1}^{p-1} (i+1)x_{i+1}-\sum_{i=1}^{p-1} x_{i+1}=
\sum_{i=1}^{p-1} ix_i-\sum_{i=1}^p x_i= y-1.$$
Hence we find inside $D_k(G)$ the polynomial subalgebra $U:=k[y]$ of Krull-dimension one, on which $G$ has the natural (inhomogeneous) action defined by $g:\ y\mapsto y-1$.

\begin{lemma}\label{cycl_p_lemma}
Let $k[T]$ be the polynomial ring with $G$-action by algebra homomorphisms, defined by
$T g:=T-1$. Let ${\rm tr}:\ k[T]\to k[T]^G$ be the transfer map $f\mapsto \sum_{g \in G} fg.$
Then
\begin{enumerate}
\item For $0\le j\le 2p-2$: ${\rm tr}(T^j)=\sum_{i=0}^{p-1}(T-i)^j=
\cases{-1&{\rm if}\ j=p-1\ {\rm or}\ j=2p-2\cr
1&{\rm if}\ j=p=2\cr
0&{\rm otherwise}}.$
\\[1mm]
\item For $0\le j\le p-1$:
$\sum_{i=1}^{p-1}i(T-i)^j=
\cases{-T&{\rm if}\ $j=p-1\ge 2$\cr
(T+1)^j&{\rm if}\ p=2\cr
1&{\rm if}\ j=p-2\cr
0&{\rm otherwise}}.$
\end{enumerate}
In particular ${\rm tr}(T^{p-1})=-1$.
\end{lemma}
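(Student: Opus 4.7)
The plan is to expand $(T-i)^j$ by the binomial theorem and swap the order of summation, reducing both parts to classical power sums in $\mathbb{F}_p$. Writing $S_m := \sum_{i=0}^{p-1} i^m$ (with the convention $0^0 = 1$), one obtains
$$\sum_{i=0}^{p-1}(T-i)^j \;=\; \sum_{\ell=0}^{j}(-1)^\ell {j \choose \ell}T^{j-\ell}\,S_\ell, \qquad \sum_{i=1}^{p-1}i(T-i)^j \;=\; \sum_{\ell=0}^{j}(-1)^\ell {j \choose \ell}T^{j-\ell}\,S_{\ell+1}.$$
The key input is the classical identity: $S_m \equiv -1\pmod p$ whenever $m \ge 1$ and $(p-1)\mid m$, and $S_m \equiv 0\pmod p$ in all other cases (including $m=0$, where $S_0 = p$). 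I would recall the standard one-line proof via a generator $\zeta$ of $\mathbb{F}_p^\times$, rewriting the sum as a geometric series.

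For part (i) with $0 \le j \le 2p-2$, the only $\ell$ producing $S_\ell \not\equiv 0$ are $\ell = p-1$ and $\ell = 2p-2$. Taking $p$ odd first: if $j = p-1$, only $\ell = p-1$ appears and the contribution is $-(-1)^{p-1}{p-1 \choose p-1} = -1$; if $p-1 < j < 2p-2$, the surviving coefficient ${j \choose p-1}$ vanishes modulo $p$ by Lucas' theorem (writing $j = 1\cdot p + r$ with $0 \le r < p-1$, one has ${j \choose p-1} \equiv {1 \choose 0}{r \choose p-1} = 0$), so $\tr(T^j) = 0$; and for $j = 2p-2$ both relevant $\ell$ contribute, but the same Lucas argument applied to $2p-2 = 1\cdot p + (p-2)$ against $p-1 = 0\cdot p + (p-1)$ gives ${2p-2 \choose p-1} \equiv 0$, killing the potential $T^{p-1}$ term and leaving the constant $-1$. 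Small $j < p-1$ vanish trivially. The exceptional case $p = 2$ (where $j \in \{0,1,2\}$) is done by direct substitution and matches the table entries once one notes that $-1 = +1$ in $\mathbb{F}_2$.

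For part (ii) the same machinery with $S_{\ell+1}$ replacing $S_\ell$ restricts attention, for $p \ge 3$ and $0 \le \ell \le j \le p-1$, to the single value $\ell = p-2$: the next multiple $\ell+1 = 2p-2$ falls outside the range, and the Fermat value $\ell+1 = p$ yields $S_p = \sum_{i=1}^{p-1} i \equiv 0\pmod p$. The surviving term equals $(-1)^{p-2}{j \choose p-2}T^{j-p+2}\cdot(-1) = {j \choose p-2}T^{j-p+2}$ (using that $p-2$ is odd), which specialises to $(p-1)T = -T$ when $j = p-1$, to $1$ when $j = p-2$, and to $0$ when $j < p-2$. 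For $p = 2$, where $j \in \{0,1\}$ and the Fermat obstruction is absent, direct computation produces the claimed $(T+1)^j$. The final line $\tr(T^{p-1}) = -1$ is just the $j = p-1$ entry of (i).

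The only real obstacle is disposing of the $T^{p-1}$ contamination in $\tr(T^{2p-2})$; Lucas' theorem handles it in a single step. Everything else is routine bookkeeping with binomial coefficients and power sums modulo $p$.
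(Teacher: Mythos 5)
Your proof is correct. For $0\le j\le p-1$ in part (i), and for all of part (ii), your argument coincides with the paper's: binomial expansion, interchange of summation, and evaluation of the power sums $\sum_{i=0}^{p-1}i^m$ modulo $p$ (including the Fermat step $\sum_{i=0}^{p-1}i^{p}\equiv\sum_{i=0}^{p-1}i\equiv 0$ that isolates $\ell=p-2$ in part (ii)). The genuine difference is how part (i) is settled for $p\le j\le 2p-2$. You push the direct expansion through this upper range, so the terms ${j \choose p-1}T^{j-p+1}S_{p-1}$ survive formally and must be killed by showing ${j \choose p-1}\equiv 0$ modulo $p$, for which you invoke Lucas' theorem; this works and is uniform over the whole range, but it imports an extra arithmetic ingredient. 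The paper never expands $T^j$ for $j\ge p$ at all: setting $\sigma:=T^p-T\in k[T]^G$ and writing $j=p-1+k$ with $1\le k\le p-1$, it uses $T^j=T^k+\sigma T^{k-1}$ and the $k[T]^G$-linearity of the transfer to obtain $\tr(T^j)=\tr(T^k)+\sigma\,\tr(T^{k-1})=\tr(T^k)$, the last equality because $\tr(T^{k-1})=0$ by the already-proved lower range; the only surviving value is then $k=p-1$, i.e. $j=2p-2$. This Frobenius (Artin--Schreier) reduction avoids Lucas entirely and fits the role that the invariant $\sigma=T^p-T$ plays throughout Section~\ref{ex_cp} (e.g. $k[y]^G=k[\sigma]$ in Proposition \ref{cycl_p_univ_model}). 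Both proofs treat $p=2$ by direct verification, and your observation that the entries $-1$ and $1$ coincide in $\mathbb{F}_2$ matches the paper's reading of the case $j=p=2$.
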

\begin{proof}
The claims are obviously true for $p=2$, so we can assume $p>2$.
Set $\sigma:=T^p-T\in k[T]^G$. Assume first that $0\le j\le p-1$.
We have
$\sum_{i=0}^{p-1}(T-i)^j=$ $\sum_{i=0}^{p-1}\sum_{s=0}^j {j \choose s}(-i)^sT^{j-s}=$
$\sum_{s=0}^j{j \choose s}(-1)^s T^{j-s}\cdot \sum_{i=0}^{p-1}i^s=$
$-{j \choose p-1}T^{j-p+1},$
since $\sum_{i=0}^{p-1}i^s=
\cases{
\ 0 &$\forall\  0\le s<p-1$\cr
-1&{\rm for}\ $s=p-1$.}$
If $p\le j=p-1+k\le 2p-2$, then ${\rm tr}(T^j)=$ ${\rm tr}(T^pT^{k-1})=$
${\rm tr}(T^k)+\sigma {\rm tr}(T^{k-1})=$
${\rm tr}(T^k)$, and the result (i) follows.\\
We also have:
$\sum_{i=0}^{p-1}i(T-i)^j=\sum_{i=0}^{p-1}i\cdot \sum_{s=0}^j
{j\choose s}(-i)^sT^{j-s}=$
$$\sum_{s=0}^j (-1)^s{j\choose s}T^{j-s}[\sum_{i=0}^{p-1}i^{s+1}]=
(-1)^{p-2}{j\choose p-2}(-1)T^{j-(p-2)},$$
since for $0\le s \le p-1$ we have
$\sum_{i=0}^{p-1}i^{s+1}=-1\cdot \delta_{s,p-2}.$
The results in (ii) follow directly.
\end{proof}

\begin{prp}\label{cycl_p_univ_model}
The univariate polynomial ring $U:=k[y]\le D_k$ is a retract with point\\
$w=-y^{p-1}$\footnote{a ``reflexive point" as in
Definition \ref{reflexive_points}, if $p>2$}. In particular
there is a $G$-equivariant morphism of $k$-algebras
$$\theta=\theta^2:\ D_k\to D_k\ {\rm with}\ {\rm Im}(\theta)=k[y],$$
defined by the map $x_i\mapsto -(y-i)^{p-1}$, if $p>2$
and $\theta={\rm id}_{D_k}$, if $p=2$ such that
$$D_k=D_k^G \otimes_{k[\sigma]}k[y]\cong k[y]\oplus {\rm ker}(\theta),$$
with $k[y]^G=k[\sigma],$ a univariate polynomial ring in
$\sigma:=y^p-y.$
\end{prp}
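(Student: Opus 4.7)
The plan is to manufacture the idempotent $\theta$ explicitly from the transfer identities of Lemma \ref{cycl_p_lemma} and to read off the rest of the proposition from it. The case $p=2$ is immediate since $D_k=k[x_1]=k[y]$ and $\theta=\mathrm{id}$ works trivially. So assume $p>2$ and define $\theta\colon D_k\to D_k$ on generators by $x_i\mapsto-(y-i)^{p-1}$. Well-definedness modulo $\sum x_i=1$ amounts to the identity $\sum_{i=0}^{p-1}(y-i)^{p-1}=-1$, which is exactly the $j=p-1$ case of Lemma \ref{cycl_p_lemma}(i). $G$-equivariance is the direct check
\[
\theta(x_ig)=\theta(x_{i+1\bmod p})=-(y-(i+1))^{p-1}=-(yg-i)^{p-1}=\theta(x_i)g,
\]
and the image clearly lies in $k[y]$. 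For idempotency, since $\theta(D_k)\subseteq k[y]$ it suffices to verify $\theta(y)=y$, and this is
\[
\theta(y)=\sum_{i=1}^{p-1}i\,\theta(x_i)=-\sum_{i=1}^{p-1}i(y-i)^{p-1}=-(-y)=y,
\]
invoking the $j=p-1\ge 2$ case of Lemma \ref{cycl_p_lemma}(ii).

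With $\theta^2=\theta$ and $\mathrm{Im}(\theta)=k[y]$ in hand, the $k$-module splitting $D_k=k[y]\oplus\ker(\theta)$ is automatic; $\ker(\theta)$ is a $G$-stable ideal because $\theta$ is a $G$-equivariant $k$-algebra homomorphism, which is the retract assertion. The pointhood of $w=-y^{p-1}$ is the one-line computation $\mathrm{tr}(w)=-\mathrm{tr}(y^{p-1})=-(-1)=1$ via Lemma \ref{cycl_p_lemma}(i) once more. The identification $k[y]^G=k[\sigma]$ comes from noting that $\sigma=y^p-y$ is $G$-fixed (since $(y-1)^p-(y-1)=y^p-y$ in characteristic $p$), so $k[\sigma]\subseteq k[y]^G$, combined with the rank estimate $p=[k[y]:k[\sigma]]\ge [k[y]:k[y]^G]=|G|=p$; the first equality holds because $y$ satisfies the monic Artin--Schreier polynomial $Y^p-Y-\sigma$ of degree $p$ over $k[\sigma]$.

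Finally, for the tensor product identification I would observe that the $p$ distinct $G$-conjugates $y,y-1,\dots,y-(p-1)$ of $y$ are all roots of $\prod_{g\in G}(Y-yg)=(Y-y)^p-(Y-y)=Y^p-Y-\sigma$, so this is the minimal polynomial of $y$ over $D_k^G$. Consequently, the natural ring homomorphism
\[
D_k^G\otimes_{k[\sigma]}k[y]\;\cong\;D_k^G[Y]/(Y^p-Y-\sigma)\;\longrightarrow\;D_k,\qquad Y\mapsto y,
\]
is injective, with image $D_k^G[y]$. To conclude $D_k^G[y]=D_k$ I would appeal to the Galois correspondence for commutative rings (\cite{chr}): $D_k/D_k^G$ is Galois with group $C_p$, whose only subgroups are $1$ and $C_p$, so the only intermediate rings are $D_k^G$ and $D_k$, and since $y\notin D_k^G$ the intermediate ring $D_k^G[y]$ must equal $D_k$. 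This last step is the delicate one, relying on Galois machinery from the cited reference; everything else is driven mechanically by Lemma \ref{cycl_p_lemma} and the idempotency of the hand-built $\theta$.
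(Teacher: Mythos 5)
Your construction and verification of $\theta$ is exactly the paper's argument: well-definedness from $\sum_{i=0}^{p-1}(y-i)^{p-1}=-1$ (Lemma \ref{cycl_p_lemma}(i) with $j=p-1$), idempotency from $\theta(y)=-\sum_i i(y-i)^{p-1}=y$ (Lemma \ref{cycl_p_lemma}(ii)), and the pointhood of $w=-y^{p-1}$; up to and including the retract statement your proof is sound and essentially identical to the paper's. The genuine gap is in your final step. You claim that, by the Galois correspondence of \cite{chr}, ``the only intermediate rings are $D_k^G$ and $D_k$'' because $C_p$ has only two subgroups. That is not what the Chase--Harrison--Rosenberg theorem says: their correspondence is a bijection between subgroups and the \emph{separable, $G$-strong} intermediate subalgebras, not all intermediate rings. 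The stronger statement you use is simply false for ring extensions. Concretely, $k[\sigma]\le k[y]$ is itself a $C_p$-Galois extension (criterion (iii) of Theorem \ref{chase_harrison_rosenberg} holds since $y-yg^i=i$ is a unit), yet $k[\sigma]\subsetneq k[\sigma,\,y\sigma]\subsetneq k[y]$: every element of $k[\sigma,y\sigma]$, written in the basis $1,y,\dots,y^{p-1}$ over $k[\sigma]$, has all coefficients of $y^i$ with $i\ge 1$ divisible by $\sigma$, so $y\notin k[\sigma,y\sigma]$, while $y\sigma\notin k[\sigma]$. So the inference from ``no nontrivial subgroups'' to ``no proper intermediate rings'' does not go through, and surjectivity of your map $D_k^G\otimes_{k[\sigma]}k[y]\to D_k$ is left unproved.

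The gap is repairable in two ways. (a) Stay with \cite{chr}: your particular intermediate ring $D_k^G[y]\cong D_k^G[Y]/(Y^p-Y-\sigma)$ \emph{is} separable over $D_k^G$ (the derivative of the Artin--Schreier polynomial is $-1$, a unit) and is $G$-strong (in the domain $D_k$ the only nonzero idempotent is $1$, and distinct elements of $G$ restrict to distinct maps on $D_k^G[y]$ since they translate $y$ by distinct constants), so the fundamental theorem applies to it and gives $D_k^G[y]=D_k^H$ with $H$ the trivial group, i.e.\ $D_k^G[y]=D_k$; note also that Galoisness of $D_k^G\le D_k$ is only proved later in the paper (Proposition \ref{G_p_gr_ts_iff_gal}), though it can be checked directly via criterion (iii). (b) Follow the paper's elementary route, which avoids \cite{chr} altogether: in the quotient field $\mathbb{L}=\oplus_{i=0}^{p-1}\mathbb{L}^Gy^i$, Lemma \ref{cycl_p_lemma}(i) yields the explicit formulas $\ell_i=-{\rm tr}(\ell y^{p-1-i})$ for $1\le i\le p-1$ and $\ell_0={\rm tr}(\ell)-{\rm tr}(\ell y^{p-1})$ for the coordinates of any $\ell$; for $\ell\in D_k$ these traces visibly lie in $D_k^G$, giving $D_k=\oplus_{i=0}^{p-1}D_k^Gy^i$ directly, from which both the tensor decomposition and (by the same trick applied to $k[y]=\oplus_i k[\sigma]y^i$) the equality $k[y]^G=k[\sigma]$ follow. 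On that last point, your rank inequality $p=[k[y]:k[\sigma]]\ge[k[y]:k[y]^G]=p$ is also too terse as written: the freeness of $k[y]$ over $k[y]^G$ of rank $|G|$ is Theorem \ref{first_main}(ii) (a forward reference), and passing from equality of ranks to equality of the subrings needs an extra argument (e.g.\ that $k[y]^G$ is free over the PID $k[\sigma]$, forcing its rank to be $1$).
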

\begin{proof} If $p=2$, $y=x_1$, $k[y]=D_k$ and the statements are trivially true,
so we can assume $p>2$. It follows from Lemma \ref{cycl_p_lemma} that
${\rm tr}(-y^{p-1})=1$, hence the map $\theta$ is a well-defined
$G$-equi\-vari\-ant morphism of $k$-algebras.
Again by Lemma \ref{cycl_p_lemma},
$\theta(y)=-\sum_{i=1}^{p-1}i(y-i)^{p-1}=y,$ so ${\rm Im}(\theta)=k[y]$ and
$\theta^2=\theta$, hence $D_k=k[y]\oplus {\rm ker}(\theta)$ (for the general case see Lemma \ref{endo_krit_standard}).
Let $\mathbb{L}$ be the quotient field of $D_k$, then clearly
$\mathbb{L}=\mathbb{L}^G[y]=\oplus_{i=0}^{p-1}\mathbb{L}^G y^i.$
Write $\ell\in\mathbb{L}$ in the form
$\ell=\sum_{i=0}^{p-1}\ell_iy^i$, then the formulae
in Lemma \ref{cycl_p_lemma} (i) show that $\ell_0=-{\rm tr}(\ell y^{p-1})+{\rm tr}(\ell)$
and $\ell_i=-{\rm tr}(\ell y^{p-1-i})$ for $1\le i\le p-1$. Picking $\ell\in D_k$ we see
$D_k=\oplus_{i=0}^{p-1} D_k^G y^i$.
Obviously $k[y]=\oplus_{i=0}^{p-1} k[\sigma] y^i$. Let
$f\in k[y]^G$, then $f=f\cdot {\rm tr}(-y^{p-1})=$ ${\rm tr}(-y^{p-1}f).$
Write $-y^{p-1}f=\sum_{i=0}^{p-1} f_iy^i$ with $f_i\in k[\sigma]$, then the formulae
in Lemma \ref{cycl_p_lemma} (i) again show $f=-f_{p-1}$, so $k[y]^G=k[\sigma]$.
Finally
$D_k^G \otimes_{k[\sigma]}k[y]\cong$
$D_k^G \otimes_{k[\sigma]}\oplus_{i=0}^{p-1} k[\sigma] y^i\cong$
$\oplus_{i=0}^{p-1} D_k^G y^i=D_k$.
\end{proof}

\begin{cor}\label{D_k_poly_for_Cp}
$D_k^G(G)$ is isomorphic to a polynomial ring in $p-1$ variables.
\end{cor}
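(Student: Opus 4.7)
My plan is to exploit Proposition \ref{cycl_p_univ_model}, which gives $D_k = \bigoplus_{i=0}^{p-1} D_k^G y^i$ as a free $D_k^G$-module of rank $p$ via the Artin-Schreier relation $y^p - y = \sigma$. This immediately yields $\operatorname{Dim}(D_k^G) = \operatorname{Dim}(D_k) = p - 1$, so the task reduces to producing $p - 1$ algebraically independent generators of $D_k^G$ over $k$.

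I would first compute the reduction $D_k^G / \sigma D_k^G$ using the splitting $\sigma = \prod_{a \in \mathbb{F}_p}(y - a)$. Since the factors are pairwise comaximal (their pairwise differences are nonzero scalars), the Chinese Remainder Theorem gives $D_k / \sigma D_k \cong \prod_{a \in \mathbb{F}_p} D_k/(y - a)D_k$, where each factor is a polynomial ring in $p - 2$ variables (namely $k[x_2, \ldots, x_{p-1}]$, obtained by using $y = a$ to eliminate $x_1$). Since $g$ sends the ideal $(y - a)$ to $(y - (a+1))$, $G$ freely and transitively permutes the $p$ factors; the $G$-invariants therefore form a diagonal subring isomorphic to $D_k/(y)D_k \cong k[x_2, \ldots, x_{p-1}]$. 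Because $\sigma \in D_k^G$ is a non-zero-divisor and $D_k$ is $D_k^G$-free, the natural map $D_k^G/\sigma D_k^G \to (D_k/\sigma D_k)^G$ is an isomorphism (verified coefficient-wise in the basis $1, y, \ldots, y^{p-1}$), giving $D_k^G/\sigma D_k^G \cong k[\bar u_2, \ldots, \bar u_{p-1}]$.

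I would then lift each $\bar u_i$ to an invariant $u_i \in D_k^G$ (for instance, via the constant-in-$y$ coefficient $a_{i,0}$ of the basis expansion of $x_i$, which is computable from Lemma \ref{cycl_p_lemma}) and set $R := k[\sigma, u_2, \ldots, u_{p-1}] \subseteq D_k^G$. A $\sigma$-adic unravelling yields algebraic independence: any putative relation $F(\sigma, \underline u) = 0$, expanded as $F = \sum_{j \ge 0} \sigma^j F_j(\underline u)$, reduces modulo $\sigma$ to $F_0 \equiv 0$ in the polynomial ring $k[\bar u_2, \ldots, \bar u_{p-1}]$; dividing by the non-zero-divisor $\sigma$ and iterating kills every $F_j$. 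Hence $R$ is a polynomial ring in $p - 1$ variables.

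The main obstacle is showing $R = D_k^G$. I would introduce the auxiliary polynomial subring $D' := R[y] = k[u_2, \ldots, u_{p-1}, y] \subseteq D_k$ (of the same Krull dimension $p - 1$ as $D_k$) and prove $D' = D_k$; taking $G$-invariants of the Artin-Schreier extension $D' = R[Y]/(Y^p - Y - \sigma)$ over $R$ then yields $R = (D')^G = D_k^G$. For the equality $D' = D_k$, since $y$ is Artin-Schreier of degree $p$ over $\operatorname{Frac}(R)$, while $[\operatorname{Frac}(D_k):\operatorname{Frac}(D_k^G)] = p$, the multiplicativity of field degrees forces $\operatorname{Frac}(R) = \operatorname{Frac}(D_k^G)$, so the inclusion $D' \hookrightarrow D_k$ is birational; both are normal Noetherian domains of the same Krull dimension with matching reductions modulo $\sigma$, and one concludes via Zariski's main theorem (or via a degree-induction exploiting that $\sigma$ has positive total degree, so each iteration $f = r + \sigma f'$ with $r \in D'$ produces $f'$ of strictly smaller complexity) that $D' = D_k$, completing the proof that $D_k^G = k[\sigma, u_2, \ldots, u_{p-1}]$ is polynomial in $p - 1$ variables.
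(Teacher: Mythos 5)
Your first two steps are sound and genuinely different from the paper's argument: the CRT decomposition $D_k/\sigma D_k \cong \prod_{a\in\mathbb{F}_p} D_k/(y-a)D_k$, the identification of the $G$-invariants of this product with the ``diagonal'' copy of $D_k/(y)D_k\cong k[x_2,\cdots,x_{p-1}]$, and the isomorphism $D_k^G/\sigma D_k^G\cong (D_k/\sigma D_k)^G$ (which is cleanest to justify from Theorem \ref{first_main} (iii) and (iv), since $\sigma D_k\cap D_k^G=\sigma D_k^G$) are all correct, as is the $\sigma$-adic proof that $R=k[\sigma,u_2,\cdots,u_{p-1}]$ is a polynomial ring in $p-1$ variables. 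The genuine gap is the final step, $R=D_k^G$ (equivalently $D':=R[y]=D_k$), and none of the three tools you invoke closes it. The field-degree argument is circular: multiplicativity only yields $[{\rm Frac}(D_k):{\rm Frac}(R)(y)]=[{\rm Frac}(D_k^G):{\rm Frac}(R)]$, which forces nothing unless you already know ${\rm Frac}(D_k)={\rm Frac}(R)(y)$ --- and that is exactly the birationality you are trying to establish (a proper subfield ${\rm Frac}(R)\subsetneq{\rm Frac}(D_k^G)$ can perfectly well carry its own degree-$p$ Artin--Schreier extension generated by the same $y$). The ZMT-style conclusion also fails on the hypotheses you have: take $R_0=k[x]\subset D_0=k[x,\frac{1}{1+x}]$ with $\sigma=x$; these are normal Noetherian domains of the same dimension, with the same fraction field, and with isomorphic reductions modulo $x^n$ for every $n$ (so $D_0=R_0+\sigma^nD_0$ for all $n$), yet $R_0\ne D_0$. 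What is missing, and what you never establish, is that $D_k$ is \emph{integral} (equivalently module-finite) over $D'$; only then does ``normal $+$ birational'' give equality. Likewise the degree-induction needs, for each $f\in D_k$ of degree $d$, a representative $r\in D'$ of degree $\le d$ with $f-r\in\sigma D_k$; this degree control is not automatic, since lifting through the CRT isomorphism by Lagrange interpolation in $y$ raises degrees by $p-1$.

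The paper closes precisely this gap with an algebra-generation trick absent from your proposal: writing $x_0=\sum_{i=0}^{p-1}b_iy^i$ with $b_i\in D_k^G$ and $C:=k[\sigma,b_0,\cdots,b_{p-1}]$, each $x_0g^j=\sum_i b_i(y-j)^i$ lies in $C[y]$, so $D_k=k[x_0g^j\ |\ j]\subseteq C[y]=\oplus_{i=0}^{p-1}Cy^i\subseteq\oplus_{i=0}^{p-1}D_k^Gy^i=D_k$, and comparing coefficients in the free basis forces $C=D_k^G$; polynomiality then follows from $b_{p-1}=-1$, $b_{p-2}=0$ and the Krull-dimension count. If you wish to keep your structure, you must supply the analogous input tying $R[y]$ to the algebra generators of $D_k$: for instance, with your specific lifts $u_i=a_{i,0}$ (the constant coefficients of the $x_i$), one can recover the $b_j$ as $\mathbb{F}_p$-linear combinations of the $u_i$ by inverting a Vandermonde matrix --- but doing so uses the paper's facts $b_{p-1}=-1$, $b_{p-2}=0$ and $D_k^G=C$, i.e. it amounts to reinstating the paper's argument. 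Without some statement of this kind, the inclusion $R\subseteq D_k^G$ cannot be promoted to an equality.
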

\begin{proof}
If $p=2$, then $D_k=k[y]$ and therefore $D_k^G=k[sigma]$, so we can assume that $p>2$.
Set $\sigma:=y^p-y$, then
$D_k=D_k^G\otimes_{k[\sigma]} k[y]\cong \oplus_{i=0}^{p-1} D_k^Gy^i$.
Write $x_0=\sum_{i=0}^{p-1} b_iy^i$ with $b_i\in D_k^G$ and set
$C:=k[\sigma,b_i\ |\ i=0,\cdots,p-1]\le D_k^G.$
Since $yg=y-1$, $x_0 g\in C[y]$ and
$D_k=k[x_0{g^i} |\ g^i\in G]\le C[y]=\oplus_{i=0}^{p-1} C y^i\le\oplus_{i=0}^{p-1} D_k^G y^i=
D_k,$
hence $D_k^G=C.$ Moreover
$1={\rm tr}(x_0)=\sum_{i=0}^{p-1} b_i {\rm tr}(y^i)=-b_{p-1}$ and
$-b_{p-2}={\rm tr}(yx_0)=$
$$yx_0+(y-1)\cdot x_0 g+(y-2)\cdot x_0 {g^2}+\cdots+(y-(p-1))\cdot x_0 {g^{p-1}}=
y-y=0.$$
Hence $D_k^G=k[\sigma,b_0,\cdots,b_{p-3}]$ is a polynomial ring, as $D_k$ and $D_k^G$ have
Krull-dimension $p-1$.
\end{proof}

\begin{rem}\label{contrast_to_graded_case}
The result above is in marked contrast with the usual homogeneous algebra
$\mathrm{Sym}_k(kG)^G$, which is far more inscrutable (see for
example \cite{fssw} for the general construction of this ring of invariants).
\end{rem}
We obtain the following structure theorem for trace-surjective $G$-algebras with
prescribed ring of invariants:

\begin{thm}\label{struct_thm_Cp}
Let $G=\langle g\rangle\cong C_p$, $R$ an arbitrary commutative $k$-algebra
and $A\in \mathfrak{Ts}$ with $R=A^G$, then
$A\cong R[Y]/(Y^p-Y-\gamma)$ for suitable $\gamma\in R$ and $Yg=Y-1$.
\end{thm}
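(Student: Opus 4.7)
The plan is to mimic the construction of Proposition \ref{cycl_p_univ_model} inside the ambient algebra $A$. Since $A\in \mathfrak{Ts}$, Lemma \ref{tr} supplies a point $a\in A$ with $\tr(a)=1$; writing $a_i:=ag^i$ gives $\sum_{i=0}^{p-1}a_i=1$. Imitating the element $y\in D_k$ from the paragraph preceding Lemma \ref{cycl_p_lemma}, I set
\[
Y\;:=\;\sum_{i=1}^{p-1} i\cdot a_i\;\in\;A.
\]
The identical calculation (using $\sum a_i=1$ and $p=0$ in $k$) yields $Yg=Y-1$, and hence $(Y^p-Y)g=(Y-1)^p-(Y-1)=Y^p-Y$ by Frobenius. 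Thus $\gamma:=Y^p-Y\in A^G=R$, and the assignment $T\mapsto Y$ defines a $G$-equivariant $R$-algebra homomorphism $\phi\colon B:=R[T]/(T^p-T-\gamma)\to A$; the $G$-action $Tg=T-1$ on $B$ is well defined because $(T-1)^p-(T-1)=T^p-T\equiv\gamma$.

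Injectivity of $\phi$ is elementary. Given a relation $\sum_{i=0}^{p-1}r_iY^i=0$ with $r_i\in R$, applying $g^j$ for $j=0,\ldots,p-1$ produces the $p$ simultaneous relations $\sum_i r_i(Y-j)^i=0$. Their coefficient matrix $\bigl((Y-j)^i\bigr)_{0\le i,j\le p-1}$ is the Vandermonde in $Y,Y-1,\ldots,Y-(p-1)$, with determinant $\prod_{0\le j<k\le p-1}(j-k)$, a nonzero element of $\mathbb{F}_p\subseteq k^\ast\subseteq R^\ast$. Hence every $r_i$ vanishes.

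The main obstacle is surjectivity: one needs $R[Y]=A$, but the original point $a$ is not visibly a polynomial in $Y$. The trick is to exhibit a \emph{different} point of $A$ that already sits in $R[Y]$, namely $b:=-Y^{p-1}$. Specializing the polynomial identity $\sum_{j=0}^{p-1}(T-j)^{p-1}=-1$ of Lemma \ref{cycl_p_lemma}(i) at $T=Y$ yields $\tr(b)=-\sum_j(Y-j)^{p-1}=1$, so $b$ is a point of $A$. Appealing to the freeness theorem for ts-algebras over a $p$-group (Theorem \ref{first_main}; equivalently Corollary \ref{G_p_gr_ts_iff_gal} combined with Chase--Harrison--Rosenberg Galois theory), $A$ is free of rank one over $R[G]$ on any point, whence
\[
A\;=\;\sum_{i=0}^{p-1}R\cdot bg^i\;=\;\sum_{i=0}^{p-1}R\cdot\bigl(-(Y-i)^{p-1}\bigr)\;\subseteq\;R[Y].
\]
This forces $R[Y]=A$, showing $\phi$ is surjective, and the structure theorem follows.
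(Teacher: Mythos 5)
Your proof is correct, but it takes a genuinely different route from the paper. The paper disposes of this theorem in one line, as the case $n=1$ of the general Structure Theorem \ref{strct_thm_intro}, whose proof in Section \ref{poly_strct} runs through the universal machinery: the standard subalgebra $U=k[y]\le D_k$ with reflexive point $-y^{p-1}$ (Proposition \ref{cycl_p_univ_model}), the universality of standard subalgebras giving a $G$-epimorphism $U\to B\le A$, Theorem \ref{first_main}(iv) to see that $\ker$ is extended from invariants, and the tensor-product Lemmas \ref{struct_thm_lemma1} and \ref{struct_thm_lemma2} to assemble the presentation. You instead work entirely inside $A$: you transport the element $y=\sum i x_i$ of $D_k$ to $Y=\sum i\,a_i\in A$ (which is exactly the image of $y$ under the morphism of Proposition \ref{D_k_gen_1}, so the two constructions produce the same element), prove injectivity of $R[T]/(T^p-T-\gamma)\to A$ by an elementary Vandermonde argument (the determinant $\prod_{j<k}(j-k)$ lies in $\mathbb{F}_p^*\subseteq k^*$, hence is a unit of $A$ --- note it is $\mathbb{F}_p^*$, not $\mathbb{F}_p$, that sits in $k^*$), and prove surjectivity by exhibiting the point $b=-Y^{p-1}\in R[Y]$ via Lemma \ref{cycl_p_lemma}(i) and invoking rank-one freeness; here you should say explicitly that Lemma \ref{rec_free} (last assertion) makes $W=\langle bg^i\rangle_k\cong V_{reg}$ with $1=\tr(b)\in W$, so that Theorem \ref{first_main}(ii) applies with this $W$ --- the parenthetical appeal to Chase--Harrison--Rosenberg is unnecessary. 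What your approach buys is a short, self-contained Artin--Schreier theorem for rings, independent of $D_k$, reflexive points, and the cocycle argument; what it gives up is exactly what the paper's heavier route provides, namely an induction scheme (via the $1$-cocycle/coboundary argument in Theorem \ref{UG_is_poly}) that proves the statement for all finite $p$-groups at once rather than only for $C_p$.
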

\begin{proof}
This is the special case of Theorem \ref{strct_thm_intro} with $n=1$, which will be proved in Section
\ref{poly_strct}.
\end{proof}

\begin{cor}$[${\rm Artin-Schreier}$]$\label{artin_schreier}
Let ${\rm char}\ k=p>0$. Suppose that $\mathbb{L}$ is a Galois extension of $k$ with Galois-group $G:=C_p$, then
there is $\gamma\in k$, such that $\mathbb{L}=k[\alpha]$ with
$\alpha$ being a root of the irreducible polynomial $X^p-X+\gamma\in k[X].$\\
Conversely, if the polynomial $f:=X^p-X+\gamma\in k[X]$ is irreducible and $\alpha\in \overline{k}$ is a root of
$f$, $\mathbb{L}:=k(\alpha)$ is the splitting field of $f$ and is a Galois extension of $k$ with Galois-group $G:=C_p$.
\end{cor}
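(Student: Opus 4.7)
The plan is to deduce both directions essentially as immediate consequences of Theorem \ref{struct_thm_Cp}, together with a small amount of standard Galois theory. Throughout, let $G = \mathrm{Gal}(\mathbb{L}/k) \cong C_p$.

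For the forward direction, the first step is to verify that $\mathbb{L}$ is a trace-surjective $G$-algebra over $k$, so that Theorem \ref{struct_thm_Cp} applies with $R = \mathbb{L}^G = k$. This is a standard fact: since $\mathbb{L}/k$ is separable, the trace form is non-degenerate, so $\tr(\mathbb{L}) \ne 0$; since $\tr(\mathbb{L})$ is an ideal of the field $k$, it must equal $k$. Hence $\mathbb{L} \in \mathfrak{Ts}$, and Theorem \ref{struct_thm_Cp} yields an isomorphism $\mathbb{L} \cong k[Y]/(Y^p - Y - \delta)$ for some $\delta \in k$. Taking $\alpha$ to be the image of $Y$ and setting $\gamma := -\delta$, we get $\mathbb{L} = k[\alpha]$ with $\alpha^p - \alpha + \gamma = 0$. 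Irreducibility of $X^p - X + \gamma \in k[X]$ follows because $[\mathbb{L}:k] = |G| = p$, so the minimal polynomial of $\alpha$ has degree $p$ and divides $X^p - X + \gamma$, forcing equality.

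For the converse, suppose $f(X) := X^p - X + \gamma \in k[X]$ is irreducible and $\alpha \in \overline{k}$ is a root. The key observation is the additive symmetry of the Artin-Schreier polynomial: for every $i \in \mathbb{F}_p$,
\begin{equation*}
f(\alpha + i) = (\alpha + i)^p - (\alpha + i) + \gamma = \alpha^p + i - \alpha - i + \gamma = 0,
\end{equation*}
using $i^p = i$ in characteristic $p$. Hence $\{\alpha, \alpha + 1, \dots, \alpha + (p-1)\}$ are the $p$ distinct roots of $f$, so $\mathbb{L} := k(\alpha)$ is a splitting field of $f$; separability is automatic since $f'(X) = -1$, so $\mathbb{L}/k$ is Galois. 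By irreducibility $[\mathbb{L}:k] = p$, so $|\mathrm{Gal}(\mathbb{L}/k)| = p$ and this group is $C_p$ (explicitly generated by the automorphism $\alpha \mapsto \alpha + 1$).

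I do not expect any serious obstacle: the structure theorem has already absorbed the content, and the only nontrivial task is the quick verification that a Galois field extension lies in $\mathfrak{Ts}$. The mild subtlety is matching signs between the ``$-\gamma$'' appearing in Theorem \ref{struct_thm_Cp} and the ``$+\gamma$'' in the statement of the corollary, which is purely cosmetic.
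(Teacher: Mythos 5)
Your proposal is correct and takes essentially the same route the paper intends: the paper gives no separate proof of Corollary \ref{artin_schreier}, presenting it precisely as the immediate specialization of Theorem \ref{struct_thm_Cp} (itself the $n=1$ case of Theorem \ref{strct_thm_intro}) to $R=k$, which is exactly the deduction you carry out. The details you supply---trace-surjectivity of $\mathbb{L}$ via non-degeneracy of the trace form (the paper would instead invoke Proposition \ref{G_p_gr_ts_iff_gal}), the degree argument for irreducibility, the sign adjustment $\gamma=-\delta$, and the classical root-shifting computation for the converse---are all correct routine verifications of what the paper leaves implicit.
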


\begin{rem}
If $\beta\in \mathbb{L}$ is any root of the irreducible polynomial $X^p-X+\gamma\in k[X]$,
then $\beta-i$ with $i=0,\cdots,p-1$ are the conjugates and ${\rm tr}(\beta^{p-1})=-1$. It follows
from Lemma \ref{rec_free}, that the conjugate elements $(\beta-i)^{p-1}$, $i=0,\cdots,p-1$ are linearly independent
and therefore form a normal basis of $\mathbb{L}$ over $k$.
\end{rem}

\section{General trace-surjective $G$-algebras}\label{ts_alg}

\begin{thm}\label{first_main}
Let $A$ be a trace-surjective $G$-algebra. Let
$\{a_i\ |\ i\in I\}$ be a $k$-basis of the ring of invariants $A^G$
and $\{w_g\ |\ g\in G\}$, a basis of $W\le A$ (with
$w_g:=(w_1)g$ and $1\in W \cong V_{reg}$). Then the following hold:
\begin{enumerate}
\item For every $0\ne a\in A^G$ we have $aW \cong W\cong V_{reg}$.
\item $A=A^G\cdot W= \oplus_{i\in I}a_i\cdot W =\oplus_{g\in G}A^G\cdot w_g.$
In particular, $A$ is a free $A^G$-module and a free $A^G[G]$ module of rank one.
\item For every $G$-stable proper ideal $\frak{J}\unlhd A$ the quotient ring $\bar A:= A/\frak{J}$ is again a ${\rm ts}$-algebra, with
$(A/\frak{J})^G \cong A^G/\frak{J}^G$.
\item Every ideal of $A^G$ is contracted from a
$G$-stable ideal in $A$. Conversely every $G$-stable ideal of $A$ is extended from
an ideal of $A^G$. In other words the mappings $\mathcal{I}\mapsto A\mathcal{I}$ and
$\frak{J}\mapsto \frak{J}\cap A^G$ are inverse bijections on the sets of
ideals of $A^G$ and $G$-stable ones of $A$.
\end{enumerate}
\end{thm}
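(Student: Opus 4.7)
The plan is to treat the four claims as a pipeline. Claims (i) and (ii) build the free-module structure of $A$ over $A^G$ (and $A^G[G]$); (iii) falls out almost formally from Lemma \ref{tr}; (iv) synthesizes everything. Two properties of the $p$-group $G$ underlie the whole argument: (a) $V_{reg} \cong kG$ is indecomposable, with one-dimensional socle $V_{reg}^G$, which is essential in every nonzero submodule of any free $kG$-module (the infinite-rank case follows from the finitely generated one, since any cyclic submodule sits inside a finite direct sum of copies of $V_{reg}$); and (b) $kG$ is a Frobenius algebra, so free and injective $kG$-modules coincide.

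For (i), multiplication by $0 \ne a \in A^G$ yields a $kG$-linear surjection $\mu_a: W \twoheadrightarrow aW$. By the final line of the proof of Lemma \ref{tr}(iv), we may assume $W^G = k \cdot 1_A$. If $\ker \mu_a \ne 0$, socle-essentiality forces $W^G \subseteq \ker \mu_a$, i.e.\ $a \cdot 1_A = 0$, contradicting $a \ne 0$. Hence $\mu_a$ is a $kG$-isomorphism. For (ii), I would consider the $kG$-linear map
$$\phi:\ A^G \otimes_k W \to A,\quad a \otimes w \mapsto aw,$$
with trivial $G$-action on the left factor. Both sides are free $kG$-modules, and $\phi$ sends the $G$-invariant subspace $A^G \otimes W^G = A^G \otimes k \cdot 1_A$ bijectively onto $A^G$ via $a \otimes 1_A \mapsto a$. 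I then invoke the general lemma that a $kG$-linear map $\phi: F \to F'$ between free $kG$-modules which restricts to an isomorphism on $G$-invariants is itself an isomorphism. Injectivity follows from socle-essentiality in $F$; for surjectivity, $\phi(F) \cong F$ is free hence injective in $kG$-mod (property (b)), so it splits off in $F'$, and the complementary summand is a free $kG$-module with zero $G$-invariants, hence zero. This yields $A = \bigoplus_i a_i W = \bigoplus_g A^G w_g$ and exhibits $A$ as a free rank-one $A^G[G]$-module on the generator $w_1$.

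Claim (iii) is immediate. Any point $v$ of $A$ (so $\tr(v) = 1$) has image $\bar v$ satisfying $\tr(\bar v) = \overline{\tr(v)} = 1_{\bar A}$, so $\bar A := A/\mathfrak{J}$ is ts by Lemma \ref{tr}(i); moreover
$$\bar A^G \;=\; \tr(\bar A) \;=\; \overline{\tr(A)} \;=\; \overline{A^G} \;\cong\; A^G/(A^G \cap \mathfrak{J}) \;=\; A^G/\mathfrak{J}^G.$$

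For (iv), the decomposition from (ii) gives $A \mathcal{I} = \bigoplus_g \mathcal{I} w_g$ for any ideal $\mathcal{I} \unlhd A^G$, and any $G$-invariant element of this sum, since $G$ transitively permutes the $w_g$, must be of the form $b \cdot \sum_g w_g = b \cdot 1_A$ with $b \in \mathcal{I}$; hence $A \mathcal{I} \cap A^G = \mathcal{I}$. Conversely, let $\mathfrak{J} \unlhd A$ be $G$-stable and set $\mathcal{I} = \mathfrak{J}^G$. Pass to $\bar A = A/A\mathcal{I}$, which is ts by (iii) with $\bar A^G = A^G/\mathcal{I}$, and $\bar \mathfrak{J} = \mathfrak{J}/A\mathcal{I}$. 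A short lift argument shows $\bar \mathfrak{J} \cap \bar A^G = 0$: any $G$-invariant class in $\bar A$ is represented by some $a' \in A^G$ by (iii); if additionally it lies in $\bar \mathfrak{J}$, then $a' \in \mathfrak{J} \cap A^G = \mathcal{I}$, so the class vanishes. Now $\bar A$ is free as $kG$-module, so its socle $\bar A^G$ is essential and any nonzero $kG$-submodule meets $\bar A^G$ nontrivially. Hence $\bar \mathfrak{J} = 0$, i.e.\ $\mathfrak{J} = A\mathcal{I}$. The main obstacle throughout is the infinite-rank version of socle-essentiality in free $kG$-modules, which reduces to the classical artinian case via the cyclic-submodule argument noted in (a).
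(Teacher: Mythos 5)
Your proof is correct in all four parts, and it runs on the same engines as the paper's: essentiality of the invariants (the socle) in $kG$-modules of a $p$-group, injectivity of arbitrary free $kG$-modules, and Lemma \ref{tr}. One caveat on the second engine: the Frobenius property of $kG$ gives free $=$ injective only for finitely generated modules; for the infinite-rank case (which you need, since $A^G$ may be infinite-dimensional) one must invoke Bass's theorem that direct sums of injectives over a Noetherian ring are injective, exactly as the paper does. Within this toolkit your organization differs in two places. In (ii), instead of the paper's route --- directness of $\sum_i a_iW$ via Lemma \ref{observation1}, then splitting off the injective submodule $A^GW$ and killing the zero-invariant complement --- you prove a general criterion (a $kG$-map of free modules that is bijective on invariants is bijective) and apply it to the multiplication map $A^G\otimes_k W\to A$; since the criterion's proof consists of precisely the paper's two steps, this is a repackaging, though a tidy one that makes (i) a byproduct of (ii). The genuine divergence is the converse half of (iv): the paper extends a basis of $\frak{J}^G$ to a basis of $A^G$ and identifies $\frak{J}=\oplus_{a_i\in\frak{J}^G}a_iW$ by comparing each $a_iW\cong V_{reg}$ with its image in $A/\frak{J}$, whereas you pass to $\bar A=A/A\frak{J}^G$ and show that the image of $\frak{J}$ there meets $\bar A^G$ trivially, hence vanishes by essentiality. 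Your version avoids the module-by-module comparison and is arguably cleaner; it relies, correctly, on the fact from (iii) that invariant classes of $\bar A$ are represented by elements of $A^G$. Finally, a normalization you use silently in (iv) --- that an invariant element of $\oplus_g\mathcal{I}w_g$ equals $b\cdot\sum_g w_g=b\cdot 1_A$ --- requires rescaling so that $\tr(w_1)=1$; this is harmless and is the same choice the paper makes at the start of its own proof.
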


\begin{proof} We use the equivalent conditions of Lemma \ref{tr}.\\
(i):\ Since $W\cong kG$, we have $W^G \cong k$ and we can choose
the basis $\{w_g\ |\ g\in G\}$ such that $1 = \sum_{g\in G}\
(w_1)g = {\rm tr}(w_1)$. Assume $a\in A^G$ such that $aW\not\cong W$,
then $aW\cong W/X$ with $0\ne X\le W$, so $W^G={\rm tr}(W)\le X$ and
${\rm tr}(W/X) \le W^GX/X \le X/X=0$. Hence
$$a = a\cdot 1 = a\cdot {\rm tr}(w_1) = {\rm tr}(a\cdot w_1) =0.$$
(ii):\ Since for all $i$ we have $a_iW \cong W$, $(a_iW)^G=k\cdot a_i$.
It follows from Lemma \ref{observation1}, that
$A^G\cdot W = \oplus_{i\in I}k\cdot a_i\cdot W$ with each $a_i\cdot
W \cong V_{reg}$ and again this is an injective module by H. Bass'
theorem. Hence $A = A^G\cdot W \oplus C$ with some complementary
free $kG$ - module $C$. However $C^G \le A^G \le A^G\cdot W$,
since $1\in W$ and therefore $C^G=0$. Thus $C$ is a free
$kG$-module not containing an invariant, hence $C=0$.
This shows that $A^G\cdot W=A$, from
which the second direct sum decomposition immediately follows.\\
(iii):\ Let $\frak{J}\unlhd A$ be a $G$-stable proper ideal.
Then ${\rm tr}(\bar w_1) = \sum_{g\in G} \bar w_1 g = \bar 1$, so
$A/\frak{J}$  is a ${\rm ts}$-algebra. Let $\bar x\in (A/\frak{J})^G$,
then $\bar x = 1\cdot \bar x = {\rm tr}(w_1)\bar x = \sum_{g\in G} w_1g\cdot
\bar x =$
$\sum_{g\in G} w_1g\cdot \bar x g=\sum_{g\in G}
(\overline{w_1x})g = {\rm tr}(\overline{w_1x}) = \overline{{\rm tr}(w_1x)} \in
A^G/\frak{J}^G.$ Hence $A^G/\frak{J}^G = (A/\frak{J})^G$.\\
(iv): Let $\mathcal{I}\le A^G$ be an ideal and $x = \sum_\ell a_\ell
i_\ell\in A\mathcal{I}\cap A^G$. Then $x=1\cdot x = {\rm tr}(w_1x)=\sum_\ell {\rm tr}(w_1a_\ell i_\ell)=$
$\sum_\ell {\rm tr}(w_1a_\ell) i_\ell \in A^G\mathcal{I}=\mathcal{I},$ hence $A\mathcal{I}\cap A^G=\mathcal{I}$. Now let $\frak{J}\unlhd A$ be a
$G$-stable ideal, let $\overline{()}\ :\ A \to \overline A:=A/\frak{J}$ denote
the $G$-equivariant canonical epimorphism and choose the basis $\{a_i\ |\ i\in I\}$, such
that it extends a basis for $\frak{J}^G$ over $k$. Applying Theorem \ref{first_main} (i) to $A/\frak{J}$
and using \ref{first_main} (ii) we get $\frak{J}=\oplus_{a_i\in \frak{J}^G\atop i\in I} a_iW\subseteq \frak{J}^GA.$
\end{proof}

\begin{rem}\label{main_p_nc_rem}
The results in \ref{first_main} also hold for non-commutative $k$-algebras
and one or two-sided ideals, respectively. There is also a version of
\ref{first_main} for arbitrary finite groups $X$:
Let $B$ be a (not necessarily commutative) trace surjective $X$-algebra,
$\{a_i\ |\ i\in I\}$ a $k$-basis of $B^X$ and
$\{w_j\ |\ j=1,\cdots,s\}$ a basis of $W\le B$ with $1\in W
\cong P(k)$, the projective cover of the trivial $kX$-module.
Then
\begin{enumerate}
\item $B=B^X\cdot W \oplus C$ with $B^XW = \oplus_{i\in I}a_i\cdot W = \oplus_{j=1}^sB^X\cdot w_j$
and $C$ is a projective $kX$-module not containing a summand $\cong P(k)$.
In particular, $B^XW$ is a free $B^X$-module.
\item For every $X$-stable proper ideal $\frak{J}\unlhd\ _BB$ the $kX$-module $B/\frak{J}$ is projective
and we have $(B/\frak{J})^X \cong B^X/\frak{J}^X$. For every $X$-stable two-sided proper
ideal $I\unlhd B$ the quotient ring $\bar B:= B/I$ is again a trace-surjective
$X$-algebra.
\end{enumerate}
\end{rem}

\begin{df}\label{D_k}
Let $V_{reg}:=\oplus_{g\in G} kX_g$ be the regular $kG$-module.
Define:
$$D_k:=D_k(G):= {\rm Sym}_k(V_{reg})/(\alpha)$$
with $\alpha = -1+\sum_{g\in G} X_g.$
\end{df}
Setting $x_g:=X_g+(\alpha)$, it follows that $D_k\cong k[x_g\ |\ 1\ne g\in G]$
is a polynomial ring of Krull-dimension ${\rm Dim}(D_k)=|G|-1.$
Moreover, for $t:={\rm tr}(X_1)$ the map $$D_k\to ({\rm Sym}_k(V_{reg})[1/t])_0;\ x_g\mapsto X_g/t$$
is a $G$-equivariant isomorphism of $G$-algebras. In other words,
$D_k$ is the degree zero component of the homogeneous localization of
${\rm Sym}_k(V_{reg})$ at the $G$-invariant $t$, or the $G$-equivariant ``dehomogenization of
${\rm Sym}_k(kG)$" as described in \cite{BH} (Proposition 1.5.18 pg.38).
In particular ${\rm Sym}_k(V_{reg})[1/t]\cong D_k[t,1/t]$ and
$$({\rm Sym}_k(V_{reg})[1/t])^G\cong {\rm Sym}_k(V_{reg})^G[1/t]\cong D_k^G[t,1/t].$$

The algebra $D_k$ is ``universal" in the sense that every trace-surjective algebra
can be constructed from quotients of $D_k$ by extending invariants:

\begin{prp}\label{D_k_gen_1}
Let $A\in \mathfrak{Ts}$ with point $a\in A$, generating the
$kG$-subspace $W:=\langle ag\ |\ g\in G\rangle_k\le A$. Then there is an epimorphism
of $G$-algebras
$$\theta:\ D_k \to k[W]\le A$$ such that
$A = A^G\otimes_{k[W]^G} k[W]$. The algebras $D_k$ and $k[W]$ are
free $kG$ - modules.
\end{prp}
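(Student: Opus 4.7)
My plan has three steps, corresponding to the three assertions in the statement.

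First, to construct $\theta$. Since $a$ is a point, $\tr(a) = 1$, and the last statement of Lemma \ref{rec_free} gives that $W$ is a free $kG$-module of rank one. Hence the $k$-linear extension of $X_g \mapsto ag$ is a well-defined $kG$-module isomorphism $V_{reg} \cong W$, which I lift to a $G$-equivariant $k$-algebra homomorphism $\widetilde{\theta}\colon \mathrm{Sym}_k(V_{reg}) \to A$ with image $k[ag \mid g\in G] = k[W]$. The key observation is that $\widetilde{\theta}(\alpha) = -1 + \sum_{g\in G} ag = -1 + \tr(a) = 0$, so $\widetilde{\theta}$ descends to the desired $G$-equivariant surjection $\theta\colon D_k \to k[W]$.

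Second, to prove $A = A^G \otimes_{k[W]^G} k[W]$. Because $a \in k[W]$ remains a point, $k[W] \in \mathfrak{Ts}$, and moreover $1 = \tr(a) = \sum_{g\in G} ag$ lies in $W$. Thus Theorem \ref{first_main}(ii) applies to both $A$ and $k[W]$ with the same choice $w_g := ag$, giving $A = \bigoplus_{g\in G} A^G \cdot (ag)$ and $k[W] = \bigoplus_{g\in G} k[W]^G \cdot (ag)$; in particular $k[W]$ is free of rank $|G|$ over $k[W]^G$ on the basis $\{ag\}_{g\in G}$. Since $k[W]^G \subseteq A^G$, the multiplication map $A^G \otimes_{k[W]^G} k[W] \to A$ sends the $A^G$-basis $\{1 \otimes ag\}_{g\in G}$ of the left-hand side to the $A^G$-basis $\{ag\}_{g\in G}$ of the right-hand side and is therefore an isomorphism.

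Third, for freeness as $kG$-modules, I invoke Lemma \ref{tr}: a $G$-algebra is $kG$-free iff it possesses a point. For $k[W]$ the point is $a$ itself. For $D_k$, the defining relation $\alpha = 0$ rewrites as $\sum_{g\in G} x_g = 1$, and the $G$-action on $D_k$ inherited from $V_{reg}$ satisfies $x_1 \cdot g = x_g$, whence $\tr(x_1) = \sum_{g\in G} x_g = 1$; thus $x_1$ is a point of $D_k$, and $D_k$ is $kG$-free. I do not anticipate serious obstacles here: the only subtle point is verifying $1 \in W$ (needed in order to apply Theorem \ref{first_main}(ii)), and this falls out immediately from $1 = \tr(a) = \sum_{g\in G} ag \in W$.
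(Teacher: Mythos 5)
Your proposal is correct and follows essentially the same route as the paper: both construct $\theta$ by lifting $X_g\mapsto ag$ to a $G$-equivariant homomorphism on $\mathrm{Sym}_k(V_{reg})$ and observing that $\alpha$ maps to $-1+\tr(a)=0$, and both obtain $A\cong A^G\otimes_{k[W]^G}k[W]$ by comparing the free-module decompositions $A=\oplus_{g\in G}A^G\cdot(ag)$ and $k[W]=\oplus_{g\in G}k[W]^G\cdot(ag)$ from Theorem \ref{first_main}(ii). Your explicit verification that $1=\tr(a)\in W$ and your use of Lemma \ref{tr} (points $a$ and $x_1$) for the $kG$-freeness of $k[W]$ and $D_k$ are only minor expansions of steps the paper leaves implicit.
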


\begin{proof} Let $S$ be the subalgebra $S:=k[W]\le A$.
Then by Theorem \ref{first_main}, $S =S^G\cdot W\cong \oplus_{g\in G}S^G w_g$ is a free
$kG$-module. Let $\phi$ be the canonical extension of the map $X_g\mapsto w_g$
to a $k$-algebra homomorphism of ${\rm Sym}_k(V_{reg})$ onto $S$, then clearly the ideal
$(\alpha)$ is contained in ${\rm ker}(\phi)$, so $S$ is a quotient of $D_k$. Moreover
$$A^G\otimes_{S^G}S \cong \oplus_{g\in G} A^G \otimes_{S^G} S^G w_g
\cong \oplus_{g\in G} A^G w_g = A.$$
\end{proof}

Note that the $W$ in Proposition \ref{D_k_gen_1} can be chosen to be any submodule
isomorphic to $V_{reg}$ with $1_A\in W$, and the subalgebra $k[W]\le A$ is isomorphic to
$D_k/I'$ with a suitable $G$-stable ideal $I'\unlhd D_k$. Conversely,
for any $G$-stable ideal $I\unlhd D_k$ with quotient $S:=D_k/I$, and any extension
of commutative $k$-algebras $T\ge S^G$, the tensor product
$A:=T\otimes_{S^G}S$, with $G$-action only on the right tensor factor,
is a trace-surjective $G$-algebra with $A^G=T$.
In other words, $A$ is an ``extension by invariants" of a ``cyclic" ${\rm ts}$-algebra
of the form $D_k/I$ with $I\unlhd D_k$ a $G$-stable ideal.

We end this section by pointing out a connection between the concepts of
commutative trace-surjective algebras and Galois extensions of
commutative rings in the sense of \cite{chr}.
Let $\Gamma$ be an arbitrary finite group, $A$ be a $\Gamma$-algebra, $R:=A^\Gamma$ and let $\Gamma*A$ be the trivially crossed
group ring, i.e. the ``semidirect product" $\Gamma\times A$ with multiplication
$ga \cdot g'a':=gg' (a)g'a'$.
Note that $\Gamma*A$ acts naturally on $A$ by $R$-algebra homomorphisms, giving rise
to a homomorphism
$$j:\ \Gamma*A\to {\rm End}_R(A),\ ga\mapsto (a'\mapsto (aa')g^{-1}).$$

\begin{thm}(Chase-Harrison-Rosenberg \cite{chr})\label{chase_harrison_rosenberg}
The following statements are equivalent:
\begin{enumerate}
\item There are elements $x_1,\cdots,x_n$, $y_1,\cdots,y_n$ in $A$, such that
$\sum_{i=1}^n x_i(y_i)\sigma=\delta_{1,\sigma}\ \forall\sigma\in \Gamma.$
\item $A$ is a finitely generated projective $R$-module and
$j:\ \Gamma*A\to {\rm End}_R(A)$ is an isomorphism.
\item For every $1\ne \sigma\in \Gamma$ and maximal ideal ${\rm p}$ of $A$ there
is $a\in A$ with $a-(a)\sigma\not\in{\rm p}$.
\end{enumerate}
\end{thm}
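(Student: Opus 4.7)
The plan is to prove the cyclic implication $(\mathrm{i})\Rightarrow(\mathrm{ii})\Rightarrow(\mathrm{iii})\Rightarrow(\mathrm{i})$, with the last step being the main obstacle.

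For $(\mathrm{i})\Rightarrow(\mathrm{ii})$, I would first specialize $\sigma=1_\Gamma$ in the Galois identity to obtain $\sum_i x_iy_i=1_A$, and then define $R$-linear forms $f_i\in\mathrm{Hom}_R(A,R)$ by $f_i(a):=\mathrm{tr}(y_ia)=\sum_{\sigma\in\Gamma}(y_ia)\sigma$. The direct computation
\[
\sum_i x_if_i(a)=\sum_\sigma (a)\sigma\sum_i x_i(y_i)\sigma=\sum_\sigma(a)\sigma\cdot\delta_{1,\sigma}=a
\]
exhibits $\{(x_i,f_i)\}$ as a dual basis, so $A$ is finitely generated projective over $R$. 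For the isomorphism $j$, I would write down an explicit inverse: for $\varphi\in\mathrm{End}_R(A)$, set
\[
\xi_\varphi:=\sum_{\tau\in\Gamma}\tau\cdot\Bigl(\sum_i(\varphi(x_i))\tau\cdot y_i\Bigr)\in\Gamma*A,
\]
and verify via the same dual-basis identity that $j(\xi_\varphi)=\varphi$. Injectivity then follows from a rank count (or from the observation that $j(\xi)=0$ together with the surjectivity of the associated map forces $\xi=0$).

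For $(\mathrm{ii})\Rightarrow(\mathrm{iii})$ I would argue by contradiction. Suppose some $\sigma\neq 1_\Gamma$ and maximal ideal $\mathfrak{p}\unlhd A$ satisfy $a-(a)\sigma\in\mathfrak{p}$ for all $a\in A$. Then $1_\Gamma$ and $\sigma$ induce identical $R$-module maps $A\to A/\mathfrak{p}$, so composing $j$ with the projection $\mathrm{End}_R(A)\to\mathrm{Hom}_R(A,A/\mathfrak{p})$ yields equal images of two distinct elements of $\Gamma*A$. Using that $A$ is finitely generated projective over $R$, a rank argument at the residue field of $\mathfrak{q}:=\mathfrak{p}\cap R$ — essentially Dedekind's lemma on linear independence of characters applied to the $(R_\mathfrak{q}/\mathfrak{q} R_\mathfrak{q})$-algebra maps $A\to A_\mathfrak{p}/\mathfrak{p}A_\mathfrak{p}$ induced by the various $\sigma\in\Gamma$ — contradicts the isomorphism property of $j$ after base change.

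For $(\mathrm{iii})\Rightarrow(\mathrm{i})$, which I expect to be the technical heart of the proof: for each $\sigma\neq 1_\Gamma$ the ideal $J_\sigma:=\bigl(\{a-(a)\sigma\mid a\in A\}\bigr)\unlhd A$ lies in no maximal ideal by $(\mathrm{iii})$, hence $J_\sigma=A$, producing identities $1=\sum_j c_j^{(\sigma)}\bigl(b_j^{(\sigma)}-(b_j^{(\sigma)})\sigma\bigr)$ with finite collections depending on $\sigma$. I would then reformulate $(\mathrm{i})$ as the surjectivity of the $R$-algebra map
\[
\Phi:A\otimes_R A\longrightarrow\prod_{\sigma\in\Gamma}A,\qquad a\otimes b\longmapsto\bigl(a\cdot(b)\sigma\bigr)_{\sigma\in\Gamma},
\]
so that a Galois system is precisely a pre-image of $(1,0,\ldots,0)$. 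Using the identities above, I would construct, for each $\sigma\neq 1_\Gamma$, an element $e_\sigma\in A\otimes_R A$ whose $\sigma$-coordinate under $\Phi$ vanishes while its $1_\Gamma$-coordinate equals $1$, and then merge these by forming a judicious product (or by induction over an enumeration of $\Gamma\setminus\{1_\Gamma\}$ combined with Chinese-Remainder-style splitting of the target). The combinatorial bookkeeping that guarantees all off-diagonal coordinates are killed simultaneously by a single element — so that one uniform finite family $\{(x_i,y_i)\}$ services every $\sigma$ at once — is the principal difficulty.
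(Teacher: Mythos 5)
First, a point of reference: the paper does not prove this theorem at all --- it is quoted from the classical memoir \cite{chr} and used as a black box (only condition (iii) and the easy consequence $\tr(A)=A^\Gamma$ are invoked later, e.g.\ in Proposition \ref{G_p_gr_ts_iff_gal} and Corollary \ref{alg_grps}). So your proposal can only be compared with the classical Chase--Harrison--Rosenberg argument, which it follows in outline: the cycle (i)$\Rightarrow$(ii)$\Rightarrow$(iii)$\Rightarrow$(i) is the standard one, and your (i)$\Rightarrow$(ii) is correct --- the dual basis $\{(x_i,f_i)\}$ with $f_i(a)=\tr(y_ia)$ is exactly the classical device, and your formula $\xi_\varphi=\sum_{\tau}\tau\bigl(\sum_i(\varphi(x_i))\tau\cdot y_i\bigr)$ does satisfy $j(\xi_\varphi)=\varphi$. (For injectivity of $j$ you do not need a rank count: if $j\bigl(\sum_\tau\tau a_\tau\bigr)=0$, evaluate at $(y_ib)\tau_0$, multiply by $x_i$, sum over $i$ and use the Galois identity to get $(a_{\tau_0})\tau_0^{-1}b=0$ for all $b$, whence $a_{\tau_0}=0$.)

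There are, however, two soft spots. The step you flag as ``the principal difficulty'' in (iii)$\Rightarrow$(i) has a one-line resolution you missed: since $A$ is \emph{commutative}, each coordinate map $\Phi_\tau\colon A\otimes_RA\to A$, $a\otimes b\mapsto a\cdot(b)\tau$, is a homomorphism of \emph{rings}, not just of $R$-modules. Given, for each $\sigma\ne 1$, the relation $1=\sum_jc_j\bigl(b_j-(b_j)\sigma\bigr)$ coming from $J_\sigma=A$, set $e_\sigma:=1\otimes 1+\sum_jc_j\otimes b_j-\sum_jc_jb_j\otimes 1$, so that $\Phi_1(e_\sigma)=1$ and $\Phi_\sigma(e_\sigma)=0$; then the single element $e:=\prod_{\sigma\ne1}e_\sigma$ satisfies $\Phi_\tau(e)=\prod_{\sigma\ne1}\Phi_\tau(e_\sigma)$, which is $1$ for $\tau=1$ and vanishes for every $\tau\ne1$ because the factor $\Phi_\tau(e_\tau)$ is zero. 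Writing $e=\sum_ix_i\otimes y_i$ gives the Galois system at once; no bookkeeping or Chinese-remainder induction is needed. Secondly, your (ii)$\Rightarrow$(iii) is not yet a proof, and Dedekind's lemma is the wrong tool: linear independence of \emph{distinct} characters cannot be applied when the hypothesis to be refuted is precisely that two characters coincide. What does work is a spanning-plus-dimension argument: projectivity makes $\mathrm{Hom}_R(A,-)$ exact, so composing $j$ with the quotient $\pi\colon A\to L:=A/\mathfrak{p}$ shows that $\mathrm{Hom}_R(A,L)$ is spanned over $L$ by the maps $\bar\rho\colon a\mapsto\pi\bigl((a)\rho\bigr)$, $\rho\in\Gamma$; localizing the isomorphism $j$ at $\mathfrak{q}=\mathfrak{p}\cap R$ and comparing ranks gives $\mathrm{rank}(A_{\mathfrak{q}})=|\Gamma|$, hence $\dim_L\mathrm{Hom}_R(A,L)=|\Gamma|$; but if $\bar1=\bar\sigma$ then $\bar\rho=\overline{\rho\sigma}$ for every $\rho$, so the spanning set has fewer than $|\Gamma|$ distinct elements --- a contradiction by counting alone. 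With these two repairs your proof is complete and is essentially the argument of \cite{chr}.
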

If $R=A^\Gamma\le A$ satisfies any of these conditions, then the ring extension $R\le A$ is
called a \emph{Galois-extension}. It follows easily from condition (i) in
Theorem \ref{chase_harrison_rosenberg}, that
$\sum_{i=1}^n x_i{\rm tr}(y_i)=1$, which implies ${\rm tr}(A)=R$ (see \cite{chr} Lemma 1.6),
so every Galois extension is trace-surjective. For arbitrary finite groups, not
every trace-surjective algebra needs to be a Galois-extension, but in the
case of a finite $p$-group $G$ in characteristic $p$, we have:

\begin{prp}\label{G_p_gr_ts_iff_gal}
Let $A$ be a $G$-algebra. Then $A$ is trace-surjective, if and only if $A^G\le A$ is
a Galois-extension (in the sense of \cite{chr}).
\end{prp}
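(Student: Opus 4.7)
\emph{Proof plan.} The ``only if'' direction is immediate from the discussion preceding the statement: condition~(i) of Theorem~\ref{chase_harrison_rosenberg} gives elements $x_i,y_i\in A$ with $\sum_i x_i\cdot\mathrm{tr}(y_i)=1$, so $A^G=\mathrm{tr}(A)$ and $A$ is trace-surjective. The real work is the converse, and my plan is to verify condition~(iii) of Theorem~\ref{chase_harrison_rosenberg} in a very concrete way: for each $1\ne\sigma\in G$ I will exhibit an element $b\in A$ with $b-(b)\sigma=1$, which trivially lies outside every maximal ideal.

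The key (and essentially only) observation is that trace-surjectivity descends to subgroups. By Lemma~\ref{tr}, $A$ is free as a $kG$-module; since $kG$ is free as a $kH$-module for every $H\le G$, $A$ is free as a $kH$-module as well, and Lemma~\ref{tr} applied to $H$ then shows that $A$ is an $H$-ts algebra. Taking $H:=\langle\sigma\rangle$ and $m:=|H|=p^k$, I can therefore pick $a\in A$ with
$$\mathrm{tr}^{H}(a)=\sum_{i=0}^{m-1}(a)\sigma^{i}=1,$$
and I define
$$b:=\sum_{i=0}^{m-1} i\cdot(a)\sigma^{i}\in A.$$

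A short telescoping computation then finishes the argument. Writing $(b)\sigma=\sum_{i=0}^{m-1}i\,(a)\sigma^{i+1}$, reindexing by $j:=i+1\bmod m$ and using that $m-1\equiv-1$ in $k$ (here enters the fact that $m$ is a power of $p$), one gets
$$(b)\sigma=-a+\sum_{j=1}^{m-1}(j-1)(a)\sigma^{j},$$
and subtracting yields
$$b-(b)\sigma = a+\sum_{i=1}^{m-1}\bigl(i-(i-1)\bigr)(a)\sigma^{i} = \sum_{i=0}^{m-1}(a)\sigma^{i} = \mathrm{tr}^{H}(a) = 1,$$
as required. There is no genuine obstacle: once one notices that trace-surjectivity restricts along the $p$-subgroup $\langle\sigma\rangle$, the element $b$ is essentially an Artin--Schreier witness forced by the normal-basis structure of $A$ as a free $k\langle\sigma\rangle$-module. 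The characteristic-$p$ and $p$-group hypotheses enter only through this reduction and the identity $m-1\equiv-1$ in the telescoping; both would fail for a general finite group, consistent with the fact that the equivalence breaks in that generality.
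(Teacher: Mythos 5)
Your proof is correct, but it follows a genuinely different route from the paper's. Both arguments take the ``only if'' direction from the remark following Theorem \ref{chase_harrison_rosenberg} and then verify condition (iii) of that theorem for the converse, but they do so in different ways. The paper reduces the equation $1={\rm tr}(a_1)$ modulo an arbitrary prime ${\mathfrak P}\in{\rm Spec}(A)$ and factors the trace through the decomposition group $G_{\mathfrak P}$ and the inertia group $T_{\mathfrak P}$, arriving at $1_L=|T_{\mathfrak P}|\cdot t_{T_{\mathfrak P}}^{G_{\mathfrak P}}(1_C\tilde s)$ in the residue field; since $T_{\mathfrak P}$ is a $p$-group and ${\rm char}(L)=p$, this forces $T_{\mathfrak P}=1$, i.e.\ the extension is unramified, which is exactly condition (iii). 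You instead restrict trace-surjectivity to $H=\langle\sigma\rangle$ --- a legitimate step: $A$ is $kG$-free by Lemma \ref{tr}, hence $kH$-free, hence $H$-trace-surjective (one can also see this without freeness, since ${\rm tr}^{(G)}(a)={\rm tr}^{(H)}\bigl(\sum_{r}ar\bigr)$ for a transversal of the cosets $Hr$) --- and then exhibit the explicit element $b=\sum_{i}i\,(a)\sigma^{i}$ satisfying $b-(b)\sigma={\rm tr}^{(H)}(a)=1$, a unit, which witnesses condition (iii) for all maximal ideals simultaneously; your telescoping computation is correct, and indeed it is the same calculation that gives $yg=y-1$ at the start of Section \ref{ex_cp}, so the key ingredient already appears in the paper, just deployed for a different purpose. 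As for what each approach buys: the paper's argument is shorter given the ramification vocabulary and identifies the conceptual content of the statement (trace-surjectivity forces the $G$-action to be unramified at every prime); yours is more elementary and constructive, avoiding residue fields, decomposition groups and relative transfer maps altogether, and it isolates precisely where the hypotheses enter, namely the identity $p^{k}-1\equiv -1$ in $k$, producing an explicit Artin--Schreier-type witness.
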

\begin{proof}
It remains to show that a ${\rm ts}$-algebra is a Galois-extension and by
Theorem \ref{chase_harrison_rosenberg} (iii) it suffices to show that
$A^G\hookrightarrow A$ is unramified.\\
Let ${\mathfrak P}\in {\rm Spec}(A)$ with residue class
field $L:= {\rm Quot}(A/{\mathfrak P})$, decomposition group
$G_{\mathfrak P}:= {\rm Stab}_G({\mathfrak P})$, inertia group
$T_{\mathfrak P}:= \{g\in G_{\mathfrak P}\ |\ g\ {\rm acts\ trivially\ on\ L}\} \unlhd G_{\mathfrak P}$
and $C:=A/{\mathfrak P}$.
Let $1_L$ be the unit in $L$, then
$1_L=1_C=1_C\cdot 1_{A}=1_C\cdot {\rm tr}(a_1)=$
$$1_C\sum_{x\in G/G_{\mathfrak P}}\sum_{y\in G_{\mathfrak P}} (a_1)yx =
1_C\sum_{x\in G/G_{\mathfrak P}}\sum_{y\in G_{\mathfrak P}} (a_1)x^{-1}y^{-1}=
1_C\sum_{y\in G_{\mathfrak P}} (\tilde s)y = \tr^{(G_{\mathfrak P})}(1_C\tilde
s)=$$
$$t_{T_{\mathfrak P}}^{G_{\mathfrak P}}\circ
\tr^{(T_{\mathfrak P})}(1_C\tilde s)=|T_{\mathfrak P}|\cdot t_{T_{\mathfrak P}}^{G_{\mathfrak P}}(1_C\tilde s),$$
so we conclude that $T_{\mathfrak P}=1$.
(Here $t_{T_{\mathfrak P}}^{G_{\mathfrak P}}$ denotes an obvious ``relative transfer map").
\end{proof}

\begin{cor}\label{alg_grps}
Let $k$ be algebraically closed and $X$ an affine algebraic group with finite
subgroup $\Gamma$. Then the ring $k[X]$ of regular functions is a trace-surjective
$\Gamma$-algebra.
\end{cor}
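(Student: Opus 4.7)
My plan is to verify condition (iii) of Theorem \ref{chase_harrison_rosenberg} for the $\Gamma$-algebra $A := k[X]$; this will show that $A^\Gamma \subseteq A$ is a Galois extension, and then the observation following that theorem (which rests on \cite{chr}, Lemma 1.6) will give $\tr(A) = A^\Gamma$, proving that $A$ is a trace-surjective $\Gamma$-algebra.

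The key geometric input is that $\Gamma$ acts on $X$ freely by right translation: the equation $x\sigma = x$ in the group $X$ immediately forces $\sigma = 1_X$. Consequently, for every $x \in X$ and every $1 \neq \sigma \in \Gamma$, the points $x$ and $x\sigma^{-1}$ are distinct. Because $k$ is algebraically closed and $A$ is a finitely generated $k$-algebra, the Nullstellensatz identifies $\mathrm{Max}(A)$ with the set $X(k)$ of closed points, and $A$ separates these points.

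Given this setup, fix $1 \neq \sigma \in \Gamma$ and an arbitrary maximal ideal $\mathfrak{p} \unlhd A$. Write $\mathfrak{p} = \mathfrak{m}_x$ for the unique closed point $x$ to which it corresponds. Since $x \neq x\sigma^{-1}$ and $A$ separates points, there exists $a \in A$ with $a(x) = 0$ and $a(x\sigma^{-1}) \neq 0$. Using the standard convention $(a\sigma)(y) := a(y\sigma^{-1})$, which makes the action on $A$ into a right action lifting the right translation on $X$, one then has $a \in \mathfrak{p}$, while $(a\sigma)(x) = a(x\sigma^{-1}) \neq 0$ forces $a\sigma \notin \mathfrak{p}$. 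Hence $a - a\sigma \notin \mathfrak{p}$, verifying condition (iii).

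I do not foresee any substantive obstacle; the argument is essentially a direct translation of the freeness of the translation action into the Chase-Harrison-Rosenberg criterion, with the Nullstellensatz bridging between maximal ideals of $k[X]$ and the closed points of $X$. The only minor bookkeeping issue is fixing conventions for how the right translation on $X$ induces a right action on $A$, but this is routine once one chooses a side.
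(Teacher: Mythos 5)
Your proposal is correct and takes essentially the same route as the paper: the paper likewise observes that the right translation action of $\Gamma$ on the closed points of $X$ is fixed-point free and then invokes condition (iii) of Theorem \ref{chase_harrison_rosenberg} together with the remark following it (that Galois extensions are trace-surjective). You have simply made explicit the Nullstellensatz and point-separation details that the paper leaves implicit.
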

\begin{proof}
The right regular action of $\Gamma$ on $X={\rm max-Spec}(k[X])$ is fixed point free, hence the claim follows from Theorem \ref{chase_harrison_rosenberg} (iii) and the remark after that Theorem (or Proposition \ref{G_p_gr_ts_iff_gal}, if $\Gamma$ is a $p$-group).
\end{proof}

\begin{rem}\label{Group_functors}
There is a more general version of Corollary \ref{alg_grps}: assume that $\Gamma$ is realized as a
subgroup scheme of an affine $k$-group scheme $X$, then the algebra $k[X]$
is also trace-surjective. This follows from the fact that in this situation
$k[X]$ is an injective module of $\Gamma$ as $k$-functor (\cite{jcj} 5.5.(6),5.13 and 4.12).
The description of injective modules in \cite{jcj} 3.10 shows that $k[X]$ is a
projective $k\Gamma$-module. It is easy to see that Lemma \ref{tr} is valid, if
$G$ is replaced by $\Gamma$ and ``free" by ``projective", hence it follows that
$k[X]$ is a trace-surjective $\Gamma$-algebra.
\end{rem}

\section{Standard subalgebras}\label{std_sub}


Let $A$ be a trace-surjective $G$-algebra; then as seen in Proposition \ref{D_k_gen_1},
$A\cong A^G\otimes_{(D_k/I)^G} D_k/I$ with some $G$-stable ideal
$I\unlhd D_k$. In that sense $D_k$ is a ``universal model" for trace-surjective
$G$-algebras. However, if $S$ is any trace-surjective subalgebra
$S\le D_k$, then $D_k\cong D_k^G\otimes_{S^G} S$, hence
$$A\cong A^G\otimes_{(S/J)^G} S/J$$ with $S/J\le A$ and $G$-stable ideal
$J\unlhd S$, so $S$ is also ``universal". The subalgebras $S\le D_k$ which are also
retracts turn out to be particularly useful. This motivates the following

\begin{df}\label{standard_universal}
A trace-surjective  $G$-subalgebra $U\le D_k$ will be called {\bf standard},
if $D_k=U\oplus J$, where $J$ is some $G$-stable ideal.
\end{df}

If $U\le D_k$ is standard, then there is a projection morphism
$\chi:\ D_k\to U\hookrightarrow D_k$, which is an idempotent $k$-algebra endomorphism of $D_k$.
More precisely:

\begin{lemma}\label{endo_krit_standard}
Let $U\le D_k$ be a trace-surjective  $G$-algebra, then the following are equi\-valent:
\begin{enumerate}
\item $U$ is standard.
\item $\exists\ \chi=\chi^2\in ({\rm End}_{k-{\rm alg}}(D_k))^G$ with $U=\chi(D_k)$.
\item $\exists\chi\in ({\rm End}_{k-{\rm alg}}(D_k))^G$ with
$\chi^2(x_1)=\chi(x_1)=:w\in U=k[wg\ |g\in G]$.
\item $\exists\ w=W(x_1,x_{g_2},\cdots,x_{g_{|G|}})\in U$ with ${\rm tr}(w)=1$,
$w=W(w,wg_2,\cdots,w_{g_{|G|}})$ and $U=k[wg\ |g\in G]\le D_k.$
\end{enumerate}
\end{lemma}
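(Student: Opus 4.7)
The plan is to run the cycle $(\mathrm{i}) \Leftrightarrow (\mathrm{ii}) \Rightarrow (\mathrm{iii}) \Rightarrow (\mathrm{iv}) \Rightarrow (\mathrm{ii})$, throughout tying the endomorphism $\chi$ to the element $w := \chi(x_1)$ and leveraging the defining relation $\sum_{g \in G} x_g = 1$ of $D_k$.

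For $(\mathrm{i}) \Leftrightarrow (\mathrm{ii})$, I would invoke the standard idempotent--direct-summand dictionary: given $D_k = U \oplus J$ with $J$ a $G$-stable ideal, the projection onto $U$ factors as $D_k \twoheadrightarrow D_k/J \cong U \hookrightarrow D_k$, so it is simultaneously a $k$-algebra endomorphism and $G$-equivariant; conversely, any $G$-equivariant algebra idempotent $\chi$ produces the splitting $D_k = \chi(D_k) \oplus \ker(\chi)$ with $\ker(\chi)$ automatically a $G$-stable ideal.

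For $(\mathrm{ii}) \Rightarrow (\mathrm{iii})$ I would set $w := \chi(x_1)$; $G$-equivariance forces $\chi(x_g) = wg$, so $\chi(D_k) = k[wg \mid g \in G] = U$, and $\chi^2(x_1) = \chi(x_1)$ is immediate. For $(\mathrm{iii}) \Rightarrow (\mathrm{iv})$, I would apply $\chi$ to the relation $1 = \sum_g x_g$ to obtain $\tr(w) = 1$, and apply $\chi$ to the polynomial expression $w = W(x_1, x_{g_2}, \ldots, x_{g_{|G|}})$ to obtain $\chi(w) = W(w, wg_2, \ldots, wg_{|G|})$; the hypothesis $\chi^2(x_1) = \chi(x_1)$ reads precisely as $\chi(w) = w$, which yields the self-similarity.

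For the closing step $(\mathrm{iv}) \Rightarrow (\mathrm{ii})$, I would define $\chi$ as the unique $k$-algebra homomorphism ${\rm Sym}_k(V_{reg}) \to D_k$ with $X_g \mapsto wg$. The main obstacle --- really the only substantive verification in the lemma --- is that this descends to $D_k = {\rm Sym}_k(V_{reg})/(\alpha)$, which is exactly what the hypothesis $\tr(w) = 1$ delivers, since $\chi(\alpha) = -1 + \sum_g wg = -1 + \tr(w) = 0$. $G$-equivariance follows from $\chi(x_g \cdot h) = w(gh) = (wg) \cdot h$, and idempotency reduces to $\chi(w) = w$, which is precisely the self-similarity hypothesis; then $\chi^2(x_g) = \chi(wg) = \chi(w)g = wg = \chi(x_g)$ on generators forces $\chi^2 = \chi$, and $\chi(D_k) = k[wg \mid g \in G] = U$, giving (ii).
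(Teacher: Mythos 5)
Your proposal is correct and follows essentially the same route as the paper: the projection-along-a-$G$-stable-ideal argument for (i)$\Leftrightarrow$(ii), equivariance forcing $\chi(x_g)=wg$ for (ii)$\Rightarrow$(iii), applying $\chi$ to $1=\sum_g x_g$ and to a polynomial expression for $w$ in (iii)$\Rightarrow$(iv), and finally building the endomorphism $x_g\mapsto wg$ from $\tr(w)=1$ and using the self-similarity $w=W(w,wg_2,\cdots,wg_{|G|})$ to get idempotency. The only difference is cosmetic: you close the cycle at (ii) via the idempotent--summand dictionary, whereas the paper closes it at (i) by observing $\theta|_U={\rm id}$ and splitting $D_k=U\oplus\ker(\theta)$ directly; these are the same argument.
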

\begin{proof} ``{\rm (i)} $\Rightarrow$ {\rm (ii)}": If $D_k=U\oplus I$, choose
$\chi$ to be the canonical projection $\chi:\ D_k\to U.$ \\
``{\rm (ii)} $\Rightarrow$ {\rm (iii)}": $U=\chi(D_k)= \chi(k[x_g\
|g\in G])=k[\chi(x_1)g\ |g\in G]=k[wg\ |g\in G].$\\
``{\rm (iii)} $\Rightarrow$ {\rm (iv)}": Clearly
$\chi^2(x_1)=\chi(x_1)$ implies $\chi^2=\chi$. Hence $w:=\chi(x_1)=
\chi^2(x_1)=\chi(w)=\chi(W(x_1,x_{g_2},\cdots,x_{g_{|G|}}))=$
$$W(\chi(x_1),\chi(x_{g_2}),\cdots,\chi(x_{g_{|G|}}))=
W(w,wg_2,\cdots,w_{g_{|G|}}).$$ Clearly ${\rm tr}(w)=1$.\\
``{\rm (iv)} $\Rightarrow$ {\rm(i)}": Since ${\rm tr}(w)=1$, the map
$x_g\mapsto w_g$ extends to a $G$-equivariant epimorphism of
$k$-algebras $\theta:\ D_k\to U$, satisfying
$\theta(w)=W(\cdots,wg\cdots)=w$. Hence $\theta(wg)=\theta(w)g=wg$
and we conclude that $\theta_{|U}={\rm id}.$ It follows that
$D_k=U\oplus \ker(\theta).$
\end{proof}

Let $U\le D_k$ be standard. Since $D_k$ is a polynomial ring
it follows from \cite{costa} Corollary 1.11, that $U$ is a regular UFD. It is apparently
still an open question, whether or not every retract of a polynomial ring is again
a polynomial ring (see also \cite{shpilrain} pg 481).

\begin{df}\label{reflexive_points}
A point $w\in D_k$ will be called {\bf reflexive}, if
$$w=W(x_1,\cdots,x_g\cdots)=W(w,\cdots,wg,\cdots)=\theta(w),$$
where $\theta\in ({\rm End}_{k-{\rm alg}}(D_k))^G$ is defined by
$x_g\mapsto w\cdot g$ $\forall g\in G$.
\end{df}

\begin{rem}\label{refl_pt_rem}
\begin{enumerate}
\item By definition a trace-surjective  $G$-algebra is cyclic, if and only if it is generated as an algebra by the
$G$-orbit of one point. Lemma \ref{endo_krit_standard} shows, that the standard subalgebras of $D_k$ are precisely the subalgebras generated by the $G$-orbit of a \emph{reflexive} point.
\item There are categorical characterizations of $D_k$ and its standard subalgebras.
It turns out that $D_k$ is a projective generator in $\mathfrak{Ts}$ and
the standard subalgebras are precisely the cyclic projective objects.
We will investigate this and further properties of $\mathfrak{Ts}$ in
a subsequent paper.
\end{enumerate}
\end{rem}

Before describing a general inductive procedure to construct reflexive points
and standard subalgebras of $D_k$ here are some examples:

\begin{ex}\label{exx_cp}
Let $G:=\langle g\rangle \cong C_p$ be cyclic of order $p>2$, then
the subalgebra $k[y]\le D_k$ of section \ref{ex_cp} is standard
with reflexive point $-y^{p-1}$ where $y:=\sum_{i=1}^{p-1} ix_i\in D_k$.
\end{ex}

\begin{ex}\label{extraspecial_p_cube}
Let $G=\langle g_1,g_2\ |\ g_1^p=g_2^p=[g_1,g_2]^p=1\rangle$ be extraspecial
of order $p^3$, exponent $p>2$ and centre
$Z:=Z(G) \cong \langle g_0\rangle$ with $g_0:=[g_1,g_2]$.
Every element of $G$ can be uniquely written in the
form $g=g_0^{a_0}g_1^{a_1}g_2^{a_2}$, with $a_i\in \mathbb{F}_p$. Now
define as before, $V_{reg}:=\oplus_{g\in G}kX_g \cong kG$, the
regular $kG$-module, $D:=D_k:={\rm Sym}_k(V_{reg})/J=k[x_g\ |g\in
G]$ with $J=(j){\rm Sym}_k(V_{reg})$, $j:=-1+\sum_{g\in G} X_g$,
$x_g:=X_g+J$ and $\sum_{g\in G} x_g=1$. For $i=0,1,2$ set
$$y_i:=\sum_{\underline a\in \mathbb{F}_p^3} a_i \cdot x_{g_0^{a_0} g_1^{a_1} g_{2}^{a_{2}}}\in D_k.$$
Then a straightforward calculation using Lemma \ref{cycl_p_lemma} shows, that
$$y_{2}\cdot g_{2}=\sum_{\underline a\in \mathbb{F}_p^3} a_{2}\cdot x_{g_0^{a_0}g_1^{a_1}g_{2}^{a_{2}}}\cdot g_{2}=
\sum_{\underline a\in \mathbb{F}_p^3} (a_{2}+1) x_{(g_0^{a_0} g_{1}^{a_1}g_{2}^{a_2+1})} -
\sum_{\underline a\in \mathbb{F}_p^3} x_{(g_0^{a_0}g_{1}^{a_1})g_{2}^{a_{2}+1}}=y_{2}-1,$$
and $y_j\cdot g_{2}=y_j$ for $j<2$.
Using the formula $[x^a,y^b]=[x,y]^{ab}$ for $x,y\in G$ (since $[x,y]\in Z(G)$)
one has
$$y_j\cdot g_1=\sum_{\underline a\in \mathbb{F}_p^3}
a_j\cdot x_{g_0^{a_0}g_1^{a_1}g_{2}^{a_{2}}}\cdot g_1=\sum_{\underline a\in \mathbb{F}_p^3} a_j \cdot x_{g_0^{a_0-{a_{2}}}\cdot g_{1}^{a_{1}+1} \cdot g_{2}^{a_{2}}} .$$
Hence $y_1\cdot g_1=y_1-1$, $y_2\cdot g_1=y_2$, and for $j=0$ we get
$$y_0\cdot g_1=\sum_{\underline a\in \mathbb{F}_p^3} (a_0-a_{2}) x_{g_0^{a_0-{a_{2}}}\cdot g_{1}^{a_{1}+1} \cdot g_{2}^{a_{2}}}+
a_{2}\cdot x_{g_0^{a_0-{a_{2}}}\cdot g_{1}^{a_{1}+1} \cdot g_{2}^{a_{2}}}=y_0+y_{2}.$$
Let ${\mathcal B}:= \{y_0,y_1,y_2,1\}$, then we see that the subspace $\langle {\mathcal B}\rangle\le D_k$ is
$G$-stable, providing a faithful representation of $G$ with
$$M_{\mathcal B}(g_1)=
\left( \begin{array}{cccc}
\ 1&0&1&\ 0\\
\ 0&1&0&-1\\
\ 0&0&1&\ 0\\
\ 0&0&0&\ 1\\
\end{array}\right)\ {\rm and}\ M_{\mathcal B}(g_2)=
\left( \begin{array}{cccc}
\ 1&0&0&\ 0\\
\ 0&1&0&\ 0\\
\ 0&0&1&-1\\
\ 0&0&0&\ 1\\
\end{array}\right).$$
It is not hard to see that
\begin{itemize}
\item the elements $y_0,y_1,y_2\in D_k$ are algebraically independent.
\item the subspace $Y:=\langle y_0^{z_0}y_1^{z_1}y_{2}^{z_{2}}\ |\ 0\le z_i<p\rangle\le D_k,$
is a free $kG$-module of rank one with
${\rm tr}((y_0y_1y_{2})^{p-1})= -1.$
\item The $G$-algebra endomorphism $\theta:\ D_k\to D_k$ defined by the map
$x_g \to wg$ with $w:=-(y_0y_1y_{2})^{p-1}$ satisfies
$\theta(y_i)=y_i$ for $i=0,1,2$.
\end{itemize}
Hence the polynomial ring $U:=k[y_0,y_1,y_{2}]\le D_k$ is a standard
$G$-subalgebra of $D_k$ with reflexive point $w=-(y_0y_1y_{2})^{p-1}$.
\end{ex}

\begin{ex}\label{direct_product}
Consider the $p$-groups $G$, $H$ and $G\times H$ with
$D_k(G)=k[x_g\ |\ g\in G]$, $D_k(H)=k[y_h\ |\ h\in H]$ and
$D_k(G\times H)=k[t_{gh}\ |\ gh\in G\times H]$ and
$$\sum_{g\in G}x_g=1,\ \sum_{h\in H}y_h=1,
\sum_{gh\in G\times H}t_{gh}=1, {\rm respectively}.$$
For $g\in G$ and $h\in H$ define $X_g:=\sum_{h\in H}t_{gh}$ and
$Y_h:=\sum_{g\in G}t_{gh}$. Then
it easy to see that $\omega:=X_1Y_1\in D_k(G\times H)$ is a point. Define
a $G\times H$-algebra homomorphism by
$$\theta:\ D_k(G\times H)\to D_k(G\times H),\ t_e gh\mapsto (\omega) gh,$$
then an elementary calculation shows that $\theta(\omega)=\omega$,
hence $\omega$ is a reflexive point in the subalgebra
$S:=k[X_g,Y_h\ |\ 1\ne g\in G, 1\ne h\in H]\le D_k(G\times H)$.
Obviously the $X_g,Y_h$ with $1\ne g,h$ are algebraically independent
and the map
$D_k(G)\otimes_k D_k(H)\to D_k(G\times H)$ extending $x_g\otimes y_h\mapsto X_gY_h$,
is a $G\times H$-equivariant $k$-algebra homomorphism. It maps
the (generating) point $x_1\otimes y_1\in D_k(G)\otimes_k D_k(H)$ to $\omega$
and it maps $D_k(G)\otimes_k D_k(H)$ isomorphically onto $S=k[\omega^{G\times H}]$.
It follows that the algebra $D_k(G)\otimes D_k(H)$ is a standard subalgebra of $D_k(G\times H)$
of Krull-dimension $|G|+|H|-2$.
\end{ex}

We are now going to describe an inductive procedure to construct reflexive points and
standard subalgebras in $D_k(G)$ for arbitrary $p$-groups $G$:
Let $F=\oplus_{g\in G} kx_g\cong kG$, $F\le D_k$ and let
$Z:=\langle g_0\rangle\cong C_p$ be a subgroup of the centre of $G$.
For each $\bar g:=gZ\in G/Z$ we define
\begin{equation}\label{def_eta_g}
\eta_g:=\eta_{\bar g}:=\sum_{h\in gZ} x_h\in F^Z,
\end{equation}
then the $\eta_{\bar g}$ for $\bar g\ne \bar 1$ are algebraically independent,
$\sum_{\bar g\in G/Z}\eta_{\bar g}=1$ and
$\sum_{\bar g\in G/Z}k\eta_{\bar g}=\oplus_{\bar g\in G/Z}k\eta_{\bar g}\cong k(G/Z).$
Hence
\begin{equation}\label{D_k_G_Z}
D_k(G/Z)\cong \tilde D:=k[\eta_{\bar g}|\bar g\in G/Z] \le D_k(G)^Z.
\end{equation}
Let $G=\uplus_{r\in {\mathcal R}}Zr$ with ${\mathcal R}$ a transversal of
$Z$-cosets in $G$. Set $t_{\mathcal R} \in {\rm End}_k(D_k(G))$, mapping
$f\mapsto \sum_{r\in {\mathcal R}}\ fr$ and define
$y_0:=\sum_{i=1}^{p-1}i\cdot t_{\mathcal R}(x_{g_0^i}).$
For any $h\in G$ we have
$$y_0\cdot h = y_0 - \sum_{r\in {\mathcal R}} e_{r,h}\cdot \eta_{r_{r,h}},$$
where $e_{r,h}\in \mathbb{F}_p$ and $r_{r,h}\in {\mathcal R}$ are defined by
the equation
\begin{equation}\label{def_e_r_h}
rh=g_0^{e_{r,h}}r_{r,h}.
\end{equation}
Moreover, for every point $w'\in D_k(G/Z)$, the element
\begin{equation}\label{w_def}
w:=-(y_0)^{p-1}\cdot w'\in D_k(G)
\end{equation}
is a point. Indeed: we have
$y_0\cdot h =\sum_{i=0}^{p-1}i\cdot \sum_{r\in {\mathcal R}}x_{g_0^irh} =
\sum_{i=0}^{p-1}i\cdot \sum_{r\in {\mathcal R}}x_{g_0^{i+e_{r,h}}r_{r,h}}=$
$$\sum_{r\in {\mathcal R}}\big(\sum_{i\in \mathbb{F}_p}(i+e_{r,h})x_{g_0^{i+e_{r,h}}r_{r,h}}-
e_{r,h}\sum_{i\in \mathbb{F}_p} x_{g_0^{i+e_{r,h}}r_{r,h}}\big)=$$
$$\sum_{r\in {\mathcal R}}\big(\sum_{i\in \mathbb{F}_p}ix_{g_0^ir_{r,h}}-
e_{r,h}\sum_{i\in \mathbb{F}_p} x_{g_0^ir_{r,h}}\big)=
y_0-\sum_{r\in {\mathcal R}}e_{r,h}\cdot \eta_{r_{r,h}}.$$
Since $e_{r,g_0}=1$ and $r_{r,g_0}=r$ for every $r\in {\mathcal R}$,
it follows that
$y_0g_0=y_0-1.$
Let $w:=-y_0^{p-1}\cdot w'$ and $t_Z^G$ be the relative transfer map,
then we get
$${\rm tr}(w)=t_Z^G\circ \tr^{(Z)}(-w'\cdot y_0^{p-1}) =
-t_Z^G(w'\cdot \tr^{(Z)}(y_0^{p-1})) = -\tr^{(G/Z)}(w'\cdot(-1))=1,$$
and $w$ is a point, but in general not a \emph{reflexive} point.

\begin{lemma}\label{tryout}
Assume that $w'$ is a reflexive point in $D_k(G/Z)$, with standard subalgebra
$$U_{G/Z}:=k[w'\bar g\ |\bar g\in G/Z]\le D_k(G/Z).$$
Let $\theta\in ({\rm End}_{k-{\rm alg}}(D_k))^G$ be defined by $x_g\mapsto w\cdot g$ with
point $w:=-y_0^{p-1}\cdot w'$. Then the following hold:
\begin{enumerate}
\item Set $\tilde y_0=\theta^{p-1}(y_0)$, then
the element $$\tilde w:= \theta^p(x_1)=\theta^{p-1}(w)=-\tilde y_0^{p-1}w'$$ is a reflexive point of $D_k$,
generating a standard subalgebra
$$U_G:=k[\tilde wg\ |\ g\in G]=k[\tilde y_0,w'{\bar g}\ |\bar g\in G/Z]=
k[\tilde y_0,U_{G/Z}]=k[\theta(y_0),U_{G/Z}],$$
which is a subalgebra of the $G$-subalgebra
$k[y_0,D_k(G/Z)]=k[\tilde y_0,D_k(G/Z)].$
\item The $G$-action on $U_G$ is determined by the one on $U_{G/Z}$ and the formula
\begin{equation}\label{g_tilde_y0}
\tilde y_0g=\tilde y_0-\sum_{r\in {\mathcal R}}e_{r,g}\cdot (w'rg)\ \forall g\in G.
\end{equation}
\item The element $y_0$ is algebraically independent of $D_k(G/Z)$ and
$\tilde y_0$ is algebraically independent of $U_{G/Z}.$
\end{enumerate}
\end{lemma}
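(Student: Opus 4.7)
My plan is to ground everything in a careful analysis of how the $G$-equivariant endomorphism $\theta\in({\rm End}_{k-{\rm alg}}(D_k))^G$ acts on the two subrings $\tilde D=D_k(G/Z)=k[\eta_{\bar g}]$ and $k[y_0,\tilde D]$. First I would show that $\theta|_{\tilde D}$ coincides with the idempotent projection $\eta_{\bar g}\mapsto w'\bar g$ onto $U_{G/Z}$. Indeed, since $w g_0^i g=-(y_0 g-i)^{p-1}(w'g)$, one has $\theta(\eta_g)=\sum_{i=0}^{p-1}w g_0^i g=-(w'g)\sum_{i=0}^{p-1}(y_0 g-i)^{p-1}$, and Lemma \ref{cycl_p_lemma}(i) collapses the inner sum to $-1$, giving $\theta(\eta_g)=w'\bar g$. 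Two facts follow at once: $\theta^n|_{\tilde D}=\theta|_{\tilde D}$ for every $n\ge 1$, and $\theta(w')=w'$ by reflexivity of $w'$ in $D_k(G/Z)$, hence $\theta$ fixes $U_{G/Z}$ pointwise.

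Next I would compute $\theta(y_0)$ by the parallel argument, this time invoking Lemma \ref{cycl_p_lemma}(ii) ($\sum_i i(T-i)^{p-1}=-T$) to obtain $\theta(y_0)=\sum_r(w'r)(y_0 r)$. Substituting $y_0 r=y_0-\sum_s e_{s,r}\eta_{r_{s,r}}$ and using $\sum_r w'r={\rm tr}^{G/Z}(w')=1$ yields $\theta(y_0)=y_0+\gamma$ for an explicit $\gamma\in\tilde D$, with $\theta(\gamma)\in U_{G/Z}$ already $\theta$-fixed by Step~1. An easy induction then gives $\theta^n(y_0)=y_0+\gamma+(n-1)\theta(\gamma)$ for $n\ge 1$; in particular $\tilde y_0=\theta^{p-1}(y_0)=y_0+\gamma-2\theta(\gamma)\in k[y_0,\tilde D]$, and a direct check shows $\theta^p(\tilde y_0)=\tilde y_0$. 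Since $\theta(w')=w'$, iterating $\theta(w)=-\theta(y_0)^{p-1}w'$ produces $\tilde w=\theta^{p-1}(w)=-\tilde y_0^{p-1}w'$, which matches the formula asserted in (i).

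For (ii), I would simply apply the $G$-equivariant morphism $\theta^{p-1}$ to the identity $y_0 g=y_0-\sum_r e_{r,g}\eta_{r_{r,g}}$ proved just before the lemma, using Step~1 to rewrite each $\theta^{p-1}(\eta_{r_{r,g}})$ as $w'\overline{r_{r,g}}=w'\bar r\bar g=w'rg$. For (iii), the cleanest route is a Vandermonde change of coordinates inside $D_k(G)$: for each coset representative $r\in\mathcal R$ set $z_j^{(r)}:=\sum_{i=0}^{p-1}i^j x_{g_0^i r}$. Because the Vandermonde matrix $(i^j)_{0\le i,j\le p-1}$ is invertible over $\mathbb{F}_p$, the $z_j^{(r)}$ form another polynomial generating set of $D_k(G)$ modulo the single relation $\sum_r z_0^{(r)}=1$, with $z_0^{(r)}=\eta_r$ and $y_0=\sum_r z_1^{(r)}$. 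This exhibits $y_0$ as transcendental over $\tilde D$, and since $\tilde y_0-y_0\in\tilde D$ by Step~2, the same holds for $\tilde y_0$ (in particular over $U_{G/Z}\subseteq\tilde D$).

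It remains to finish (i). Specialising (ii) to $g=g_0$ (where $e_{r,g_0}=1$ and $r_{r,g_0}=r$) yields $\tilde y_0 g_0=\tilde y_0-1$, so Lemma \ref{cycl_p_lemma}(i) gives ${\rm tr}^Z(\tilde w)=w'$, and the relative transfer $t_Z^G$ then yields ${\rm tr}^G(\tilde w)={\rm tr}^{G/Z}(w')=1$; thus $\tilde w$ is a point. By $G$-equivariance of $\theta$ one has $\theta^p(x_g)=\tilde w g$, so $\theta^p$ is precisely the endomorphism associated with $\tilde w$ in Definition \ref{reflexive_points}, and reflexivity reduces to $\theta^p(\tilde w)=-(\theta^p(\tilde y_0))^{p-1}\theta^p(w')=-\tilde y_0^{p-1}w'=\tilde w$ via the fixed-point facts of Steps~1 and 2. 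The asserted chain $U_G=k[\tilde y_0,U_{G/Z}]=k[\theta(y_0),U_{G/Z}]\subseteq k[y_0,D_k(G/Z)]=k[\tilde y_0,D_k(G/Z)]$ then drops out: the inclusion $U_G\subseteq k[\tilde y_0,U_{G/Z}]$ comes from (ii) applied to $\tilde w g=-(\tilde y_0 g)^{p-1}(w'g)$; the reverse uses ${\rm tr}^Z(\tilde w g)=w'g\in U_G$ (so $U_{G/Z}\subseteq U_G$) together with $\tilde y_0\in U_G$, the latter because $U_G=\theta^p(D_k(G))$ equals the fixed subring of the idempotent $\theta^p$; the middle equality uses $\tilde y_0-\theta(y_0)=-2\theta(\gamma)\in U_{G/Z}$, and the final inclusion is immediate from Step~2. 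I expect the main obstacle to be the very first computational step: cleanly extracting $\theta(\eta_g)=w'\bar g$ and $\theta(y_0)=y_0+\gamma$ from the two parts of Lemma \ref{cycl_p_lemma}, since the idempotency of $\theta^p$ and everything flowing from it rest on these two identities.
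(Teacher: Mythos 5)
Your proof is correct and follows essentially the same route as the paper's: the same two evaluations via Lemma \ref{cycl_p_lemma} (namely $\theta(\eta_{\bar g})=w'\bar g$, hence $\theta$ fixes $U_{G/Z}$ pointwise, and $\theta(y_0)=y_0+\gamma$ with $\gamma\in D_k(G/Z)$), the same iteration showing $\theta^p$ is an idempotent $G$-equivariant endomorphism with image $U_G$ fixing $\tilde y_0$ and $w'$, and the same degree-one linear-independence idea for part (iii). The only point to patch is that for $p=2$ your formula $\theta(y_0)=\sum_r(w'r)(y_0r)$ acquires an extra constant $+1$ (Lemma \ref{cycl_p_lemma}(ii) gives $(T+1)^j$ rather than $-T$ in that case, which is exactly the paper's ``$(1+)$'' bookkeeping); since this constant lies in $k\subseteq U_{G/Z}$, none of your subsequent steps is affected.
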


\begin{proof} Recall the definition of the $\eta_{\bar g}$ in equation (\ref{def_eta_g}); we
define $k$-algebra endomorphisms $\theta'\in ({\rm End}_{k-{\rm alg}}(D_k(G/Z)))^{G/Z}$ by $\theta'(\eta_{\bar g}):= w'\cdot \bar g$
and $\theta\in ({\rm End}_{k-{\rm alg}}(D_k(G)))^G$ by\\
$\theta(x_g):= w\cdot g.$
Since we assume now that $w'$ is a reflexive point of $D_k(G/Z)$, we have $\theta'(w')=w'.$
We have
$\theta(\eta_{\bar g})= \sum_{i\in \mathbb{F}_p} \theta(x_{g_0^ig})=$
$$\sum_{i\in \mathbb{F}_p} (-y_0^{p-1}w')g_0^ig=
\sum_{i\in \mathbb{F}_p} \big([(-y_0^{p-1})g_0^i]\cdot w'\big)g=w'g=\theta'(\eta_{\bar g}).$$
It follows that $\theta(w')=w'$.
Moreover, using Lemma \ref{cycl_p_lemma} we get
$$\theta(y_0)=\sum_{i\in \mathbb{F}_p\atop r\in {\mathcal R}} i(-y_0^{p-1}w')g_0^ir=
\sum_{i\in \mathbb{F}_p\atop r\in {\mathcal R}} [i(-y_0^{p-1}g_0^i)w']r=
\sum_{r\in {\mathcal R}} (y_0w')r,\ {\rm if}\ p>2$$
and $\sum_{r\in {\mathcal R}} [(y_0+1)w']r=$
$\sum_{r\in {\mathcal R}} (y_0w')r+\sum_{r\in {\mathcal R}} w'r=$
$1+\sum_{r\in {\mathcal R}} (y_0w')r$, if $p=2$.
\footnote{In particular $\theta(w)=w\iff \theta(y_0)^{p-1}=y_0^{p-1}\iff$
$\theta(y_0)=\lambda\cdot y_0$ for some $0\ne \lambda\in \mathbb{F}_p.$}\\
Recall the definition of the $e_{r,h}\in \mathbb{F}_p$ and $r_{r,h}\in {\mathcal R}$ in
equation (\ref{def_e_r_h}). We have
$\theta(y_0)=$
$(1+)\sum_{r\in {\mathcal R}} (y_0r)(w'r)= (1+)\sum_{r\in {\mathcal R}}
(y_0-\sum_{r'\in {\mathcal R}}e_{r',r}\cdot \eta_{r_{r',r}})(w'r)=$
$y_0-\zeta$, where
$\zeta=(1+) \sum_{r,r'\in {\mathcal R}}e_{r',r}\cdot \eta_{r_{r',r}}(w'r)\in D_k(G/Z)$,
but not necessarily in $U_{G/Z}$. (The summand ``$1+$" appears only if $p=2$).
However, using $\theta(\eta_{r_{r',r}})=\theta'(\eta_{r_{r',r}})=w'r_{r',r}=w'r'r$ and
$\sum_{r\in {\mathcal R}}w'r=1$
we get
$\theta^2(y_0)=\theta(\theta(y_0))=$
$$(1+) \sum_{r\in {\mathcal R}}
(\theta(y_0)-\sum_{r'\in {\mathcal R}}e_{r',r}\cdot w'r'r)(w'r)=\theta(y_0)+\gamma$$
with $\gamma:=(1+)-\sum_{r\in {\mathcal R}}
[\sum_{r'\in {\mathcal R}}e_{r',r}\cdot (w'r')w']r\in U_{G/Z}.$
Hence we have $\gamma=\theta(\gamma)$ and we conclude that $\theta^i(\theta(y_0))=\theta(y_0)+i\cdot\gamma$, so
$\theta^p(\theta(y_0))=\theta(y_0)$.
It follows that $\theta(w)=\theta^2(x_1)$ is fixed under $\theta^p$, as $\theta(w')=w'$, hence
$$(\theta^p)^2(x_1)=\theta^{p-2}(\theta^p(\theta^2(x_1))) =
\theta^{p-2}(\theta^2(x_1))=\theta^p(x_1).$$ Let $\chi:= \theta^p$
and $\tilde w=\theta^{p-1}(w)= \chi(x_1),$ then $\chi(\tilde
w)=\tilde w$ and it follows from Lemma \ref{endo_krit_standard}, that
$\tilde w$ is a reflexive point, whose $G$-orbit generates a standard
subalgebra
$$U_G:=\chi(D_k(G))=k[\tilde w\cdot g\ |g\in G].$$
Define $\tilde y_0:=\theta^{p-1}(y_0)$, then
$\tilde w=-\tilde y_0^{p-1}\cdot w'\in k[\tilde y_0,U_{G/Z}].$
For every $h\in G$ we have $\tilde y_0h=$
$\tilde y_0-\sum_{r\in {\mathcal R}}e_{r,h}\cdot \theta^{p-2}(w'rh)=
\tilde y_0-\sum_{r\in {\mathcal R}}e_{r,h}\cdot (w'rh),$
so $k[\tilde y_0,U_{G/Z}]$ is $G$-stable and contains $U_G$.
On the other hand, $\tilde y_0= \chi(\tilde y_0)\in \chi(D_k(G))=U_G$ and likewise
$w'=\chi(w')\in U_G$, hence
$U_G=k[\tilde y_0,U_{G/Z}].$\\
For the last statement, recall that $\theta(y_0)=y_0-\zeta$ with
$\zeta \in D_k(G/Z)$, hence $\tilde y_0=y_0+\mu$ for some $\mu\in
D_k(G/Z)$ and $k[y_0,D_k(G/Z)]=k[\tilde y_0,D_k(G/Z)]$.
It therefore suffices to show that the set $S = \{y_0,
\eta_{\bar r}\ |r\in {\mathcal R}\backslash \{1\}\}$ is
algebraically independent. It is contained in $D_k(G)$, which is a
polynomial ring generated by the algebraically independent set
$\{x_{g_0^ir}\ne x_1\}$, and is clearly linearly independent over $k$.
Since $S \subseteq D_k(G)_1$ (the graded component of degree 1) it
follows directly that it is algebraically independent over $k$, as
required.
\end{proof}

\begin{cor}\label{direct product}
Let $G=Z\times Q$ be a $p$-group with $Z\cong C_p$ and $p>2$.
If $w'\in D_k(Q)$ is a reflexive point, then so is $w=-y_0^{p-1}w'\in D_k(G)$. If $U_Q=k[w'^Q]\le D_k(Q)$
is a standard subalgebra, then $U:=k[w^G]\le D_k(G)$ is standard.
\end{cor}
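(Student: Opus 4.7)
The plan is to invoke Lemma \ref{tryout} with the transversal $\mathcal{R} = Q$ of $Z$-cosets in $G$ and to exploit the direct-product structure to show that in this case $\theta(y_0) = y_0$ already, so that $w$ itself (and not merely $\tilde w = \theta^{p-1}(w)$) is reflexive.

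First I would identify $G/Z$ with $Q$, and correspondingly $D_k(G/Z)$ with $D_k(Q)$, via $\eta_{\bar r} \leftrightarrow x_r$ for $r \in Q$. The key consequence of $Z$ being central is the following: for any $h \in G$, writing $h = g_0^i q$ with $q \in Q$, one has $rh = g_0^i (rq)$ for every $r \in \mathcal{R} = Q$, so in the notation of (\ref{def_e_r_h}) the coefficient $e_{r,h} = i$ is \emph{independent of} $r$, and $r_{r,h} = rq$. In particular, for every $q \in Q$, $e_{r,q} = 0$ for all $r$, and the formula $y_0 h = y_0 - \sum_{r \in \mathcal{R}} e_{r,h} \eta_{r_{r,h}}$ (established in the paragraph preceding Lemma \ref{tryout}) gives $y_0 q = y_0$; i.e.\ $y_0$ is $Q$-fixed.

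Next I would compute $\theta(y_0)$ via the identity from the proof of Lemma \ref{tryout}: for $p > 2$,
$$\theta(y_0) = \sum_{r \in \mathcal{R}} (y_0 w') r = \sum_{r \in Q} (y_0 r)(w' r) = y_0 \cdot \sum_{r \in Q} (w' r) = y_0 \cdot \tr^{(Q)}(w') = y_0,$$
where the penultimate equality uses the $Q$-invariance of $y_0$ just established, and the final one uses that $w'$ is a point of $D_k(Q)$. Combining this with $\theta(w') = w'$---which holds because $\theta$ restricted to $D_k(G/Z)$ coincides with the endomorphism $\theta'$ of $D_k(G/Z)$ (cf.\ the proof of Lemma \ref{tryout}), and $w'$ is reflexive---yields
$$\theta(w) = -\theta(y_0)^{p-1}\theta(w') = -y_0^{p-1} w' = w,$$
so $w$ is a reflexive point of $D_k(G)$.

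Finally, from $\theta(w) = w$ one gets $\theta^2(x_g) = \theta(wg) = wg = \theta(x_g)$, hence $\theta^2 = \theta$ and $\theta(D_k(G)) = k[wg \mid g \in G] = U$. Lemma \ref{endo_krit_standard}~(ii)$\Rightarrow$(i) then shows $U$ is standard. The only non-formal step in this plan is the simplification $e_{r,h} = i$ independent of $r$ in the direct-product case---it is precisely what makes $\sum_{r \in Q}(y_0 w')r$ factor through $\tr^{(Q)}(w') = 1$. Everything else is a direct consequence of identities already established around Lemma \ref{tryout}.
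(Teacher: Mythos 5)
Your proposal is correct and takes essentially the same approach as the paper's own proof: choose the transversal $\mathcal{R}=Q$, observe that $e_{r,q}=0$ for all $r,q\in Q$ so that $y_0\in D_k(G)^Q$, and then use the identity $\theta(y_0)=\sum_{r\in\mathcal{R}}(y_0w')r$ (valid for $p>2$) from the proof of Lemma \ref{tryout} to get $\theta(y_0)=y_0\cdot\tr^{(G/Z)}(w')=y_0$ and hence $\theta(w)=-y_0^{p-1}w'=w$. The only difference is one of detail: the paper compresses this computation into the phrase ``the proof of Lemma \ref{tryout} shows that $\theta(w)=w$'', whereas you spell it out explicitly before invoking Lemma \ref{endo_krit_standard} to conclude that $U$ is standard.
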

\begin{proof}
In this case we can choose ${\mathcal R}:=Q$ and then have $e_{r,q}=0$ for all $r,q\in Q$.
Hence $y_0\in D_k(G)^Q$ and the proof of Lemma \ref{tryout} shows that
$\theta(w)=w$ is a reflexive point.
\end{proof}

\begin{rem}\label{direct product_rem}
If $p=2$, $w$ is not a \emph{reflexive} point, but $w+w'=\theta^2(x_1)$ is.
\end{rem}

\begin{thm}\label{arb_p_grp}
Let $G$ be an arbitrary finite $p$-group of order $p^n$. Then there is a trace-surjective
standard $G$-subalgebra $U\le D_k(G)$, such that $U$ is a polynomial ring
of Krull-dimension $n$.
\end{thm}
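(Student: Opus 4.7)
The plan is to argue by induction on $n=\log_p|G|$, using the machinery developed in Section \ref{std_sub} (especially Lemma \ref{tryout}) as the inductive engine and the cyclic case from Section \ref{ex_cp} as the base case.

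For the base case $n=1$, we have $G\cong C_p$, and Proposition \ref{cycl_p_univ_model} provides the standard $G$-subalgebra $U=k[y]\le D_k$, which is a polynomial ring of Krull-dimension $1$ with reflexive point $-y^{p-1}$ (for $p>2$; the case $p=2$ is trivial as $U=D_k$). This gives us both a polynomial standard subalgebra and a reflexive point to start the induction, which is crucial, since Lemma \ref{tryout} requires a reflexive point of $D_k(G/Z)$ as input, not merely a standard subalgebra.

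For the inductive step, assume the statement (strengthened to produce a polynomial ring that is the image of a \emph{reflexive} point) holds for all $p$-groups of order $p^{n-1}$, and let $|G|=p^n$. Since $G$ is a non-trivial finite $p$-group, its centre is non-trivial, so we may choose a central subgroup $Z\le Z(G)$ with $Z\cong C_p$. Then $|G/Z|=p^{n-1}$, and by the inductive hypothesis there is a reflexive point $w'\in D_k(G/Z)$ generating a standard $(G/Z)$-subalgebra $U_{G/Z}\le D_k(G/Z)$ which is a polynomial ring of Krull-dimension $n-1$. We now apply Lemma \ref{tryout} with this data: it produces a reflexive point $\tilde w=-\tilde y_0^{p-1}\cdot w'\in D_k(G)$ and the associated standard $G$-subalgebra
\[
U_G\;=\;k[\tilde w g\mid g\in G]\;=\;k[\tilde y_0,\,U_{G/Z}]\;\le\;D_k(G).
\]

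It remains to verify that $U_G$ is a polynomial ring of Krull-dimension $n$. By the inductive hypothesis $U_{G/Z}$ is a polynomial ring in $n-1$ algebraically independent generators, and Lemma \ref{tryout}(iii) asserts that $\tilde y_0$ is algebraically independent of $U_{G/Z}$. Hence $U_G=k[\tilde y_0,U_{G/Z}]$ is a polynomial ring in $n$ algebraically independent generators, of Krull-dimension $n$, completing the induction. The only real obstacle is the requirement that the inductive hypothesis furnish a \emph{reflexive} point (not just a standard subalgebra) — but this is exactly what the construction in Lemma \ref{tryout} both consumes and re-produces, so the induction propagates cleanly and the proof is essentially a packaging argument on top of the technical work already done.
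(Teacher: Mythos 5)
Your proof is correct and takes essentially the same route as the paper's own argument: induction on $n$ with Proposition \ref{cycl_p_univ_model} as the base case and Lemma \ref{tryout} as the inductive engine, concluding via the algebraic independence of $\tilde y_0$ over $U_{G/Z}$ from Lemma \ref{tryout}(iii). The strengthening of the inductive hypothesis to carry a \emph{reflexive} point (not merely a polynomial standard subalgebra), which you make explicit, is exactly what the paper's proof uses implicitly when it chooses $w'\in D_k(G/Z)$ to be a reflexive point.
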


\begin{proof} The proof is by induction on $n$, where the case $n=1$ follows from Proposition \ref{cycl_p_univ_model}.
With notation from Lemma \ref{tryout}, let $w'\in D_k(G/Z)$ be a reflexive point, chosen such that $U_{G/Z}$ is polynomial of
Krull-dimension $n-1$ and set $\tilde w$ as in Lemma \ref{tryout}. Then $U_G=k[\tilde w^G]=
k[\tilde y_0,U_{G/Z}]$, where $\tilde y_0$ is algebraically independent of $U_{G/Z}$.
Hence $U_G$ is a polynomial ring of
Krull dimension $1+n-1=n.$
\end{proof}

The next result shows that, at least for $k=\mathbb{F}_p$, the subalgebras of Theorem \ref{arb_p_grp}
have the minimal possible Krull-dimension for polynomial retracts of $D_k$:

\begin{prp}\label{minim_standard}
Let $k=\mathbb{F}_p$, $|G|=p^n$ and $A=k[a_1,\cdots,a_m]\le D_k$ be a trace-surjective  $G$ subalgebra, then $m\ge n$. In particular, every polynomial standard subalgebra of $D_k$ has
Krull-dimension $\ge n$.
\end{prp}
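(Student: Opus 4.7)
The plan is to count $\mathbb{F}_p$-rational points of $\mathrm{Spec}(A)$, i.e., $k$-algebra homomorphisms $A\to\mathbb{F}_p$, in two different ways. Write $X:=\mathrm{Hom}_{k\text{-alg}}(A,\mathbb{F}_p)$. Since $A=k[a_1,\ldots,a_m]$, every $\phi\in X$ is determined by $(\phi(a_1),\ldots,\phi(a_m))\in\mathbb{F}_p^m$, which gives the easy upper bound $|X|\le p^m$. The bulk of the work is to show $|X|\ge p^n$, which together forces $p^n\le p^m$ and hence $m\ge n$.

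For the lower bound, I would first check that $X$ is non-empty: the assignment $x_1\mapsto 1$, $x_g\mapsto 0$ for $1\ne g\in G$ respects the relation $-1+\sum_{g\in G}x_g=0$ and therefore defines a $k$-algebra homomorphism $D_k\to\mathbb{F}_p$, whose restriction to $A\le D_k$ is a point of $X$. Next, equip $X$ with the natural right $G$-action $(\phi\cdot g)(a):=\phi(ag)$. For $\phi\in X$ with $\mathfrak{m}:=\ker\phi$, the equality $\phi\cdot g=\phi$ is equivalent to $g^{-1}(\mathfrak{m})=\mathfrak{m}$, so the stabilizer of $\phi$ is precisely the decomposition group $G_{\mathfrak{m}}:=\mathrm{Stab}_G(\mathfrak{m})$. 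Since $A/\mathfrak{m}\cong\mathbb{F}_p$ is the prime field, every $k$-algebra automorphism of $A/\mathfrak{m}$ is the identity, so $G_{\mathfrak{m}}$ coincides with the inertia group $T_{\mathfrak{m}}$. But by (the proof of) Proposition~\ref{G_p_gr_ts_iff_gal}, applied to the trace-surjective $G$-algebra $A$, one has $T_{\mathfrak{m}}=1$. Consequently $G$ acts freely on the non-empty set $X$, whence $|G|=p^n$ divides $|X|$ and $|X|\ge p^n$.

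Combining the two bounds gives $p^n\le|X|\le p^m$, so $m\ge n$. For the final assertion, if $U\le D_k$ is a polynomial standard subalgebra, choose polynomial generators $u_1,\ldots,u_m$ with $U=k[u_1,\ldots,u_m]$ and $\mathrm{Dim}(U)=m$; the same argument then forces $\mathrm{Dim}(U)=m\ge n$.

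The only delicate step is justifying the freeness of the $G$-action on $X$, and this reduces to the simple observation that $\mathrm{Aut}_{\mathbb{F}_p}(\mathbb{F}_p)=\{1\}$, which collapses the decomposition group to the inertia group at any $\mathbb{F}_p$-point; once that is in place, the proof is an orbit-counting argument. The hypothesis $k=\mathbb{F}_p$ enters precisely here, since for $k=\mathbb{F}_{p^s}$ one only obtains $|X|\le p^{sm}$ and hence the weaker bound $m\ge n/s$.
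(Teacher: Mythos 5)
Your proof is correct, but it reaches the bound by a genuinely different mechanism than the paper's. The paper argues module-theoretically: there is a morphism $\phi$ from $D_k$ onto the ``totally disconnected'' algebra $\Delta_k:=\prod_{g\in G}kb_g\cong k^{|G|}$ (regular permutation action of $G$, componentwise multiplication); the image $\phi(A)$ is a trace-surjective subalgebra of $\Delta_k$, hence a free $kG$-module by Lemma \ref{tr}, hence equal to $\Delta_k$ on dimension grounds; then, since every element of $\Delta_k=\mathbb{F}_p^{p^n}$ satisfies $x^p=x$, the kernel $J$ of $\phi_{|A}$ contains every $a_i^p-a_i$, so $p^{p^n}=\#(A/J)\le \#\, k[X_1,\cdots,X_m]/(X_i^p-X_i \mid i=1,\cdots,m)=p^{p^m}$, forcing $m\ge n$. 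You instead count $\mathbb{F}_p$-rational points of $A$: non-emptiness by restricting the evident point of $D_k$, and freeness of the $G$-action via triviality of inertia groups, quoted from the proof of Proposition \ref{G_p_gr_ts_iff_gal}. The two intermediate facts are equivalent (by the Chinese Remainder Theorem, a surjection $A\twoheadrightarrow\mathbb{F}_p^{p^n}$ amounts to $p^n$ distinct $\mathbb{F}_p$-points), but the engines producing them differ: the paper leans on the $kG$-module freeness of trace-surjective algebras (Theorem \ref{first_main}), whereas you lean on Galois-theoretic unramifiedness. Your route is more geometric, cleanly isolates where $k=\mathbb{F}_p$ enters, and immediately shows how the bound degrades to $m\ge n/s$ over $\mathbb{F}_{p^s}$; the paper's route stays entirely within its Section \ref{basic} toolkit and exhibits the stronger structural fact $\phi(A)=\Delta_k$ directly. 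One cosmetic remark: your formula $(\phi\cdot g)(a):=\phi(ag)$ actually defines a left action, since $(\phi\cdot g)\cdot h=\phi\cdot(hg)$, but this has no bearing on the orbit count.
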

\begin{proof}
Let $\Delta_k:=\prod_{g\in G}\ kb_g\cong k^{|G|}$, isomorphic to the
regular $kG$-module but with diagonal multiplication (i.e. a ``totally disconnected"
algebra). Then $\Delta_k$ is a trace-surjective $G$-algebra and there is a (necessarily surjective)
morphism $\phi:\ D_k \to \Delta_k$. Since $\phi(A)$ is a free $kG$-module, $\phi(A)=\Delta_k$.
Let $J:=\ker(\phi_{|A})$, then
$(a_i^p-a_i\ |\ i=1,\cdots,m)A\le J$ and
$p^{p^n}=\#\Delta_k=$
$$\#(A/J)\le \#k[X_1,\cdots,X_m]/(X_i^p-X_i\ |\ i=1,\cdots,m)=\#(k^{\oplus p})^{\otimes m}=p^{p^m}.$$
\end{proof}

\section{Polynomial rings of invariants and a structure theorem}\label{poly_strct}

Theorem \ref{arb_p_grp} together with the following result and Remark \ref{UG_is_poly_rem}(iii) form our First Main Theorem \ref{arb_p_grp_intro}, stated in the Introduction:

\begin{thm}\label{UG_is_poly}
Let $G$ be a finite group of order $p^n$, then there exists a trace-surjective standard
polynomial subalgebra $U_G\le D_k(G)$ of Krull-dimension $n$,
such that the ring of invariants $U_G^G$ is again a polynomial ring of Krull-dimension $n$.
\end{thm}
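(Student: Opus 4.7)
The plan is induction on $n = \log_p|G|$, tracking the inductive construction of $U_G$ in Theorem \ref{arb_p_grp}. The base case $n=1$ is Proposition \ref{cycl_p_univ_model}: $U_G^G = k[y^p - y]$. For the inductive step, fix a central $Z = \langle g_0\rangle \cong C_p$ of $G$ and, following Lemma \ref{tryout}, write $U_G = U_{G/Z}[\tilde y_0]$ as a polynomial extension, where $U_{G/Z}$ is the polynomial standard subalgebra furnished by the inductive construction. By the inductive hypothesis, $U_{G/Z}^{G/Z} = k[\sigma_1, \ldots, \sigma_{n-1}]$ is polynomial in $n-1$ variables.

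First I compute $U_G^Z$. Specialising $\tilde y_0 g = \tilde y_0 - \sum_r e_{r,g}(w'rg)$ from Lemma \ref{tryout}(ii) to $g = g_0$ gives $e_{r,g_0} = 1$ and $w'rg_0 = w'r$ (as $Z$ acts trivially on $U_{G/Z}$), so $\tilde y_0 g_0 = \tilde y_0 - \tr^{(G/Z)}(w') = \tilde y_0 - 1$. Thus the $Z$-action on $U_G = U_{G/Z}[\tilde y_0]$ fixes $U_{G/Z}$ and shifts $\tilde y_0$ by $-1$, so by the Artin--Schreier computation over the base $U_{G/Z}$,
\[
U_G^Z = U_{G/Z}[\xi_0], \qquad \xi_0 := \tilde y_0^p - \tilde y_0.
\]

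Next I study the induced $G/Z$-action on $U_G^Z$ and extract invariants. Setting $\gamma_g := \sum_r e_{r,g}(w'rg) \in U_{G/Z}$, so $\tilde y_0 g = \tilde y_0 - \gamma_g$, the associativity $\tilde y_0(gh) = (\tilde y_0 g)h$ makes $g \mapsto \gamma_g$ a $1$-cocycle with values in $U_{G/Z}$. Since $G$ acts by $k$-algebra maps, Frobenius commutes with the $G$-action, and $\beta_g := \gamma_g^p - \gamma_g$ is again a $1$-cocycle, with $\xi_0 g = \xi_0 - \beta_g$. A direct check using $\gamma_{g_0^i} = i$ yields $\gamma_{gg_0^i} = \gamma_g + i$, whence $\beta_{gg_0^i} = \beta_g$, and $\beta$ descends to a $1$-cocycle $G/Z \to U_{G/Z}$. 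The crux is now that $U_{G/Z}$ is trace-surjective as a $G/Z$-algebra, hence a free $k[G/Z]$-module by Lemma \ref{tr}, hence cohomologically trivial; in particular $H^1(G/Z, U_{G/Z}) = 0$. Thus $\beta$ is a coboundary: there is $\gamma_0 \in U_{G/Z}$ with $\beta_g = \gamma_0 - \gamma_0 \bar g$ for all $g \in G$. Setting $\sigma_0 := \xi_0 - \gamma_0$, a direct calculation gives $\sigma_0 g = \sigma_0$ for every $g \in G$, so $\sigma_0 \in U_G^G$.

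Finally, $U_G^Z = U_{G/Z}[\xi_0] = U_{G/Z}[\sigma_0]$ is a polynomial extension (the change of generator is affine in $\xi_0$), and on this ring $G/Z$ acts through its action on $U_{G/Z}$ while fixing $\sigma_0$. Therefore
\[
U_G^G = (U_G^Z)^{G/Z} = U_{G/Z}^{G/Z}[\sigma_0] = k[\sigma_0, \sigma_1, \ldots, \sigma_{n-1}],
\]
a polynomial ring of Krull-dimension $n$. The main obstacle is the cohomological step: one must verify both that $\beta$ is a $1$-cocycle factoring through $G/Z$ (a Frobenius-and-centrality argument) and that freeness of $U_{G/Z}$ over $k[G/Z]$ kills $H^1(G/Z, U_{G/Z})$; once $\gamma_0$ is produced, constructing the final polynomial generator $\sigma_0$ is essentially automatic.
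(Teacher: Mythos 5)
Your proof is correct, and its core is the same as the paper's: induction along a central $Z\cong C_p$, the cocycle $\gamma_g$ read off from equation (\ref{g_tilde_y0}), its Frobenius twist $\beta_g=\gamma_g^p-\gamma_g$ which descends to $G/Z$, and the coboundary equation producing the new invariant (your $\sigma_0=\xi_0-\gamma_0$ is the paper's $\sigma_0=\tilde y_0-\tilde y_0^p+\gamma$ up to sign). You diverge in two places, both legitimately. First, to solve the coboundary equation the paper invokes freeness of $U_{G/Z}$ as a rank-one module over the twisted group ring $U_{G/Z}^{G/Z}[G/Z]$ (Theorem \ref{first_main}), whereas you use that $U_{G/Z}$ is a free $k[G/Z]$-module (Lemma \ref{tr}), hence cohomologically trivial; this is sound, since cohomology of a finite group commutes with arbitrary direct sums, so $H^1(G/Z,U_{G/Z})=0$ indeed follows. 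Second, and more substantively, your end-game differs: the paper sandwiches $T=k[\sigma_0,\ldots,\sigma_{n-1}]\subseteq U_G^G\subseteq U_G$ and compares ranks ($U_G$ is free of rank $p^n$ over both $T$ and $U_G^G$, by the induction hypothesis and Theorem \ref{first_main}), while you compute invariants in stages, $U_G^G=(U_G^Z)^{G/Z}$, via $U_G^Z=U_{G/Z}[\tilde y_0^p-\tilde y_0]$ followed by the coefficient-wise $G/Z$-action on $U_{G/Z}[\sigma_0]$. Your route gives a cleaner identification (no rank count, and it makes Remark \ref{UG_is_poly_rem}(i) explicit rather than a by-product), at the price of the relative Artin--Schreier fact $U_{G/Z}[\tilde y_0]^Z=U_{G/Z}[\tilde y_0^p-\tilde y_0]$, which you assert rather than prove; it does hold, because the trace identities of Lemma \ref{cycl_p_lemma} have coefficients in $\mathbb{F}_p$ and so are valid over any base of characteristic $p$ on which $Z$ acts trivially, and one argues exactly as in Proposition \ref{cycl_p_univ_model}: for $Z$-invariant $f$ one has $f=\tr^{(Z)}(-\tilde y_0^{p-1}f)$, and expanding $-\tilde y_0^{p-1}f$ in the basis $1,\tilde y_0,\ldots,\tilde y_0^{p-1}$ over $U_{G/Z}[\tilde y_0^p-\tilde y_0]$ shows $f$ lies in that subring. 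With that step filled in, your argument is complete and correct for all $p$, including $p=2$.
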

\begin{proof}
We use the notation of Lemma \ref{tryout} and its proof. The proof is by induction on $n$, based on Proposition \ref{cycl_p_univ_model}, for $n=1$.\\
For the induction step, assume $n>1$ and the result to be true for groups of order $p^{n-1}$.
By the induction hypothesis we have standard subalgebra, which is a polynomial algebra,
$$U_{G/Z}\cong k[\tilde y_1,\cdots,\tilde y_{n-1}]=k[w'^{G/Z}]\le D_k(G/Z),$$
with reflexive point $w'\in D_k(G/Z)\subseteq D_k(G)^Z$, such that
$U_{G/Z}^{G/Z}=k[\sigma_1,\cdots,\sigma_{n-1}]$ is a polynomial ring. By Lemma \ref{tryout} we
can extend this to a standard subalgebra,
$$U_G=k[\tilde y_0,\tilde y_1,\cdots,\tilde y_{n-1}]=k[\tilde w^G],$$
which is a polynomial algebra (see Theorem \ref{arb_p_grp})
with reflexive point $\tilde w=-\tilde y_0^{p-1}w'\in D_k(G)$ and $G$ action given by
equation (\ref{g_tilde_y0}). In particular $\tilde y_0g_0=\tilde y_0-1$; recall also that
$rh=g_0^{e_{r,h}}r_{r,h}$ $\forall r\in \mathcal{R}$ and $h\in G$,
with $e_{r,g_0}=1$, $r_{r,g_0}=r$ and $G=\cup_{r\in \mathcal{R}} \langle g_0\rangle r$.
In particular $rhg_0=g_0^{e_{r,h}+1}r_{r,h}$, hence $e_{r,hg_0}=e_{r,h}+1$.
Let $\tilde y_0g=\tilde y_0-\alpha(g)$, with $\alpha(g):=\sum_{r\in \mathcal{R}} e_{r,g}w'rg\in U_{G/Z}\le
U_G$, then
$$\beta(g):=\alpha(g)-\alpha(g)^p=\sum_{r\in \mathcal{R}} e_{r,g}(w'-w'^p)rg,$$
since $e_{r,g}\in \mathbb{F}_p$. Hence
$$\beta(gg_0)-\beta(g)=\sum_{r\in \mathcal{R}} (w'-w'^p)rg=\tr^{(G/Z)}(w'-w'^p)=1-1=0.$$
Therefore $\beta(\overline g):=\beta(g)\in U_{G/Z}$ is well-defined for $\overline g=gZ\in G/Z$.
Now $\tilde y_0^pg=\tilde y_0^p-\alpha(g)^p$, so
$(\tilde y_0-\tilde y_0^p)g=(\tilde y_0-\tilde y_0^p)-\beta(g)$, hence
$\beta(g_1g_2)=\beta(g_1){g_2}+\beta(g_2)$ for all $g_1,g_2\in G$ and therefore
$$\beta(\overline{g_1}\overline{g_2})=\beta(\overline{g_1}){\overline{g_2}}+\beta(\overline{g_2}),\
\forall \overline{g_i}\in G/Z$$
with $\beta(\overline{g})\in U_{G/Z}$. So $\beta:\ G/Z\to U_{G/Z}$ is a $1$-cocycle.
Since $U_{G/Z}$ is a free $U_{G/Z}^{G/Z}[G/Z]$-module of rank one (see Theorem \ref{first_main}),
$\beta$ is a co-boundary, hence
there is $\gamma\in U_{G/Z}$ with
\begin{equation}\label{cohomology_equation}
\gamma{\overline g}-\gamma=\beta(\overline{g})\ \forall \overline{g}\in G/Z,
\end{equation}
and so $\gamma g-\gamma=\beta(g)$ for all $g\in G$.
It follows that
$(\tilde y_0-\tilde y_0^p)g=$ $(\tilde y_0-\tilde y_0^p)-$ $(\gamma g-\gamma)$ for all $g\in G$,
therefore $\sigma_0:=\tilde y_0-\tilde y_0^p+\gamma\in U_G^G$ with $\gamma\in U_{G/Z}$.
Thus
$$U_G=k[\tilde y_0,\tilde y_1,\cdots,\tilde y_{n-1}]\ge U_G^G\ge T\le S$$
with
$T:=k[\sigma_0,\sigma_1,\cdots,\sigma_{n-1}]$ and
$S:=k[\sigma_0,\tilde y_1,\cdots,\tilde y_{n-1}]=k[\tilde y_0-\tilde y_0^p,\tilde y_1,\cdots,\tilde y_{n-1}]$,
since $\gamma\in U_{G/Z}$.
By the induction hypothesis, the ring $S$ is a free module of rank $|G/Z|=p^{n-1}$ over the polynomial ring $T$
and $U_G$ is visibly free of rank $p$ over $S$, hence
$U_G$ is a free module of rank $p^n$ over $T$. On the other hand we know from Theorem \ref{first_main},
that $U_G$ is a free module of rank $|G|=p^n$ over $U_G^G$, hence it follows easily
that $U_G^G=T$, as required.
\end{proof}

\begin{rem}\label{UG_is_poly_rem}
\begin{enumerate}
\item The proof of Theorem \ref{UG_is_poly} shows that
$U_G=k[\tilde y_0,\tilde y_1,\cdots,\tilde y_{n-1}]$ is free of rank $p$ over
$U_G^Z=k[\tilde y_0-\tilde y_0^p,\tilde y_1,\cdots,\tilde y_{n-1}]$, which itself
is free of rank $p^{n-1}$ over $U_G^G=(U_G^Z)^{G/Z}=k[\sigma_0,\sigma_1,\cdots,\sigma_{n-1}]$.
Thus $U_G$ is free over $U_G^G$ with basis consisting of monomials
$\tilde y_0^{i_0} \tilde y_1^{i_1}\cdots \tilde y_{n-1}^{i_{n-1}}$,
$0\le i_0, i_1,\cdots, i_{n-1}<p$.
\item The generating invariants $\sigma_i$ can be determined recursively, along
an upper central series $1<Z:=Z_1<Z_2<\cdots<Z_n=G$ with $Z_{i+1}/Z_i\le Z(G/Z_i)$,
by solving the cohomology-equations (\ref{cohomology_equation}) and using the formulae
\begin{equation}\label{gens_of_U}
\sigma_i=\tilde y_i-\tilde y_i^p+\gamma_i(\tilde y_{i+1},\cdots,\tilde y_{n-1})\ {\rm for}\ i=0,1,\cdots,n-1.
\end{equation}
\item It follows from equation (\ref{g_tilde_y0}) and an obvious induction argument, that $U_G$
can be constructed in such a way that $G$ acts in a ``triangular" way of the form
$(\tilde y_i)g=\tilde y_i+f_i(\tilde y_{i+1},\cdots,\tilde y_{n-1})$ for $i=0,\cdots,n-1$.
\end{enumerate}
\end{rem}

As an application of Theorem \ref{UG_is_poly}, we obtain the Structure Theorem
\ref{strct_thm_intro}. We need two little lemmas:

\begin{lemma}\label{struct_thm_lemma1}
Let $R,T\le \Omega$ and $S\le T\cap R$ be commutative rings and $I\unlhd R$ an ideal.
Then $$R/I\otimes_S T\cong R[T]/(I).$$
\end{lemma}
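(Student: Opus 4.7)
The plan is to produce inverse $S$-algebra homomorphisms between $R/I \otimes_S T$ and $R[T]/(I)$, using only the universal property of the tensor product and the right-exactness of $-\otimes_S T$.

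First I would construct the forward map $\phi\colon R/I \otimes_S T \to R[T]/(I)$. The set map $R \times T \to R[T]/(I)$, $(r,t) \mapsto \overline{rt}$, is $S$-bilinear and $S$-balanced (multiplication in $\Omega$ is commutative and $S \subseteq R \cap T$), and it kills $I \times T$ since $r \in I$ forces $rt \in (I)$. By the universal property of the tensor product it descends to an $S$-algebra homomorphism $\phi$, which is automatically surjective because $R[T]/(I)$ is generated as a ring by the images of $R$ and $T$.

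For the inverse direction I would apply $-\otimes_S T$ to the short exact sequence $0 \to I \to R \to R/I \to 0$. Right-exactness gives
\[
R/I \otimes_S T \;\cong\; (R\otimes_S T)\big/\,J,
\]
where $J \unlhd R\otimes_S T$ is the image of $I \otimes_S T$, namely the ideal generated by $\{\,i \otimes 1 : i \in I\,\}$. The natural multiplication map $\mu\colon R \otimes_S T \to \Omega$, $r \otimes t \mapsto rt$, has image equal to the compositum $R[T]$ and carries $J$ onto the ideal $(I) \unlhd R[T]$. Hence $\mu$ induces a surjection $(R\otimes_S T)/J \to R[T]/(I)$, which is the two-sided inverse of $\phi$ provided $\mu$ itself is an isomorphism.

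The key step---and the main obstacle---is therefore to verify that $\mu$ is injective, equivalently that $R$ and $T$ are linearly disjoint over $S$ inside $\Omega$. In the situations where the lemma is applied this should follow from the concrete structure at hand (typically one of the factors being free as a module over $S$, with a basis that remains $\Omega$-linearly independent over the other factor); once this is in place the lemma reduces to a purely formal consequence of right-exactness.
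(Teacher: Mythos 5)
Your argument is sound as far as it goes, and the ``obstacle'' you isolate at the end is not a defect of your write-up but of the lemma itself: as stated, with no hypothesis beyond $R,T\le \Omega$, $S\le T\cap R$, the claim is \emph{false}. Take $\Omega=R=T=k[x]$, $S=k$, $I=(x)$: then $R/I\otimes_S T\cong k\otimes_k k[x]\cong k[x]$, while $R[T]/(I)=k[x]/(x)\cong k$; with $I=0$ one likewise gets $k[x]\otimes_k k[x]$ (Krull dimension $2$) against $R[T]=k[x]$ (dimension $1$). So no argument from the stated hypotheses can close the gap you identified: injectivity of the multiplication map $\mu\colon R\otimes_S T\to R[T]\subseteq\Omega$, i.e.\ linear disjointness of $R$ and $T$ over $S$ inside $\Omega$, has to be assumed (or deduced from an added hypothesis such as $T$ being free over $S$ with a basis that stays $R$-linearly independent in $\Omega$). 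Granting it, your two steps --- the surjection $\phi$ from the universal property, and right-exactness of $-\otimes_S T$ applied to $0\to I\to R\to R/I\to 0$ to identify $R/I\otimes_S T$ with $(R\otimes_S T)/J$ and then, via $\mu$, with $R[T]/(I)$ --- constitute a complete proof.

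The paper's own proof is one sentence: ``Both sides form the coproduct $R/I\coprod_S T$'' in commutative $S$-algebras. For the tensor product this is the standard universal property; but the assertion that $R[T]/(I)$ is the coproduct is precisely the assertion that every pair of $S$-algebra maps $R/I\to C$, $T\to C$ admits a (necessarily unique, since $R$ and $T$ generate) common extension $R[T]/(I)\to C$, and that is \emph{equivalent} to the linear disjointness you flagged --- it fails in the counterexample above. So the paper's categorical one-liner conceals exactly the gap your explicit route exposes. The gap is harmless in the only place the lemma is used, namely the proof of Theorem \ref{strct_thm_intro}: there the lemma's $T$ is $U_G=k[\underline Y]$, which by Remark \ref{UG_is_poly_rem} (i) is free over $S=U_G^G=k[\sigma_0,\cdots,\sigma_{n-1}]$ on the monomials $\tilde y^I$, $I\le \underline{p-1}$; the lemma's $R$ is $R[\sigma_0,\cdots,\sigma_{n-1}]$ and $\Omega=R[Y_0,\cdots,Y_{n-1}]$, and base change of the freeness gives $\Omega=\oplus_{I\le\underline{p-1}}R[\sigma_0,\cdots,\sigma_{n-1}]\,\tilde y^I$, so $\mu$ is an isomorphism there. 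In short: your proposal is the honest version of the proof, and it should be completed by adding the linear-disjointness hypothesis to the statement --- a hypothesis the paper's proof also needs but never mentions.
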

\begin{proof}
Both sides form the coproduct $R/I\coprod_S T$.
\end{proof}

\begin{lemma}\label{struct_thm_lemma2}
Let $R$ be a ring with modules $M, N$ and ideal
$I:=({\rm ann}_RM,{\rm ann}_RN)\unlhd R$.
Set $\overline R:= R/I$,
$\overline M:=M/{\rm ann}_RN\cdot M$ and $\overline N:=N/{\rm ann}_RM\cdot N$, then
there is an isomorphism of $R$- or $\overline R$-modules
$$M\otimes_R N\cong \overline M \otimes_{\overline R} \overline N.$$
In particular, if $\theta:\ R\to A$, $\phi:\ R\to B$ are
ring homomorphisms, then for $\overline A:=A/\theta(\ker(\phi))A$,
$\overline B:=B/\phi(\ker(\theta))B$ and $\overline R:= R/(\ker(\theta),\ker(\phi))$, there is a
canonical isomorphism of $R$- or $\overline R$-algebras
$$A\otimes_R B\cong \overline A\otimes_{\overline R} \overline B.$$
\end{lemma}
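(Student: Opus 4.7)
The plan is to establish the module isomorphism first by successive ``absorption of annihilators'' in the tensor product, and then to derive the algebra statement as a formal consequence.

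\emph{Step 1 (annihilator absorption).} For any $a\in {\rm ann}_RN$, $m\in M$ and $n\in N$, the relation $(am)\otimes n=m\otimes(an)=m\otimes 0=0$ holds in $M\otimes_RN$. Hence ${\rm ann}_RN\cdot M$ lies in the kernel of the canonical map $M\otimes_RN\to\overline M\otimes_RN$, which shows
\[
M\otimes_RN\cong \overline M\otimes_RN.
\]
The same argument, applied on the right with ${\rm ann}_RM$, gives $\overline M\otimes_RN\cong\overline M\otimes_R\overline N$. (Here one just checks that the standard short exact sequence $0\to\mathrm{ann}_RN\cdot M\to M\to \overline M\to 0$ stays exact on the right after $-\otimes_R N$, and the image of $\mathrm{ann}_RN\cdot M\otimes_RN\to M\otimes_RN$ is zero by the above.)

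\emph{Step 2 (base change).} Both ${\rm ann}_RM$ and ${\rm ann}_RN$ annihilate $\overline M$ and $\overline N$: indeed ${\rm ann}_RM$ kills all of $M$, hence $\overline M$, while ${\rm ann}_RN\cdot \overline M=0$ by construction (and symmetrically for $\overline N$). Therefore $I=({\rm ann}_RM,{\rm ann}_RN)$ annihilates both, so $\overline M$ and $\overline N$ carry canonical $\overline R$-module structures. Since the defining relation of $\otimes_R$ is $rm\otimes n=m\otimes rn$ and $R\twoheadrightarrow\overline R$ is surjective, this coincides with the defining relation of $\otimes_{\overline R}$, giving
\[
\overline M\otimes_R\overline N\;\cong\;\overline M\otimes_{\overline R}\overline N.
\]
Concatenating with Step 1 yields the first assertion.

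\emph{Step 3 (algebra version).} Regard $A$ and $B$ as $R$-modules via $\theta$ and $\phi$ respectively. Because $A$ and $B$ are unital, ${\rm ann}_RA=\ker(\theta)$ and ${\rm ann}_RB=\ker(\phi)$, so $I=(\ker(\theta),\ker(\phi))$ and
\[
{\rm ann}_RB\cdot A=\theta(\ker(\phi))A,\qquad {\rm ann}_RA\cdot B=\phi(\ker(\theta))B.
\]
Thus $\overline A$ and $\overline B$ of the statement coincide with the $\overline M,\overline N$ of the first part, and applying Steps 1--2 produces an isomorphism of $\overline R$-modules $A\otimes_RB\cong\overline A\otimes_{\overline R}\overline B$. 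One checks it respects products: on elementary tensors the map is $a\otimes b\mapsto\bar a\otimes\bar b$, which is multiplicative by construction, so it is an $\overline R$-algebra isomorphism.

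The argument is essentially formal; the only point requiring minor care is the verification in Step 2 that $\overline M$ and $\overline N$ are indeed $\overline R$-modules (so that the base-change identification is legitimate), which is where both generators of $I$ are needed simultaneously.
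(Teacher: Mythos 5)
Your proof is correct, and it takes a somewhat different route from the paper's. The paper proves the module isomorphism by exhibiting an explicit two-sided inverse: it defines $\Psi:\ \overline M\times \overline N\to M\otimes_R N$, $(\bar m,\bar n)\mapsto m\otimes_R n$, checks well-definedness by exactly the computation you use in Step 1 (writing $n-n'=\sum_i r_i n_i$ with $r_i\in{\rm ann}_R M$ and moving the $r_i$ across the tensor sign), and then observes that the induced map on $\overline M\otimes_{\overline R}\overline N$ is a two-sided inverse of the obvious surjection. You instead chain three canonical isomorphisms,
$$M\otimes_R N\cong \overline M\otimes_R N\cong \overline M\otimes_R\overline N\cong\overline M\otimes_{\overline R}\overline N,$$
the first two via right exactness of $-\otimes_R N$ together with the vanishing of the relevant images, and the third by base change along $R\twoheadrightarrow\overline R$. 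The key identity $(am)\otimes n=m\otimes(an)=0$ is the same in both arguments, so the difference is organizational rather than conceptual; still, your version is more complete in two respects: you justify the identification $\overline M\otimes_R\overline N=\overline M\otimes_{\overline R}\overline N$, which the paper asserts as an unexplained equality inside its proof, and you actually derive the ``in particular'' algebra statement (identifying ${\rm ann}_RA=\ker(\theta)$, ${\rm ann}_RB=\ker(\phi)$ and checking multiplicativity of the isomorphism), which the paper's proof does not address at all. What the paper's approach buys in exchange is an explicit inverse map, so that injectivity of the canonical surjection is seen directly without any appeal to right exactness or a kernel analysis.
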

\begin{proof}
Consider the map
$$\Psi:\ \overline M\times \overline N\to M\otimes_R N,
(\bar m,\bar n)\mapsto m\otimes_R n.$$
If $\bar n=\bar n'$, then
$n-n'=\sum_{r_i} r_in_i$ with $r_i\in {\rm ann}_RM$, so
$m\otimes_R n-m\otimes_R n'=$
$$\sum_{r_i} m\otimes_Rr_in_i=\sum_{r_i} mr_i\otimes_Rn_i=0.$$
Similarly $\bar m=\bar m'$ implies $m\otimes_R n=m'\otimes_R n$,
so the map $\Psi$ is well-defined and clearly $R$-balanced.
It therefore induces a homomorphism
$$\overline M\otimes_R \overline N=
\overline M\otimes_{\overline R} \overline N\to M\otimes_R N,$$
which is easily seen to be a two sided inverse of the obvious
homomorphism
$$M\otimes_R N\to \overline M\otimes_{\overline R} \overline N.$$
\end{proof}

We now prove the second of our main theorems mentioned in the introduction, using
the notation thereof:
\\
{\bf Proof of Theorem \ref{strct_thm_intro}}:\
Since $U_G$ is a standard subalgebra, it is ``universal" (see the comment before Proposition
\ref{D_k_gen_1}). Hence there is a $G$-epimorphism
$\phi:\ U_G\to B$ onto a trace-surjective subalgebra $B\le A$, hence $B\cong U_G/\mathfrak{i}$
with $B^G=\phi(U_G^G)=k[f_0,\cdots,f_{n-1}]$ and $f_i:=\phi(\sigma_i)\in R$, where
$U_G^G=k[\sigma_0,\cdots,\sigma_{n-1}]$.
Since by Theorem \ref{first_main}, $\ker(\phi)=$ $\mathfrak{i}=$ $\mathfrak{i}^GU_G=$ $(\ker(\phi_{|U_G^G}))U_G$, Lemma \ref{struct_thm_lemma2} gives
$$A\cong R\otimes_{\phi(U_G^G)}\phi(U_G)\cong R\otimes_{U_G^G}\phi(U_G)=
R\otimes_{U_G^G}U_G=$$
$$R\otimes_{k[\sigma_0,\cdots,\sigma_{n-1}]}k[\underline Y]\cong
R[\sigma_0,\cdots,\sigma_{n-1}]/(\sigma_0-f_0,\cdots,\sigma_{n-1}-f_{n-1})\otimes_{k[\sigma_0,\cdots,\sigma_{n-1}]}
k[\underline Y],$$
with all isomorphisms being $G$-equivariant. By Lemma \ref{struct_thm_lemma1}, this latter tensor product is isomorphic to
$$R[Y_0,\cdots,Y_{n-1}]/(\sigma_0(\underline Y)-f_0,\cdots,\sigma_{n-1}(\underline Y)-f_{n-1}).$$
$\Box$

We now prove the third main theorem of the introduction:
\\
{\bf Proof of Theorem \ref{D_k_G is poly_intro}}:\
Let $G$ be a group of order $p^n$. The proof is by induction on $n$, where the
induction base $n=1$ is provided by Corollary \ref{D_k_poly_for_Cp}.
We use the notation of Lemma \ref{tryout} and Theorem \ref{UG_is_poly} and their proofs.
For $I:=(i_0,\cdots,i_{n-1}), J:=(j_0,\cdots,j_{n-1})$, we write $I\le J$ if all entries of
$J-I$ are non-negative and we write $\underline{m}$ for $(m,m,\cdots,m)$.
For $I$ and $I':=(0,i_1,...,i_{n-1})$ we define $\tilde y^I$ and $\tilde y^{I'}$
in the obvious way. We will also abuse notation slightly by writing
$(i_0,I')$ for $I$.\\
Let $B:=D_k(G)^G$ and $C:=D_k(G/Z)^{G/Z}\le B$ (see equation (\ref{D_k_G_Z})); then Proposition \ref{D_k_gen_1} yields
that $D_k(G)\cong B\otimes_{U_G^G} U_G$ and from Remark \ref{UG_is_poly_rem} (i) we see that
$$D_k(G)=\oplus_{I\le \underline{p-1}} B \tilde y^I\ {\rm and}\ D_k(G/Z)=\oplus_{I'\le \underline{p-1}} C \tilde y^{I'}.$$
Thus we can decompose $x_e\in D_k(G)$ as
$$x_e=\sum_{I\le\underline{p-1}} b_I\tilde y^I,\ b_I\in B.$$
Set $\mathcal{X}:=k[\sigma_0,C, b_I\ |\ I\le \underline{p-1}]\subseteq B$ and
$S:=\oplus_{I\le \underline{p-1}} \mathcal{X}\tilde y^I$. It follows from Remark \ref{UG_is_poly_rem} (ii)
that $\sigma_1,\cdots,\sigma_{n-1}\in C$, so $\sigma_0,\cdots,\sigma_{n-1}\in \mathcal{X}$ and
therefore powers of $\tilde y_i^s$ with $s\ge p$ can be reduced in $S$, showing that
$S=k[\mathcal{X},U]$ is a $G$-stable $k$-subalgebra of $D_k(G)$ containing $x_e$.
Hence $x_g\in S$ for all $g\in G$, therefore $D_k(G)=S$ and $\mathcal{X}=B$.
By the induction hypothesis, $C$ is a polynomial ring. It remains to show that an
appropriate number of $b_I$ are redundant generators of $B$.
\\
Consider $\tr^{(Z)}(\tilde y^I)=\tr^{(Z)}(\tilde y_0^{i_0})\tilde y^{I'}$;
then by Lemma \ref{cycl_p_lemma}, $\tr^{(Z)}(\tilde y_0 x_e)=$
$$\sum_{(i_0,I')\le \underline{p-1}} b_I \tr^{(Z)}(\tilde y_0^{i_0+1})\tilde y^{I'}=
-\sum_{I'\le \underline{p-1}} b_{p-2,I'}\tilde y^{I'}$$
if $p>2$ and
$\tr^{(Z)}(\tilde y_0 x_e)=$
$\sum_{I'\le \underline{p-1}} (b_{0,I'}+b_{1,I'})\tilde y^{I'}$
if $p=2$. On the other hand
$\tr^{(Z)}(\tilde y_0 x_e)=\tilde y_0\eta_1-\tilde x$ with
$\tilde x=x_{g_0}+2x_{g_0^2}+ \cdots=$ $\sum_{\ell=0}^{p-1}\ell x_{g_0^\ell}=$
$$\sum_{I=(i_0,I')\le \underline{p-1}}b_I(\sum_{\ell=0}^{p-1}\ell (\tilde y_0^{i_0}) {g_0}^\ell)
\tilde y^{I'}.$$
We have from Lemma \ref{cycl_p_lemma},
$$\sum_{\ell=0}^{p-1} \ell (\tilde y_0^{p-1}){{g_0}^\ell}=
\sum_{\ell=0}^{p-1} \ell (\tilde y_0-\ell)^{p-1}=-\tilde y_0\ (+1),$$
where the term ``($+1$)" only appear if $p=2$. Similarly
$$\sum_{\ell=0}^{p-1} \ell (\tilde y_0^{p-2}) {{g_0}^\ell}=
\sum_{\ell=0}^{p-1} \ell\ (\tilde y_0-\ell)^{p-2}=1$$ and
$\sum_{\ell=0}^{p-1} \ell (\tilde y_0^{i_0}) {{g_0}^\ell}=0$ for
$0\le i_0<p-2$. Therefore
$\tilde x=\sum_{I'}\big(b_{p-2,I'}-b_{p-1,I'}\tilde y_0\big)\tilde y^{I'}$ if
$p>2$ and $\tilde x=\sum_{I'}\big(b_{p-2,I'}-b_{p-1,I'}(\tilde y_0 +1)\big)\tilde y^{I'}$
if $p=2$.
Since $\eta_1=\sum_{I'}c_{I'}\tilde y^{I'}$ with $c_{I'}\in C$, we
get for $p>2$:
$$\tr^{(Z)}(\tilde y_0 x_e)=-\sum_{I'\le \underline{p-1}} b_{p-2,I'}\tilde y^{I'}=
\sum_{I'\le \underline{p-1}}
\big[\big(-b_{p-2,I'} \big)\tilde y^{I'} +
(c_{I'}+b_{p-1,I'})\tilde y_0\tilde y^{I'}\big],$$
while for $p=2$ we get:
$$\tr^{(Z)}(\tilde y_0 x_e)=\sum_{I'\le \underline{p-1}} (b_{0,I'}+b_{1,I'})\tilde y^{I'}=
\sum_{I'\le \underline{p-1}}
\big[\big(b_{0,I'}+b_{1,I'}\big)\tilde y^{I'} +
(c_{I'}+b_{p-1,I'})\tilde y_0\tilde y^{I'}\big].$$
Hence $b_{p-1,I'}=-c_{I'}\in C$. Moreover, since
$t_{\mathcal{R}}(\tilde y^{I'})=(-1)^{n-1}$ for\\
$I'=(0,p-1,\cdots,p-1)$ and zero for $I'\le (0,p-1,\cdots,p-1)$
and $I'\ne (0,p-1,\cdots,p-1)$, we get
$t:={\rm tr}(\tilde y_0 x_e)=$
$$-\sum_{I'\le \underline{p-1}} b_{p-2,I'}t_{\mathcal{R}}(\tilde y^{I'})=
(-1)^nb_{p-2,\underline{p-1}},$$
if $p>2$ and $t=b_{0,\underline{p-1}}+b_{1,\underline{p-1}}$ if $p=2$.
On the other hand, by Lemma \ref{tryout}, $\tilde y_0 g\in
k[\tilde y_0, D_k(G/Z)]=k[y_0, D_k(G/Z)]$
and $\eta_1\in D_k(G/Z)$, hence
$t=t_{\mathcal{R}}(\tilde y_0 \eta_1)-t_{\mathcal{R}}(\tilde x)=$
$t_{\mathcal{R}}(\tilde y_0 \eta_1)-y_0\in k[\tilde y_0, D_k(G/Z)]$
(recall that $y_0=\tilde y_0-\mu$ for some $\mu\in D_k(G/Z)$.)
Using the relation $$\tilde y_0^p=\tilde y_0+\gamma_0(\tilde y_1,\cdots,\tilde y_{n-1})-\sigma_0$$
we get
$$t=\sum_{I\le \underline{p-1}} \epsilon_I \tilde y^I\in
(\oplus_{I\le \underline{p-1}} C[\sigma_0]\tilde y^I)\cap B=C[\sigma_0].$$
If $p>2$ we conclude that $t=(-1)^n b_{p-2,\underline{p-1}}\in C[\sigma_0]$
and if $p=2$ we get $b_{0,\underline{p-1}}+b_{1,\underline{p-1}}\in C[\sigma_0]$,
so $b_{0,\underline{p-1}}\in C[\sigma_0]$ as well.
It follows that
$$B=\mathcal{X}=k[\sigma_0,C,\{b_{i_0,I'}\ |\ 0\le i_0< p-1\}\backslash \{b_{p-2,\underline{p-1}}\}].$$
Since $C$ is generated by $p^{n-1}-1$ elements, the number of
generators for $B$ is
$$1+ (p^{n-1}-1)+(p^n-p^{n-1}-1)=p^n-1.$$
Hence the result follows, since $B=D_k(G)^G$ has Krull-dimension $p^n-1$.
$\Box$

\section{Concluding remarks}\label{concl_rem}

In the light of the Structure Theorem \ref{strct_thm_intro},
it is interesting and also challenging to construct
explicit standard polynomial algebras $U\le D_k$ of Krull-dimension
$\mathrm{log}_p(|G|)$ and their rings of invariants, for particular
classes of $p$-groups. More explicit versions of the Structure Theorem
\ref{strct_thm_intro} will depend on the finer structure of the $p$-groups
considered. The authors have started this investigation for example for
cyclic groups of order larger than $p$, general abelian $p$-groups and
and extraspecial $p$-groups. The results will appear in subsequent papers.
\\
In the context of Theorem \ref{arb_p_grp} it would be interesting to classify all standard
subalgebras of $D_k$ and particularly to check whether
they exist in Krull-dimension $<\mathrm{log}_p(|G|)$. In the case $k=\mathbb{F}_p$, such a
standard subalgebra cannot be polynomial, due to Proposition \ref{minim_standard}.
Therefore, if it exists, it would provide a counterexample to the conjecture discussed
in \cite{costa}, that all retracts of polynomial rings are again polynomial.\\
In this context we are also interested in the structure of the category $\mathfrak{Ts}$ for
a general finite $p$-group. Here are some question for which we only have partial answers:

\begin{itemize}
\item[-] We know that $A\in \mathfrak{Ts}$ is a finitely generated projective object if and only
if $A\oplus J=D_k^{\otimes m}$ for some $m$ with \emph{ideal} $J$,
i.e. $A$ is a \emph{retract}  of $D_k^{\otimes m}$.
\item[$\bullet$]  We don't know if all of these are polynomial rings. This is related to
the aforementioned general problem in algebra, whether every retract of a
polynomial ring is again a polynomial ring (see \cite{costa}, \cite{shpilrain}).
\item[-] We know that $U_k$ is a projective object in $\mathfrak{Ts}$; moreover if
$k=\mathbb{F}_p$, then $n=\mathrm{log}_p(|G|)$ is the minimal possible
Krull-dimension for such an object being a polynomial ring.
\item[$\bullet$] We don't know what the projective objects of minimal dimension
in $\mathfrak{Ts}$ are in general.
\end{itemize}
A further line of research opens up, if the finite $p$-group $G$ is replaced
by an arbitrary finite group. As mentioned before, some of our results
have variants in that general context, but the theory needs further development.
It should also be mentioned that some aspects of the presented theory make perfect
sense and promise interesting results in invariant theory and representation theory,
if the commutative $k$-algebra $A$ is replaced
by a non-commutative $k-G$-algebra with surjective trace.

\bibliography{bibl}
\bibliographystyle{plain}

\affiliationone{
   P. Fleischmann and C.F. Woodcock\\
   School of Mathematics, Statistics\\
   and Actuarial Science\\
   University of Kent\\
   Canterbury CT2 7NF\\
   United Kingdom
   \email{P.Fleischmann@kent.ac.uk\\
   C.F.Wooodcock@kent.ac.uk}}
\end{document}